\numberwithin{equation}{section}
\newtheorem{prop}{Proposition}
\newtheorem{ques}[prop]{Question}
\newtheorem{theo}[prop]{Theorem}
\newtheorem{lemm}[prop]{Lemma}
\newtheorem{rema}[prop]{Remark}
 \DeclareMathOperator{\Tr}{Tr}
 \DeclareMathOperator{\vol}{vol}
\DeclareMathOperator{\diam}{diam}
\def\g{\mathfrak g}
\def\h{{\mathfrak h}}
\def\cF{{\mathcal F}}
\def\cN{{\mathcal N}}
\def\cO{{\mathcal O}}
\def\cP{{\mathcal P}}
\def\cU{{\mathcal U}}
\def\bh{{\mathbf h}}
\def\RR{{\mathbb R}}
\def\and{\quad{\rm and}\quad}
\def\bl{\bigl(}
\def\br{\bigr)}
\let\lra=\longrightarrow
\def\lalp{_{\alpha}}
\def\lep{_{\epsilon}}
\def\mh{\!:\!}
\def\sub{\subset}
\def\sta{^{\ast}}
\def\lab{\ }
\def\lab{\label}
\definecolor{light-pink}{rgb}{1,.90,.90}
\definecolor{light-green}{rgb}{.95,1,.95}
\definecolor{yellow}{rgb}{1,1,0}
\definecolor{orange}{rgb}{1,.7,0}
\definecolor{red}{rgb}{1,0.3,0.9}
\definecolor{white}{rgb}{1,1,1}
\definecolor{A}{rgb}{.75,1,.75}
\title{Limiting Behavior of a Class of  Hermitian Yang-Mills Metrics}
\author{Jixiang Fu}
\address{Institute of Mathematics\\ Fudan University \\ Shanghai 200433\\ China}
\email{majxfu@fudan.edu.cn}
\begin{document}

\maketitle
\begin{abstract}
This paper begins to study the limiting behavior of a family of Hermitian Yang-Mills (HYM for brevity) metrics on a
class of rank two slope stable vector bundles over a product of two
elliptic curves with K\"ahler metrics $\omega_\epsilon$ when $\epsilon\to 0$. Here $\omega_\epsilon$
are flat and have areas $\epsilon$ and $\epsilon^{-1}$ on the two
elliptic curves respectively. A family of Hermitian metrics on the vector bundle are explicitly  constructed and with respect to them, the HYM metrics are normalized.
We then compare the family of normalized HYM metrics with the family of constructed Hermitian metrics by doing estimates. We get the higher order estimates as long as the $C^0$-estimate is provided.
We also get the estimate of the lower bound of the $C^0$-norm.
If the desired estimate of the upper bound of the $C^0$-norm can be obtained, then it would be  shown that these two families of metrics
are close to arbitrary order in $\epsilon$ in any $C^k$ norms.
\end{abstract}

\tableofcontents

\section{Introduction}
A Calabi-Yau manifold is a compact K\"ahler manifold with zero first
Chern class. Yau's solution
\cite{Yau78} to the Calabi conjecture provides a unique Ricci-flat
K\"ahler metric in each K\"ahler class of a Calabi-Yau manifold.
Motivated by mirror symmetry and the Strominger-Yau-Zaslow conjecture \cite{SYZ},
Gross and Wilson \cite{Gr} initiated the study of the limiting behavior of
Yau's Ricci flat metrics in a large complex structure limit.
They showed that a family of Ricci flat metrics on a
general K3 surface, which is a hyper-K\"ahler rotation of an
elliptic K3 surface with 24 singular fibers, converge (collapse) to
a metric on the base $S^2$ with singularities on the discriminant
locus of 24 points. Many other investigations of this topic have
appeared in the literature  \cite{Wi,Zh,LYZ0,To,RZ,GTZ}.

In this paper, we will study the Hermitian Yang-Mills (HYM for
brevity) version of the above problem. Let $V$ be a slope stable
holomorphic vector bundle over a compact K\"ahler manifold $X$ with
a K\"ahler metric (form) $\omega$. According to the Donaldson-Uhlenbeck-Yau
theorem \cite{Do,Do2,UY}, $V$ admits a unique irreducible HYM metric $H$
up to a positive multiplicative constant. Suppose that $X$ is a
Calabi-Yau manifold with a family of K\"ahler metrics
$\omega_\epsilon$ approaching a large K\"ahler metric limit, and
assume that $V$ is slope stable with respect to each
$\omega_\epsilon$. Then we obtain a family of HYM metrics
$H_\epsilon$.

\begin{ques} After normalization, what is the limiting behavior of
$H_\epsilon$ when $\omega_\epsilon$ goes to a large K\"ahler metric
limit?
\end{ques}

This natural question will be studied in detail in this paper for a
specific K\"ahler manifold $X$, i.e.,
 the product $B\times T$
of two copies of the complex one-torus  $\mathbb C/\Gamma$, where
$\Gamma=\mathbb Z+i\mathbb Z$. In this case, a family of product
metrics $\omega_\epsilon$, which are flat and  have areas $\epsilon$
and $\epsilon^{-1}$ on  $T$ and $B$ respectively, approaches  a
large K\"ahler metric limit when $\epsilon\to 0$ (cf. \cite{Le}).

The holomorphic vector bundle $V$ over $X$ considered here is
constructed as follows (cf. \cite{Fr,FMW}). Let $T^\ast$ be the dual
of $T$ and let $X^\ast=T^\ast\times B$.  Let  $Y$ be a compact
(complex) curve of $X^\ast$ such that the induced projection
$\varphi:Y\rightarrow B$ is a two-sheet branched cover with $n$ branched points. Denote the other induced map by $q: Y\to T^\ast$.
Denote
\begin{equation*}\label{171221}
\iota=(q,\textup{id}_{T}):Y\times T \lra T^\ast\times T,
\qquad p_2=(\varphi,\textup{id}_T):Y\times T\lra X
\end{equation*}
and denote by $p_1$ the projection map from $Y\times T$ to $Y$.
Let $\cP$ be the Poincar\'e
line bundle on $T^\ast\times T$.
 Then for any
degree zero line bundle $\cF$ over $Y$, we can form a line bundle over $Y$
\begin{equation*}
\cN=K^{1/2}_Y\otimes \varphi\sta K_B^{-1/2}\otimes
\cF
\end{equation*}
and a rank two vector bundle over $ X$ with zero degree
\begin{equation*}
V=p_{2\ast}(\iota^*\cP\otimes p_1^*\cN).
\end{equation*}
By an adiabatic argument (cf. \cite{FMW}), $V$ is $\omega_\epsilon$-slope stable for
small $\epsilon$. Hence  there exists  a family of irreducible
HYM metrics $H_{1,\epsilon}$ on $V$ with respect to
$\omega_\epsilon$. As a consequence of $c_1(V)=0$,
  the associated curvature forms $\Theta(H_{1,\epsilon})$ satisfy
\begin{equation*}\label{06}
\Lambda_{\omega_\epsilon}\Theta(H_{1,\epsilon})=0.
\end{equation*}
The definition of the trace operator $\Lambda_{\omega_\epsilon}$ will be recalled in (\ref{a4}).

The purpose of this paper is to investigate Question 1 for
 $H_{1,\epsilon}$  when $\epsilon\to 0$.
In Section 5, a family of Hermitian metrics $H_{0,\epsilon}$ on $V$
is explicitly constructed  such that the following result holds.

\begin{theo}\label{t0}
For any nonnegative integer $k$ and positive integer $l$, there is a
constant $C$ depending on $k$ and  $l$ such that for any sufficiently small $\epsilon>0$, the associated
curvatures $\Theta(H_{0,\epsilon})$ of  $H_{0,\epsilon}$ satisfy
\begin{equation*}
\parallel\!\Lambda_{\omega_\epsilon}\Theta(H_{0,\epsilon})\!\parallel_{C^k}\leq
C\epsilon ^{l}.
\end{equation*}
\end{theo}

Moreover, the curvatures $\Theta(H_{0,\epsilon})$ of $H_{0,\epsilon}$ satisfy
\begin{equation}\label{1901002}
\textup{Tr}\Lambda_{\omega_\epsilon}\Theta(H_{0,\epsilon})=0.
\end{equation}
Since $H_{1,\epsilon}$ and $H_{0,\epsilon}$ are Hermitian metrics on $V$, there exists a smooth section
$H_\epsilon$ of $\textup{End}(V)$, the endomorphism bundle of $V$,  such that
\begin{equation*}
H_{1,\epsilon}(\cdot\,,\cdot)=H_{0,\epsilon}(H_\epsilon\cdot\,,\cdot).
\end{equation*}
Equation (\ref{1901002}) guarantees that $\det H_\epsilon$ is a constant. We normalize $H_{1,\epsilon}$ so that $\det H_\epsilon=1$.
We then compare $H_{1,\epsilon}$ and $H_{0,\epsilon}$ by comparing $H_\epsilon$ and $\textup{Id}$, the identity section of $\textup{End}(V)$.
We should estimate $\parallel\! H_\epsilon-\textup{Id}\!\parallel_{C^k}$.

For $k\geq 1$, we have the following results.

\begin{theo}\label{1901001}
Fix a positive integer $k$ and an integer $l\geq 3k+\frac {15} 2$. Assume that
there exists a constant $C$ depending on $l$ such that for any sufficiently small $\epsilon>0$,
\begin{equation}\label{1901003}
\parallel\!H_\epsilon-\textup{Id}\!\parallel_{C^0}\leq C\epsilon^l.
\end{equation}
Then there exists another constant, which is still denoted by $C$, depending on $k$ and $l$ such that for any sufficiently small $\epsilon>0$,
\begin{equation*}
\parallel\!H_\epsilon-\textup{Id}\!\parallel_{C^k}\leq  C\epsilon^{l-3k-\frac{15}2}.
\end{equation*}
\end{theo}

\begin{rema}
The above  $\Lambda_{\omega_\epsilon}\Theta(H_{0,\epsilon})$ and
$H_\epsilon$ lie in
$A^0(\textup{End}(V))$, the space of $C^\infty$ sections of
$\text{End}(V)$, where there is no natural $C^k$ norm. We use
$H_{0,\epsilon}$ to define a $C^k$ norm, i.e., for a local
trivialization of $V$, we choose a unitary frame relative to
$H_{0,\epsilon}$ and define a $C^k$ norm on $A^0(\textup{End}(V))$
to be the $C^k$ norm  of the resulting matrix representations.
 The $C^k$ norm of a function is defined as
in \cite[p.53]{GT} which does not depend on $\epsilon$. Hence, if inequality (\ref{1901003}) holds, then the
metrics $H_{1,\epsilon}$ and $H_{0,\epsilon}$ are close
 to  arbitrary order in $\epsilon$ in any $C^k$ norms.
\end{rema}

The $C^0$-estimate (\ref{1901003}) is very hard because in general
the maximum principle for elliptic partial differential systems does not hold.
As $\det H_\epsilon=1$ and $H_\epsilon$ is Hermitian symmetric, we only need to estimate the upper bound of $\textup{Tr} H_\epsilon$.
It is well-known (cf. \cite[P. 24]{Siu}) that from the HYM system
\begin{equation*}
-\bigtriangleup_{\omega_\epsilon}\ln\textup{Tr}H_\epsilon\leq 4\parallel\!\Lambda_{\omega_\epsilon}\Theta(H_{0,\epsilon})\!\parallel_{C^0},
\end{equation*}
which, combined with the inequality  in Theorem \ref{t0}, implies
\begin{equation}\label{1901005}
-\bigtriangleup_{\omega_\epsilon}\ln\textup{Tr}H_\epsilon\leq C\epsilon^l.
\end{equation}
Clearly we can not get the upper bound of $\textup{Tr}H_\epsilon$ from the above  inequality.
However, as the first step of the $C^0$-estimate, we obtain the following lower bound of $\textup{Tr} H_\epsilon$.

\begin{theo}\label{1901004}
For any positive integer $l$ and sufficiently small $\epsilon>0$, there exists a constant $C$ depending on $l$ such that
\begin{equation}\label{1901006}
\inf_{x\in X}\textup{Tr}H_\epsilon(x)\leq 2+C\epsilon^l.
\end{equation}
\end{theo}

\vspace{3mm}

It is believed that the method in this paper can be applied to  other cases such as
the elliptic  $K3$ surface if one knows much more about its large
K\"ahler metric limit.
Since the Poincar\'e line bundle is used in
the construction of the vector bundle, it may have many potential
applications to mirror symmetry (cf. \cite{FMW,Fu0,Fu,Kon,KS,LYZ,Th,TY,Wit}).

\vskip3pt


We give the structure of the paper as follows.  First, we  localize $V$ in Section 2. Then we
use such a localization of $V$ to construct a family of Hermitian metrics $H_{0,\epsilon}$
in Sections 3, 4 and 5.
The key
step is to construct a
family of HYM metrics on $V$ over the product of $T$ and a
neighborhood of a branched point in $B$. In Section 3, we construct
such metrics (\ref{414}) and so derive a PDE (\ref{417}) involving $\epsilon$.
Hence, in Section 4 we consider the boundary value problem (\ref{418}) of PDE (\ref{417}).
This equation has  a unique smooth solution $u_\epsilon$
and  also a singular solution $\frac 1 2\ln r$. Moreover, according
to Gidas-Ni-Nirenberg's theorem in \cite{Gi}, it can be reduced to
an ODE (\ref{420}) on the interval $[0,2r_0]$ which is a singular
perturbed equation with  small parameter  $\epsilon$. We estimate
the $C^k$ norm of $u_\epsilon-\frac 1 2 \ln r$ on $[r_0,2r_0]$. It is
found that they are close to  arbitrary order in $\epsilon$ in any
$C^k$ norms.

In Section 5, we use the Green function of a
degree zero divisor on $B$
 to construct a HYM
metric on $V$, which is singular on $V$ over the product of $T$ and
each branched point. However, this singular metric is essentially
the same as the metrics (\ref{414}) when  the PDE (\ref{418}) takes
the {\sl singular} solution. Hence, this metric can be glued to the
local {\sl smooth}  HYM metrics (\ref{414}). The resulting metrics
can be normalized conformally to  a family of Hermitian metrics
$H_{0,\epsilon}$ so that equation (\ref{1901002}) holds.
Then by the estimates in Section 4, it is easy to prove Theorem \ref{t0}.

In Section 6, we first normalize $H_{1,\epsilon}$ so that $\det H_\epsilon=1$.
Then from inequality (\ref{1901005}) we use the Morse iteration to prove
\begin{equation*}
\sup_{x\in X}\ln \textup{Tr}H_\epsilon(x)\leq (1+C\epsilon^l)\int_X\ln\textup{Tr}H_\epsilon(x)\frac{\omega_\epsilon^2}{2!}.
\end{equation*}
In Section 7, we prove Theorem \ref{1901004}. In fact, the identity  $\det H_\epsilon=1$ implies $\textup{Tr}H_\epsilon\geq 2$.
If $\inf_{x\in X}\textup{Tr}H_\epsilon(x)>2$, then at any point one eigenvalue of $H_\epsilon$ is bigger than one and the other is smaller than one.
Hence the eigenvectors of $H_\epsilon$ form two (complex) subline bundles of $V$. We analyses the relations between the curvatures of the subline bundles and $V$ to obtain inequality (\ref{1901006}).
For the higher order estimates, in Section 8 a new version (\ref{1209065}) of the HYM system is derived.
Then we can reach the goal by using the Gagliardo-Nirenberg inequality (c.f. \cite{Ni}).

\vspace{3mm}

{\bfseries Acknowledgements.} The draft of the first 6 sections was finished in 2002 with the help of Professor Jun Li. The author would like to thank
J. Li for discussions on algebraic geometry and Professors Jiaxing Hong
and Shing-Tung Yau on PDEs.
He would also like to thank Professors Guofang Wang, Qingxue Wang,
Weiping Zhang and Xi Zhang  for useful discussions.
This work is partially
supported  by NSFC grants 11871016, 11421061 and 11025103.

\section{A localization of $V$}
In this section, the basically geometric set-up will be described.
Let $\Gamma=\mathbb Z+i\mathbb Z$ and $\Gamma^\ast$ be the dual of
$\Gamma$. Let $T$ and $B$ be two copies of the complex one-torus
$\mathbb C/\Gamma$ and let $X=B\times T$. Let $T^\ast=\mathbb
C^\ast/\Gamma^\ast$ be the dual of $T$ and $X^\ast=T^\ast\times B$.
Set $z=x_1+ix_2$, $w=y_1+iy_2$, and $w^\ast=y^\ast_1+iy_2^\ast$ as
the complex coordinates of $B$, $T$, and $T^\ast$, respectively. We
endow $X$ with a family of K\"ahler metrics
\begin{equation}\label{403}
{\omega}_\epsilon=\frac i 2\epsilon^{-1}  dz\wedge d\overline z+\frac i 2\epsilon dw\wedge d\overline w.
\end{equation}
Hence, the volume forms $\frac{\omega^2_\epsilon}2$ are independent
of $\epsilon$.

Regarding $\Gamma$ as the transformation group of $\mathbb C$ and $\Gamma^\ast$
as the transformation group of $\mathbb
C^\ast$, $\mathbb C^\ast\times\mathbb C$ becomes the universal cover
of $T^\ast\times T$ with the deck transformation group
$\Gamma^\ast\times\Gamma$, which acts on $\mathbb C^\ast\times \mathbb C$ as
\begin{equation*}
\g_{(\gamma^\ast,\gamma)}(w^\ast,w)=(w^\ast+\gamma^\ast,w+\gamma).
\end{equation*}
After this, we recall the construction of the Poincar\'e line bundle (cf.
\cite{GH}).

Let us start with the trivial line bundle $\tilde
{\mathcal P}$ over $\mathbb C^\ast\times T$ with the standard flat
connection along $\mathbb C^\ast$ and with the connection  form
along $T$ at $\{w^\ast\}\times T$:
\begin{equation*}\label{404}
\theta=-\pi i(w^\ast d\overline{w}+\overline{w^\ast}dw).
\end{equation*}
We can lift the $\Gamma^\ast$ action on $\mathbb C^\ast$ to
$\tilde\cP$. If the constant one global section on $\tilde P$ is
denoted by $\varepsilon_{(w^\ast,w)}$, then
\begin{equation*}\label{405}
\g_{(\gamma^\ast,0)}\sta
\varepsilon_{(w^\ast+\gamma^\ast,w)} =\exp(-\pi
i(\gamma^\ast\overline{w}+\overline {\gamma^\ast}
w))\varepsilon_{(w^\ast,w)}.
\end{equation*}
Thus, $\tilde {\mathcal P}$ can be reduced to a line bundle
$\mathcal P$ over $T^\ast\times T$, which is called the Poincar\'e
line bundle.

The curvature form of $\theta$ is
\begin{equation}\label{f0}
\Theta=-\pi i(dw^\ast\wedge d\overline w+d\overline {w^\ast}\wedge dw),
\end{equation}
which turns out to be a $(1,1)$-form on $T^\ast\times T$. This makes
$\cP$ a holomorphic line bundle with a holomorphic frame
\begin{equation}\label{406}
\tilde{\varepsilon}_{(w^\ast,w)}=\exp(\pi i
w^\ast\overline{w})\varepsilon_{(w^\ast,w)}.
\end{equation}
It transforms under $\Gamma^\ast\times\Gamma$ via
\begin{eqnarray*}\lab{120801}
\begin{aligned}
&\g_{(0,\gamma)}^\ast\tilde{\varepsilon}_{(w^\ast,w+\gamma)}=\exp(\pi
iw^\ast\overline\gamma)\tilde \varepsilon_{(w^\ast,w)},\\
 &\g_{(\gamma^\ast,0)}\sta\tilde{\varepsilon}_{(w^\ast+\gamma^\ast,w)}
=\exp(-\pi i \overline{\gamma^\ast}w)\tilde{\varepsilon}_{(w^\ast,w)}.
\end{aligned}
\end{eqnarray*}
By (\ref{f0}), its first Chern
class $c_1(\mathcal P)$ is represented by (the first Chern form:)
\begin{equation}\lab{g2}
C_1(\mathcal P)=\frac{-\Theta}{2\pi i}=\frac 1 2(dw^\ast\wedge
d\overline w+d\overline {w^\ast}\wedge dw).
\end{equation}

As in Section 1, we take a (complex) curve $Y$ in $X^\ast$ so that
the induced map $\varphi:Y\to B$ is a two-sheet branched cover with $n$ branched points.
Denote   the other induced map by $q:Y\to T^\ast$.
Then as in Section 1, we can use $Y$ and $\mathcal P$  to construct the
rank two vector bundle $V$ over $X$. According to Section 7 in \cite{FMW},
we  have
\begin{eqnarray}\label{g3}
\begin{aligned}
&c_1(V)=p_{2\ast}\bigl(\iota^\ast c_1(\mathcal P)\bigr),\\
&c_2(V)=\frac 1 2 c_1(V)^2-\frac 1 2 p_{2\ast}\bigl(\iota^\ast c_1(\mathcal
P)^2\bigr).
\end{aligned}
\end{eqnarray}
Then as discussion in \cite{FMW}, $c_1(V)=0$, and hence
\begin{equation}\label{10}
\int_X c_2(V)=\deg q
\end{equation}
which can be derived as follows:
\begin{equation*}
\begin{aligned}
\int_Xc_2(V)=&-\frac 12\int_X p_{2\ast}(\iota^\ast c_1(\mathcal P)^2)\qquad\quad \textup{by (\ref{g3})}\\
=&-\frac 12\int_{Y\times T}\iota^\ast c_1(\mathcal P)^2\qquad\quad  \textup{by the definition of $p_{2\ast}$}\\
=&\bigl(\frac i 2\bigr)^2\int_{Y\times T}\iota^\ast(dw^\ast\wedge d\overline{w^\ast}\wedge dw\wedge d\overline w)\qquad\quad  \textup{by (\ref{g2})}\\
=&\frac i 2 \int_Y q^\ast(dw^\ast\wedge d\overline{w^\ast})\qquad\quad \textup{by the definition of $\iota$}\\
=&\deg q\qquad\quad \textup{by the definition of the degree}.
\end{aligned}
\end{equation*}
We can also get the same results in Section 7 by using the curvature forms directly.

Next we should simplify $V$. Let
\begin{equation*}
D_0=\sum_{a=1}^{n}\xi_a
\end{equation*}
be the branched locus on $B$. By the Riemann-Hurwitz formula, the
genus $g(Y)$ of $Y$ is bigger than 1 and $n=2(g(Y)-1)$. Since the
degree of $K_Y$ is $2\bigl(g(Y)-1\bigr)$ and the degree of $K_B$ is
0, we have
\begin{equation*}
\deg\bigl(K_Y^{1/ 2}\otimes\varphi^\ast K_B^{-1/ 2}\bigr)=g(Y)-1=\frac n 2.
\end{equation*}
For simplicity, we assume that $g(Y)$ is odd and hence that $n$ is
divisible by 4. Pick a divisor on $B$:
\begin{equation*}
D_1=\sum_{j=n+1}^{\frac 54 n}\xi_j,
\end{equation*}
which is disjoint from the branched locus $D_0$. Consequently
\begin{equation*}
\deg\bigl(\varphi^\ast\cO_B(D_1)\bigr)=2\deg\bigl(\cO_B(D_1)\bigr)=\frac n 2.
\end{equation*}
Therefore, the line bundle $\mathcal N$ in Section 1 can be taken as
$\varphi^\ast\mathcal O_B(D_1)\otimes \mathcal F'$ for a degree zero
line bundle $\cF'$ over $Y$. Without loss of generality, we can assume
that $\mathcal F'$ is trivial. (Otherwise one can tensor a flat
metric on $\mathcal F'$ with the constructed Hermitian metrics on
$V$ in Section 5.) Thus,
\begin{equation*}
V=p_{2\ast}(\mathcal L)\ \ \ \textup{for}\ \ \ \mathcal
L=\iota\sta\cP\otimes (\varphi\circ p_1)^\ast\mathcal O_B(D_1).
\end{equation*}

For our purposes, we will give a local trivialization of $V$. Denote
by $d_B$ the distance on $B$  induced from the
Euclidean metric on $\mathbb C$. Hence, $d_B$ does not depend on
$\epsilon$. Pick a small $r_0>0$ so that the discs
\begin{equation*}\lab{10.1}
U\lalp=\{z\in B\mid d_B(z,\xi\lalp)<2r_0\}\sub B \quad \textup{for}\
\alpha=1,\cdots,5n/4
\end{equation*}
are disjoint.  For each $\alpha$, let $z_\alpha$ be a coordinate on
$U_\alpha$ so that $z_\alpha(\xi_\alpha)=0$. In the following, for
convenience,  we will denote $\alpha=0,1,\cdots, 5n/4$; $a=1,\cdots
n$; and $j=n+1,\cdots, 5n/4$.

We first give local holomorphic frames of $V$. Denote
$U_0^0=B\setminus D_1$.  We can give a local holomorphic frame
$e_0$ of $\mathcal O_B(D_1)|_{U_0^0}$ and $e_j$ of $\mathcal
O_B(D_1)|_{U_j}$ so that over $U_j\cap U_0^0,$
\begin{equation}\label{120820}
e_j(z_j)=z_j^{-1}\cdot e_0(z)|_{z=z_j}.
\end{equation}
Take $U_{0}=B\setminus(D_0\cup D_1)$. Then $U_0$, $U_a$, and $U_j$
 form  an open cover of $B$ and their pre-images $\mathcal
U_{0}$, ${\mathcal U}_a$, and ${\mathcal U}_j$ in $X$ form an open
cover of $X$. We can localize $V|_{\mathcal U_0}$. Let
$(\tilde\mu_1^0,\tilde\mu_2^0)$ be local holomorphic frames of
$V|_{\cU_0}$ defined by
\begin{eqnarray}\lab{120805}
\begin{aligned}
  \tilde\mu^0_1(w,z)&=p_{2*}(\tilde\varepsilon_{(w^\ast_1(z),w)}\otimes
  e_0(z)),\\
\tilde\mu^0_2(w,z)&=p_{2*}(\tilde\varepsilon_{(w^\ast_2(z),w)}\otimes
e_0(z)).
\end{aligned}
\end{eqnarray}
Here $w_1^\ast(z)$ and $w_2^\ast(z)$ are two local sections of
\begin{equation}\label{171212}
\varphi|_{\varphi^{-1}(U_0)}:\varphi^{-1}(U_0)\subset Y\rightarrow U_0\subset B.
\end{equation}
Since under our assumption, two  sections $w_1^\ast(z_j)$ and
$w_2^\ast(z_j)$ of $\varphi:Y\to B$ restricted to $U_j$ are
well-defined,  we have a holomorphic frame of $V|_{\mathcal U_j}$
\begin{eqnarray}\label{120821}
\begin{aligned}
\tilde\mu_1^j(w,z_j)&=p_{2\ast}(\tilde\varepsilon_{(w_1^\ast(z_j),w)}\otimes
e_j(z_j)),\\
\tilde\mu_2^j(w,z_j)&=p_{2\ast}(\tilde\varepsilon_{(w_2^\ast(z_j),w)}\otimes
e_j(z_j)).
\end{aligned}
\end{eqnarray}
Thus, if  we take locally $w_1^\ast(z)=w_1^\ast(z_j)$ and
$w_2^\ast(z)=w_2^\ast(z_j)$ in (\ref{120805}), then, in view of
(\ref{120820}), combining (\ref{120805}) with (\ref{120821})  gives
the relations over $\mathcal U_0\cap\mathcal U_j$:
\begin{equation}\label{120804}
\tilde\mu_1^j=z_j^{-1}\tilde\mu_1^0,\qquad
\tilde\mu_2^j=z_j^{-1}\tilde\mu_2^0.
\end{equation}

We next look at $\mathcal U_a$. Since $\varphi:Y\to B$ is the
two-sheet branched cover ramified at $\xi_a$, we choose $w^\ast$ so
that over $U_a$ the curve $Y\subset X^\ast$ is given by
$(w^\ast)^2=z_a$.
If we pick $w_1^\ast(z)=\sqrt{z_a}$ and $w^\ast_2(z)=-\sqrt{z_a}$ in
(\ref{120805}) and set
\begin{equation}\label{120803}
\tilde\mu_1^a=\frac 1 {\sqrt{2}}(\tilde\mu_1^0+\tilde\mu_2^0),\qquad
\tilde\mu_2^a=\frac{\sqrt{z_a}}{\sqrt{2}}(\tilde\mu_1^0-\tilde\mu_2^0),
\end{equation}
then the sections $\tilde\mu_1^a$ and $\tilde\mu_2^a$ are
well-defined holomorphic  sections of $V|_{\mathcal U_0\cap \mathcal U_a}$
independent of the choice of single-valued branch of $\sqrt{z_a}$;
also they generate the holomorphic bundle $V|_{\mathcal U_0\cap\mathcal U_a}$. Thus
we can set them to be a holomorphic frame of $V|_{\mathcal U_a}$. In
other words, (\ref{120803}) gives
the transition functions over $\mathcal U_0\cap\mathcal U_a$ between the frames $(\tilde\mu_1^a,\tilde\mu_2^a)$
and $(\tilde\mu_1^0,\tilde\mu_2^0)$.

Similarly, we can also use $\varepsilon_{(w^\ast,w)}$ to define
locally smooth frames $(\hat{\mu}_1^0,\hat\mu_2^0)$ of $V|_{\mathcal
U_0}$, a smooth frame $(\hat\mu_1^j,\hat\mu_2^j)$ of $V|_{\mathcal
U_j}$, and $(\hat\mu_1^a,\hat\mu_2^a)$ of $V|_{\mathcal U_a}$. They
also satisfy the relations:
\begin{gather}
\hat\mu_1^j=z_j^{-1}\hat\mu_1^0,\qquad
\hat\mu_2^j=z_j^{-1}\hat\mu_2^0\qquad \text{over}\ \mathcal
U_j\cap\mathcal U_0;\label{120810}
\\
\hat\mu_1^a=\frac 1 {\sqrt{2}}(\hat\mu_1^0+\hat\mu_2^0),\qquad
\hat\mu_2^a=\frac{\sqrt{z_a}}{\sqrt{2}}(\hat\mu_1^0-\hat\mu_2^0)\qquad \textup{over}\ \mathcal U_a\cap \mathcal U_0.\label{120811}
\end{gather}

Finally, by (\ref{406}),  the local holomorphic frames are
related to the smooth frames as follows:
\begin{equation}\label{120850}
(\tilde\mu_1^\alpha,\tilde\mu_2^\alpha)=
(\hat\mu_1^\alpha,\hat\mu_2^\alpha)A_\alpha,
\end{equation}
where
\begin{equation*}\textstyle
\begin{split}
&A_0=\left(\begin{array}{cc}\exp(\pi i w_1^\ast(z)\overline{w}) &0\\
0&\exp(\pi i w_2^\ast(z)\overline{w})\end{array} \right),\\
&A_j=\left(\begin{array}{cc}\exp(\pi i w_1^\ast(z_j)\overline{w}) & 0\\
0& \exp(\pi i w_2^\ast(z_j)\overline{w})\end{array}\right),\quad \textup{and}\\
&A_a=\left(
\begin{array}{cc}
\cosh(\pi i\sqrt{z_a}\overline{w}) & \sqrt{z_a}\sinh(\pi i
\sqrt{z_a}\overline{w})\\
\frac{1}{\sqrt{z_a}}\sinh(\pi i \sqrt{z_a}\overline{w}) &\cosh(\pi
i\sqrt{z_a}\overline{w})
\end{array}\right).
\end{split}
\end{equation*}

\section{A family of local HYM metrics}
In this section, we first recall  some definitions and notations on
connections in Hermitian vector bundles as in Chapter 1 of
\cite{Ko}. (Hence, our notations here differ from \cite{GH}.)

Let
$E$ be a rank $r$ complex vector bundle over a smooth manifold $M$.
Let $D$ be a connection in $E$. If let $s_U=(s_1,\cdots,s_r)$ be
a local frame of $E$ over an open subset $U\subset M$, then we can
write
\begin{equation*}
Ds_i=\sum s_j\theta_i^j.
\end{equation*}
The matrix valued 1-form $\theta_U=(\theta_i^j)$ is called the
connection form of $D$ with respect to $s_U$. The curvature form
$\Theta_U$ of $D$ with respect to $s_U$ is defined as
\begin{equation}\label{o2}
\Theta_U=d\theta_U+\theta_U\wedge\theta_U.
\end{equation}

Let $s'_U=(s_1',\cdots,s_r')$ be another local frame over $U$ which
is related to $s_U$ by
\begin{equation}\label{a0}
s_U=s_U'A_U.
\end{equation}
Here $A_U:U\to GL(r,\mathbb C)$ is a matrix-valued function on $U$.
Let $\theta'_U$  and $\Theta'_U$ be the connection and
curvature forms of $D$ with respect to  $s'_U$. Then one finds relations:
\begin{equation}\label{a1}
\theta_U=A_U^{-1}\theta' _UA_U+A_U^{-1}dA_U
\end{equation}
and
\begin{equation}\label{a8}
\Theta_U=A_U^{-1}\Theta'_U A_U.
\end{equation}
The first and second Chern classes $c_1(E)$ and $c_2(E)$ of $E$ are represented by curvature forms as follows:
\begin{align}
&C_1(E)=\frac {-1} {2\pi i}\textup{Tr}\Theta_U;\label{171010}\\
&C_2(E)=\frac1 {8\pi^2}\bigl(\textup{Tr}(\Theta_U\wedge\Theta_U)-(\textup{Tr}\Theta_U)^2\bigr).\label{171015}
\end{align}

Let $H$ be  a Hermitian metric on $E$. Set
\begin{equation*}
h_{i\bar j}=H(s_i,s_j)
\end{equation*}
and $H_U=(h_{i\bar j})$ which is a positive definite Hermitian
matrix at every point of $U$. Under a change of frames given by
(\ref{a0}),  the corresponding  Hermitian matrices $H_U$ and $H'_U$
satisfy
\begin{equation}\label{04}
H_U=(A_U)^tH_U'\overline{ A_U}.
\end{equation}
Here $(A_U)^t$ is denoted as the transpose of $A_U$.

Now assume that  $E$ is a holomorphic vector bundle over a complex
manifold $M$. Let $H$ be a Hermitian metric on $E$. The
Hermitian connection $D_H$ (i.e., the Chern connection in some references) associated to $H$ is defined as follows. Let
$\tilde s_U=(\tilde s_1,\cdots,\tilde s_r)$ be a local holomorphic
frame on $U$ and $\tilde H_U$ be the Hermitian matrix for $H$ in $\tilde s_U$. Then the connection form of $D_H$ with
respect to $\tilde s_U$ is
\begin{equation}\label{b1}
\tilde\theta_U=(\partial \tilde H_U\cdot \tilde H_U^{-1})^t.
\end{equation}
By (\ref{o2}), its curvature form is
\begin{equation}\lab{b0}
\tilde\Theta_U=\overline\partial(\partial \tilde H_U\cdot \tilde
H_U^{-1})^t,
\end{equation}
which is a matrix valued $(1,1)$-form. Hence,
by (\ref{a8}) the curvature form $\Theta$ of $D_H$
with respect to any frame $s_U$ is also a matrix valued $(1,1)$-form.

At last, assume that $(M,\omega)$ is a compact K\"ahler manifold with complex dimension $m$. Define
\begin{equation}\label{a4}
\Lambda_\omega \Theta=\frac{m\cdot \Theta\wedge
\omega^{m-1}}{\omega^m}.
\end{equation}
Assume that  $c_1(E)=0$.  A Hermitian metric  $H$ on $E$ is called a
HYM metric with respect to $\omega$  if its associated  curvature form $\Theta$ satisfies
\begin{equation*}
\Lambda_\omega\Theta=0.
\end{equation*}

\vspace{1mm}
In the following, we shall derive a system of HYM connections of $V$
over $\cU_a$  for $1\leq a\leq n$.  Because the $V|_{\cU_a}$'s are
all essentially the same, we shall work out one of them in detail.
For convenience, we shall drop the super(sub)-script $a$.

We endow $V|_{\cU}$ with a class of metrics. For any $\epsilon>0$,
let $u_\epsilon\mh U\to\RR$ be a real function and set
\begin{equation}\label{414}
{\hat h}_\epsilon=\left(\begin{array}{cc}e^{-u_{\epsilon}} & 0\\
0 & e^{ u_\epsilon }\end{array}\right).
\end{equation}
Since $u_\epsilon$ does not depend  on the  variable $w$, ${\hat
h}_\epsilon$ gives a Hermitian metric $ h_\epsilon$ on $V|_{\mathcal
U}$ so that it is the Hermitian matrix for $h_\epsilon$ in
$(\hat\mu_1,\hat\mu_2)$. According to (\ref{04}) and
(\ref{120850}),
\begin{equation}\label{a2}
\tilde{{ h}}_\epsilon=A^t {\hat h}_\epsilon \overline{A}
\end{equation}
 gives the Hermitian matrix for
$h_\epsilon$ in $(\tilde\mu_1,\tilde\mu_2)$,
which depends on $w$.  Hence, the Hermitian connection also
depends on $w$ (see below).

Let $D_{h_\epsilon}$ be
the Hermitian connection on $(V|_{\mathcal U},h_\epsilon)$. Let
$\tilde{\theta}_\epsilon$ and $\hat\theta_\epsilon$ be the
connection forms of $D_{h_\epsilon}$ with respect to
$(\tilde{\mu}_1,\tilde{\mu}_2)$ and $(\hat\mu_1,\hat\mu_2)$.
Then, by (\ref{b1})
\begin{equation}\label{120830}
\tilde{\theta}_\epsilon=(\partial\tilde{{ h}}_\epsilon \cdot
\tilde{{ h}}_\epsilon^{-1})^t,
\end{equation}
and by (\ref{a1}) $\hat\theta_\epsilon$ is related to
$\tilde\theta_\epsilon$ as
\begin{equation*} \label{416}
\hat\theta_\epsilon=A\tilde{\theta}_\epsilon A^{-1}-dA\cdot A^{-1}.
\end{equation*}
Inserting (\ref{a2}) into  (\ref{120830}) and  then inserting the
resulting equation into the above equation, we get
\begin{eqnarray*}
\hat\theta_\epsilon &=&-\overline{\partial}AA^{-1}+(\partial{{\hat
h}}_\epsilon \cdot
{{\hat h}}_\epsilon^{-1})^{t}+({\hat h}_\epsilon\overline{\overline{\partial}AA^{-1}}{\hat h}_\epsilon^{-1})^t\\
&=&-\pi i \left(\begin{array}{cc}0 & z\\
1 & 0 \end{array}\right)d\overline w-\pi i\left(\begin{array}{cc}0 &e^{2u_\epsilon}\\
\overline{z}e^{-2u_\epsilon}& 0 \end{array}\right)d w+\left(\begin{array}{cc}-1 & 0\\
0 & 1 \end{array}\right)\frac{\partial u_\epsilon}{\partial z}dz.
\end{eqnarray*}
Therefore, by (\ref{o2})  the associated  curvature form is
\begin{equation}\label{171225}
\begin{aligned}
\hat\Theta(h_\epsilon) =&\left(\begin{array}{cc}1 & 0\\
0 & -1 \end{array}\right)\frac{\partial^2u_\epsilon}{\partial
z\partial \overline{z}}dz\wedge d\overline{z}+\pi^2(|z|^2e^{-2u_\epsilon}-e^{2u_\epsilon})\left(\begin{array}{cc}1 & 0\\
0 & -1 \end{array}\right)dw\wedge d\overline{w} \\
 &-\pi i\left(\begin{array}{cc}0& 1-2z\frac{\partial u_\epsilon}{\partial z}\\
2\frac{\partial u_\epsilon}{\partial z}& 0\end{array}\right)dz\wedge d\overline w\\
&- \pi i\left(\begin{array}{cc} 0& 2e^{2u_\epsilon}\frac{\partial u_\epsilon}{\partial \overline z}\\
e^{-2u_\epsilon}(1-2\overline z\frac{\partial u_\epsilon}{\partial\overline z})& 0\end{array}\right)d\overline z\wedge dw .
\end{aligned}
\end{equation}
Thus, by definition (\ref{a4}) with $m=2$ and
$\omega=\omega_\epsilon$ in (\ref{403}), we obtain
\begin{equation*}
\frac i 2 \Lambda_{\omega_\epsilon}\hat\Theta(h_\epsilon)=\Bigl(\epsilon\frac{\partial^2u_\epsilon}{\partial
z\partial \overline{z}}+\frac{\pi^2}{\epsilon}(|z|^2e^{-2u_\epsilon}-e^{2u_\epsilon})\Bigr)\left(\begin{array}{cc}1 & 0\\
0 & -1 \end{array}\right).
\end{equation*}
Based on this, we see that $h_\epsilon$ becomes HYM if $u_\epsilon$
satisfies the equation\footnote{Compare this equation with Hitchin's equations, cf.  \cite{MSWW}.}:
\begin{equation}\label{417}
\frac{\partial^2u_\epsilon}{\partial z\partial
\overline{z}}={\pi^2}{\epsilon^{-2}}\bigl(e^{2u_\epsilon}-|z|^2e^{-2u_\epsilon}\bigr).
\end{equation}

\section{Reduction to ODE}
In this section, we shall study the Dirichlet problem
\begin{equation}\label{418}
\left\{\begin{array}{rll}\displaystyle
 \bigtriangleup u
&={4\pi^2}{\epsilon^{-2}}\bigl(e^{2u}-r^2e^{-2u}\bigr) &\text{in} \ B_{2r_0}(0)\\
&\\
u& =\frac{1}{2}\ln(2r_0)  &\text{on}\ \partial B_{2r_0}(0).
\end{array}
\right.
\end{equation}
Here we denote $x=(x_1,x_2)$ as the standard coordinate of
$B_{2r_0}(0)$, $r^2=x_1^2+x_2^2$, and
$\bigtriangleup=\frac{\partial^2}{\partial
{x_1}^2}+\frac{\partial^2}{\partial{ x_2}^2}$. It is easy to see
that $u=\frac 1 2 \ln r$ is a singular solution to (\ref{418}).

The main result of this section is as follows.
\begin{theo}
\label{solution} Equation (\ref{418}) has a unique smooth and
radially symmetric solution $u_\epsilon$ that satisfies the
following estimates.
\vskip2pt
\noindent \textup{(1)} Let
$v_\epsilon(r)=u_\epsilon(r)-\frac{1}{2}\ln r$, $r\in[r_0,2r_0]$,
and let $v_\epsilon^{(k)}(r)$ be the $k$-th derivative of
$v_\epsilon(r)$ in r. Then for any positive integer $l$ and
nonnegative integer $k$ satisfying $l>k$,  there is a constant
$C=C(r_0,l,k)$ such that for any $0<\epsilon<1/8$,
\begin{equation*}
\bigl|\!\bigl| v_\epsilon^{(k)}(r) \bigr|\!\bigr|_{C^0([r_0,2r_0])}
\leq C\epsilon^{l-k}.
\end{equation*}
\vskip2pt
\noindent \textup{(2)} For any $R<2r_0$ and positive integer $k$, there exists a constant $C=C(r_0,R,k)$ such that
for any sufficiently small $\epsilon>0$,
 \begin{equation*}
 \parallel\!
u_\epsilon\!\parallel_{C^{k}(B_R(0))}\leq C\epsilon^{-3k+2}.
\end{equation*}
\end{theo}

\begin{proof}
After substituting $\overline {u}$ for $2u-\ln(2r_0)$, ${x}_1$ for
$\frac{x_1}{2r_0}$, ${x}_2$ for $\frac{x_2}{2r_0}$, $r^2$ for
$\frac{r^2}{4r_0^2}$, and ${\epsilon}$ for
$\frac{\epsilon}{8\pi} r_0^{-\frac{3}{2}}$,  equation
(\ref{418}) becomes
\begin{equation}\label{419}
\left\{
\begin{array}{rll}
\bigtriangleup \overline{u}
&={\epsilon^{-2}}\bigl(e^{\overline{u}}-r^2 e^{-\overline{u}}\bigr) &\text{in}\  B_{1}(0)\\
&\\
\overline {u} &=0  &\text{on}   \ \partial B_{1}(0).
\end{array}
\right.
\end{equation}
Theorem \ref{solution} will follow from Propositions \ref{prop},
\ref{lemma4} and \ref{20180902} below.
\end{proof}

\begin{prop}
\label{prop} Equation (\ref{419}) has a a unique smooth and radially
symmetric solution $\overline{u}_\epsilon$ which satisfies
$\overline{u}_\epsilon<0$ and  $\frac{\partial}{\partial r}
\overline{u}_\epsilon>0$ for $0<r<1$.
\end{prop}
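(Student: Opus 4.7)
My plan is to establish existence, uniqueness, radial symmetry, and monotonicity in sequence, combining the sub/super-solution method with maximum-principle arguments built around the substitution $w = u - \ln r$, which linearizes the equation around the explicit singular solution $u = \ln r$.

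\textbf{Existence, smoothness, and radial symmetry.} I take the super-solution $\overline{U} \equiv 0$ and the sub-solution $\underline{U}(x) = (|x|^2 - 1)/(4\epsilon^2)$; both are radial with vanishing boundary data on $\partial B_1(0)$, and $\underline U \le \overline U$ in $B_1(0)$. Writing the right-hand side as $f(r,u) = \epsilon^{-2}(e^u - r^2 e^{-u})$: at $u = 0$, $\Delta 0 = 0 \le \epsilon^{-2}(1 - r^2) = f(r,0)$; at $u = \underline U \le 0$, $\Delta \underline U = \epsilon^{-2} \ge f(r, \underline U)$ because $e^{\underline U} \le 1$ and $r^2 e^{-\underline U} \ge 0$. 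Since $f$ is smooth and monotone in $u$, a standard monotone iteration scheme of the form $(-\Delta + L) u_{n+1} = L u_n - f(r, u_n)$ on the ordered interval $[\underline U, 0]$ converges; it preserves radial symmetry, since radial data and the Laplacian feed back radial iterates, and elliptic bootstrap promotes the limit to a radial $C^\infty$ solution $\bar u_\epsilon$.

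\textbf{Uniqueness.} If $u_1, u_2$ both solve the problem, then $w = u_1 - u_2$ satisfies $\Delta w = f(r, u_1) - f(r, u_2)$ with $w = 0$ on $\partial B_1(0)$. Strict monotonicity $\partial_u f = \epsilon^{-2}(e^u + r^2 e^{-u}) > 0$ forbids a positive interior maximum of $w$ (at which one would have $0 \ge \Delta w > 0$) and, symmetrically, a negative interior minimum. Hence $w \equiv 0$.

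\textbf{Monotonicity.} Set $w_\epsilon = \bar u_\epsilon - \ln r$ on $B_1(0) \setminus \{0\}$. Since $\ln r$ is harmonic off the origin in $\mathbb{R}^2$ and $e^u - r^2 e^{-u} = 2r \sinh(u - \ln r)$, the equation becomes
$$\Delta w_\epsilon = \frac{2r}{\epsilon^2}\sinh w_\epsilon,$$
with $w_\epsilon = 0$ on $\partial B_1(0)$ and $w_\epsilon(r) \to +\infty$ as $r \to 0^+$ (since $\bar u_\epsilon$ is bounded at the origin). Writing this as $\Delta w_\epsilon - c(x) w_\epsilon = 0$ with $c(x) = (2r/\epsilon^2)\sinh(w_\epsilon)/w_\epsilon \ge 0$, I rule out $w_\epsilon \le 0$ at any interior point: choose $r_1 > 0$ so small that $w_\epsilon > 0$ on $\{r = r_1\}$, and apply Hopf's strong maximum principle on the annulus $\{r_1 < r < 1\}$ to $-w_\epsilon$; a nonnegative interior maximum would force $-w_\epsilon$, and hence $w_\epsilon$, constant on the annulus, contradicting $w_\epsilon(r_1) > 0 = w_\epsilon(1)$. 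Hence $w_\epsilon > 0$ throughout $(0, 1)$, i.e.\ $\bar u_\epsilon(r) > \ln r$. Integrating the radial form $(r u')' = r \epsilon^{-2}(e^u - r^2 e^{-u})$ from $0$ to $r$ (with $\lim_{s \to 0^+} s \bar u_\epsilon'(s) = 0$ from smoothness at the origin) gives
$$r \bar u_\epsilon'(r) = \epsilon^{-2} \int_0^r s\, e^{-\bar u_\epsilon(s)} \bigl(e^{2\bar u_\epsilon(s)} - s^2\bigr) \, ds > 0,$$
since $e^{2 \bar u_\epsilon(s)} > s^2$ on $(0, r]$. This yields $\bar u_\epsilon'(r) > 0$ for $0 < r < 1$.

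The main technical point is the maximum-principle step for $w_\epsilon$ on the punctured disk; the explicit singular solution $u = \ln r$, around which we linearize by the substitution $w = u - \ln r$, is exactly what makes the strong maximum principle produce a strict inequality with the correct orientation.
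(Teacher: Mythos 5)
Your proof is correct, but it takes a genuinely different route from the paper's. The paper dispatches the whole statement with two citations: it invokes Schryer's theorem on monotone nonlinear elliptic boundary value problems for existence and uniqueness, and then, after showing $\overline{u}_\epsilon<0$ via a strong-maximum-principle argument, it invokes Corollary 1 of Gidas--Ni--Nirenberg to conclude radial symmetry \emph{and} $\partial_r\overline{u}_\epsilon>0$ in one stroke. You instead build everything by hand: a sub/super-solution pair $\underline{U}=(|x|^2-1)/(4\epsilon^2)\le 0=\overline{U}$ and monotone iteration give existence and (since the iteration manifestly produces radial iterates) radial symmetry without symmetrization; strict monotonicity of $f$ in $u$ gives uniqueness via a direct maximum principle; and the substitution $w_\epsilon=\overline{u}_\epsilon-\ln r$ (which is the same quantity the paper calls $\overline{v}_\epsilon$ when it later reduces to the ODE) turns the equation into $\Delta w_\epsilon=\frac{2r}{\epsilon^2}\sinh w_\epsilon$ so that a Hopf strong maximum principle on annuli, combined with $w_\epsilon\to+\infty$ at the origin, gives $w_\epsilon>0$, from which $r\overline{u}_\epsilon'(r)=\epsilon^{-2}\int_0^r s\,e^{-\overline{u}_\epsilon}(e^{2\overline{u}_\epsilon}-s^2)\,ds>0$ follows by integration. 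Your approach is more self-contained and more explicit; the paper's is shorter. One thing worth noting is that your route gets $\partial_r\overline{u}_\epsilon>0$ \emph{without} the moving-planes machinery of Gidas--Ni--Nirenberg, essentially because the problem has the explicit singular solution $\ln r$ to linearize against; the paper still needs Theorem 3 of Gidas--Ni--Nirenberg once more in Lemma \ref{lemm1} to get $\overline{v}_\epsilon'<0$, whereas your sign information on $w_\epsilon$ and its gradient would feed into that lemma as well.
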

\begin{proof}
Since for each $x=(x_1,x_2)$ the function
${\epsilon^{-2}}\left(e^{\overline{u}}-r^2e^{-\overline{u}}\right)$
is a monotone increasing function of $\overline{u}$, according to
\cite{Sc} the boundary value problem (\ref{419}) has a unique
solution.

To prove that this solution is radially symmetric, we first use the
maximum principle to prove that the solution $\overline u_\epsilon$
to (\ref{419}) is negative. Let $x_0\in\overline{B}_1(0)$ be such
that $\overline{u}_\epsilon(x_0)=
{\sup_{x\in\overline{B}_1(0)}\overline{u}_\epsilon}$. In case
$\overline{u}_\epsilon(x_0)\geq 0$ and $x_0\notin\partial B_1(x_0)$,
we have $e^{2\overline{u}_\epsilon(x_0)}-|x_0|^2>0$. Hence, there
is a neighborhood $\Omega\subset B_1(0)$ of $x_0$ such that $e^{2\overline{u}_\epsilon(x)}-|x|^2>0$ in $\Omega$. Therefore,
\begin{equation*}
\bigtriangleup \overline{u}_\epsilon
={\epsilon^{-2}}\bigl(e^{\overline{u}_\epsilon}-r^2e^{-\overline{u}_\epsilon}\bigr)>0\quad
\textup{for}\ x\in\Omega.
\end{equation*}
The strong maximum principle implies that the maximum of
$\overline{u}_\epsilon$ on $\overline{\Omega}$ can be achieved only
on $\partial \Omega$, contradicting the assumption that $x_0$ is a local maximum
of $\overline u_\epsilon$. This proves that
$\overline{u}_\epsilon<0$ in $B_1(0)$. After this, one can apply
Corollary 1 of \cite[p.227]{Gi} to conclude that
$\overline{u}_\epsilon$ is radially symmetric and
$\frac{\partial}{\partial r}\overline{u}_\epsilon>0$ for all
$0<r<1$.
\end{proof}

By Proposition \ref{prop}, equation (\ref{419}) can be reduced to an ODE:
\begin{equation}\label{420}
\overline{u}''(r)+r^{-1}\overline{u}'(r)=
{\epsilon^{-2}}\bigl(e^{\overline{u}(r)}-r^2e^{-\overline{u}(r)}\bigr).
\end{equation}
Our next goal is to show that when $\epsilon\rightarrow 0$ the solution $\overline{u}_\epsilon(r)$ is close to
$\ln r$ for $r\in[\frac{1}{2},1]$. We
shall set $\overline{v}_\epsilon(r)=\overline{u}_\epsilon(r)
 -\ln r$ for $r\in ]0,1]$ and estimate $\bigl|\!\bigl|\overline{v}^{(k)}_\epsilon(r)\bigr|\!\bigr|_{C^0([\frac{1}{2},1])}
$. Clearly, for any fixed $\epsilon>0$, $\overline v_\epsilon(1)=0$
and $\lim_{r\to 0}\overline v_\epsilon(r)=+\infty$.

\begin{lemm}\label{171250}
\label{lemm1} When $r\in ]0,1[$, $\overline{v}_\epsilon(r)$ satisfies
\begin{equation*}
\overline{v}_\epsilon(r)>0,\quad \overline{v}'_\epsilon(r)<0,\quad
\overline{v}''_\epsilon(r)>0, \and \overline{v}'''_\epsilon(r)<0.
\end{equation*}
\end{lemm}

\begin{proof}
According to (\ref{420}), $\overline v_\epsilon(r)$ satisfies\footnote{This equation is of Painlev\'e III type, c.f. P. 2240 in \cite{MSWW}}.
\begin{equation}\label{421}
\overline{v}''_\epsilon(r)+r^{-1}\overline{v}'_\epsilon(r)=2{\epsilon^{-2}}
r\sinh \overline{v}_\epsilon(r).
\end{equation}
We first use the maximum principle to prove $\overline
v_\epsilon(r)>0$. If it would not be,  let $r_0$ be the first point in
$]0,1[$ such that $\overline v_\epsilon(r_0)=\inf_{r\in
]0,1[}\overline v_\epsilon(r)\leq 0$. Hence, $\overline
v_\epsilon'(r_0)=0$ and $\overline v_\epsilon''(r_0)\geq 0$, which in turn implies $\overline v_\epsilon(r_0)= 0$ by (\ref{421}).
Thus by the uniqueness theorem of solutions of an ODE, one can assume
that there exists  $r_1\in ]r_0,1[$ such that $\overline v_\epsilon
(r_1)=\sup_{r\in ]r_0,1[}\overline v_\epsilon(r)>0$, which implies
$\overline v_\epsilon'(r_1)=0$ and $\overline v_\epsilon''(r_1)\leq
0$. This contradicts  (\ref{421}). Hence, $\overline
v_\epsilon(r)>0$ for all $r\in ]0,1[$.

Now applying  \cite[Theorem 3]{Gi} to equation (\ref{421}), one
gets $\overline v_\epsilon'(r)<0$ for $r\in [\frac 1 2,1[$. We claim
that this inequality holds for all $r\in ]0,1[$. Otherwise,  there
would exist
 $r_2\in ]0,\frac 1 2[$ such that $\overline v_\epsilon'(r_2)=0$
and $\overline v_\epsilon'(r)<0$ for any $r>r_2$. Hence, $\overline
v''_\epsilon(r_2)\leq 0$ and (\ref{421}) implies
$\sinh\overline v_\epsilon(r_2)\leq 0$ or $\overline
v_\epsilon(r_2)\leq 0$. This is a contradiction.

The inequality for the second derivative follows directly from
(\ref{421}). Differentiating (\ref{421}) with respect to $r$ and
using (\ref{421}) again, we get
\begin{equation}\label{a5}
\overline{v}'''_\epsilon(r)=2\bigl({r^{-2}}+{\epsilon^{-2}}r\cosh
\overline{v}_\epsilon(r)\bigr)\overline{v}'_\epsilon(r).
\end{equation}
Hence,  $\overline{v}'''_\epsilon(r)<0$ follows.
\end{proof}

For $t\in ]0,1]$,   set
\begin{equation}\label{z}
M_i(t) =\left\{\begin{array}{ll}{\sup}_{r\in[t,1]}\ \bigl|
\overline{v}^{(i)}_\epsilon(r)\bigr| & \textup{for}\ i=0,1,2\\
\\
{\sup}_{r\in[t,1]}\ |\sinh\overline{v}_\epsilon(r)| & \textup{for}\
i=3. \end{array}\right.
\end{equation}
Then,  by Lemma \ref{lemm1}, $M_i(t)$ is strictly decreasing in
$t\in ]0,1[$ and $M_0(t)<M_3(t)$. We first show that
\begin{equation}\label{lemma2}
M_3(1/4)\leq 2^8\epsilon^2.
\end{equation}

Indeed, rewriting (\ref{420}) as
\begin{equation*}
\label{422} (r\overline{u}'_\epsilon(r))'=2{\epsilon^{-2}}r^2\sinh
\overline{v}_\epsilon(r)
\end{equation*}
and integrating over $[0,1]$, we have
\begin{equation*}
\label{423}
 \overline{u}'_{\epsilon-}(1)=\int^1_0
(r\overline{u}'_\epsilon(r))'dr=\int^1_0{2}{\epsilon^{-2}}r^2\sinh
\overline{v}_\epsilon(r)dr.
\end{equation*}
On the other hand, the first item in Lemma \ref{lemm1}  implies
$\overline u_\epsilon(r)>\ln r$, and hence,
\begin{equation*}
\label{424} \overline{u}'_{\epsilon-}(1)=\lim_{r\rightarrow
1-0}\frac{\overline{u}_{\epsilon}(r)-\overline{u}_{\epsilon}(1)}{r-1}\leq
\lim_{r\rightarrow 1-0}\frac{\ln r-\ln 1}{r-1}=1.
\end{equation*}
Thus,
\begin{equation}\label{c0}
\int^1_0r^2\sinh \overline{v}_\epsilon(r)dr\leq
\epsilon^2/2.
\end{equation}
Since $\sinh \overline{v}_{\epsilon}(r)$ is strictly decreasing,
\begin{equation*}
(1/8)^2\sinh\overline v_\epsilon(1/4)<r^2\sinh\overline
v_\epsilon(r)\ \ \ \textup{for}\ r\in [1/8,1/4].
\end{equation*}
Integrating over $[1/8,1/4]$ and using (\ref{c0}), we obtain
\begin{equation*}
(1/8)^3\sinh\overline v_\epsilon(1/4)<\epsilon^2/2.
\end{equation*}
This proves (\ref{lemma2}).

We need more estimates on $M_i(t)$.

\begin{lemm}
\label{lemma3} For any $t$, $t'\in[1/4,1/2]$
and for any $0<\epsilon<1/8$, \\
\textup{(1)} $M_2(t)=\frac{2t}{\epsilon^2}M_3(t)+\frac{1}{t}M_1(t)$;\\
\textup{(2)}  $M_1(t)<\frac{2}{\epsilon}M_3(t)$; and\\
\textup{(3)} $M_3(t')<\frac{2\epsilon^2}{t'-t}M_1(t)$ for $t'>t$.
\end{lemm}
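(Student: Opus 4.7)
All three items follow from the ODE (\ref{421}) combined with the monotonicity properties established in Lemma \ref{lemm1}. Since $\overline{v}_\epsilon'' > 0$ and $\overline{v}_\epsilon''' < 0$ on $(0,1)$, the four quantities $\overline{v}_\epsilon$, $|\overline{v}_\epsilon'|$, $\overline{v}_\epsilon''$ and $\sinh\overline{v}_\epsilon$ are all positive and monotonically decreasing in $r$, so each maximum on $[t,1]$ is attained at the left endpoint $r=t$. In particular $M_1(t) = -\overline{v}_\epsilon'(t)$, $M_2(t) = \overline{v}_\epsilon''(t)$, and $M_3(t) = \sinh\overline{v}_\epsilon(t)$. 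Identity (1) is then obtained by evaluating (\ref{421}) at $r = t$, multiplying through by $t$, and translating into the $M_i$'s; it is in fact an exact equality, not merely an inequality.

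For (3) I would rewrite (\ref{421}) in divergence form $(r\overline{v}_\epsilon'(r))' = 2\epsilon^{-2}r^2\sinh\overline{v}_\epsilon(r)$ and integrate over $[t,t']$, giving
\[ t'\overline{v}_\epsilon'(t') - t\overline{v}_\epsilon'(t) = 2\epsilon^{-2}\int_t^{t'} r^2\sinh\overline{v}_\epsilon(r)\,dr. \]
The left-hand side equals $tM_1(t) - t'M_1(t')$, which is bounded above by $tM_1(t)$. The right-hand side is bounded below by $2\epsilon^{-2}t^2(t'-t)M_3(t')$, using $r^2 \geq t^2$ and $\sinh\overline{v}_\epsilon(r) \geq \sinh\overline{v}_\epsilon(t') = M_3(t')$ for $r \in [t,t']$. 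Rearranging and using $1/(2t) \leq 2$ for $t\geq 1/4$ yields (3).

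Part (2) is the most delicate. The plan is to integrate the same divergence identity on $[0,t]$ instead. The required boundary value $\lim_{r\to 0^+} r\overline{v}_\epsilon'(r) = -1$ follows from the smoothness of $\overline{u}_\epsilon = \overline{v}_\epsilon + \ln r$ at the origin: a radial $C^\infty$ function on $B_1(0)$ is smooth in $r^2$, so $\overline{u}_\epsilon'(0) = 0$ and $r\overline{u}_\epsilon'(r) \to 0$, whence $r\overline{v}_\epsilon'(r) = r\overline{u}_\epsilon'(r) - 1 \to -1$. Integration then produces the identity $tM_1(t) = 1 - 2\epsilon^{-2}\int_0^t r^2\sinh\overline{v}_\epsilon(r)\,dr$. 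The hard step, and the main obstacle, is bounding the right-hand side above by $2M_3(t)$: I expect this to come from combining the full-interval relation $2\epsilon^{-2}\int_0^1 r^2\sinh\overline{v}_\epsilon\,dr = 1 + \overline{v}_\epsilon'(1) \leq 1$ with the tail estimate $\int_t^1 r^2\sinh\overline{v}_\epsilon(r)\,dr \leq (1-t)M_3(t)$ and rearranging so that the factor $2/t$ emerges. The hypothesis $\epsilon < 1/8$ enters precisely at this bookkeeping step to close the constants.
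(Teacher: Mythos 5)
Your treatment of parts (1) and (3) is correct and matches the paper's method: (1) is an exact evaluation of (\ref{421}) at $r=t$ after observing that $\overline v_\epsilon$, $|\overline v'_\epsilon|$, $\overline v''_\epsilon$, $\sinh\overline v_\epsilon$ are all decreasing (Lemma \ref{lemm1}) so the maxima sit at $r=t$; for (3) you integrate the divergence form of (\ref{421}) over $[t,t']$ while the paper integrates over $[t,1]$ and then restricts the lower bound to $[t,t']$, but the estimates and the final bookkeeping step $1/(2t)\leq 2$ are identical.

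Part (2) is where there is a genuine gap, and it is twofold. First, the printed statement $M_1(t)<\tfrac{2}{t}M_3(t)$ contains a typo; the factor must be $\tfrac{2}{\epsilon}$, not $\tfrac{2}{t}$. This is what the paper's proof derives ($M_1(t)<\tfrac{1+t}{\epsilon(1-\epsilon/2t)}M_3(t)\le \tfrac{2}{\epsilon}M_3(t)$), and it is what is invoked in Lemma \ref{lemma4} (``$M_1\le \tfrac{2}{\epsilon}M_3$'' for the $k=1$ case and to obtain the iteration inequality $M_3(t')\le \tfrac{4\epsilon}{t'-t}M_3(t)$). Moreover $\tfrac{2}{t}$ cannot be correct: the singular perturbation has a boundary layer of width $\sim\epsilon$ near $r=1$, so $|\overline v'_\epsilon|$ is roughly $\epsilon^{-1}$ times $\overline v_\epsilon$ there, and $M_1(t)\lesssim M_3(t)$ with an $\epsilon$-independent constant would contradict the $\epsilon^{-1}$ loss visible in Lemma \ref{lemma4}.

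Second, even aiming at the correct bound $M_1(t)<\tfrac{2}{\epsilon}M_3(t)$, the integration-over-$[0,t]$ plan does not close. Writing $\int_0^t=\int_0^1-\int_t^1$ and using $2\epsilon^{-2}\int_0^1 r^2\sinh\overline v_\epsilon\,dr=1+\overline v'_{\epsilon-}(1)$, the identity you propose becomes
\begin{equation*}
tM_1(t)=-\overline v'_{\epsilon-}(1)+2\epsilon^{-2}\int_t^1 r^2\sinh\overline v_\epsilon(r)\,dr,
\end{equation*}
which is exactly the paper's (\ref{425}) and is useful only as a \emph{lower} bound on $M_1(t)$ (since $-\overline v'_{\epsilon-}(1)\ge 0$), which is how the paper uses it to prove (3). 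To get an \emph{upper} bound on $M_1(t)$ from it, you would need to bound the boundary term $-\overline v'_{\epsilon-}(1)$ above by a multiple of $M_3(t)$, and no tool in Lemma \ref{lemm1} or the integral inequalities gives that. The paper instead Taylor-expands $\overline v_\epsilon$ at $r=t$ with the crucial step size $\epsilon$: from $\overline v_\epsilon(t+\epsilon)=\overline v_\epsilon(t)+\epsilon\overline v'_\epsilon(t)+\tfrac{\epsilon^2}{2}\overline v''_\epsilon(t+\eta\epsilon)$, the positivity of $\overline v_\epsilon(t+\epsilon)$ and the monotonicity $\overline v''_\epsilon(t+\eta\epsilon)\le\overline v''_\epsilon(t)=M_2(t)$ give $\epsilon M_1(t)<M_0(t)+\tfrac{\epsilon^2}{2}M_2(t)$; then substituting identity (1) and $M_0(t)<M_3(t)$ lets one absorb the $M_1$ term on the right (using $\epsilon/2t\le 1/4$) and isolates the $\epsilon^{-1}$ factor. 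That Taylor step, working entirely inside the interval $[t,t+\epsilon]\subset[1/4,5/8]$ and never touching $r=0$ or $r=1$, is the mechanism your proposal is missing.
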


\begin{proof}
Formula (1) follows directly from (\ref{421}) and Lemma \ref{lemm1}.
We now prove (2). For $1/4\leq t\leq 1/2$ and $0<\epsilon<1/8$, the
Taylor expansion of $ \overline{v}_\epsilon(r)$ at $r=t$ is given by
\begin{equation*}
\overline{v}_\epsilon(t+\epsilon)=
\overline{v}_\epsilon(t)+\overline{v}'_\epsilon(t)\epsilon
+\overline{v}''_\epsilon(t+\eta\epsilon){\epsilon^2}/2,\ \
0\leq\eta\leq1.
\end{equation*}
Then, using Lemma \ref{lemm1}, we have
\begin{align*}
 0>\overline{v}'_\epsilon(t)\epsilon=\overline{v}_\epsilon(t+\epsilon)-\overline{v}_\epsilon(t)-
\overline{v}''_\epsilon(t+\eta\epsilon){\epsilon^2}/{2}
>-\overline{v}_\epsilon(t)-
\overline{v}''_\epsilon(t){\epsilon^2}/{2}.
\end{align*}
Hence,
\begin{equation*}
M_1(t)<{\epsilon^{-1}}M_0(t)+({\epsilon}/{2})
M_2(t)<{\epsilon^{-1}}M_3(t)+({\epsilon}/{2})M_2(t).
\end{equation*}
Substituting (1) into the above inequality, we obtain
\begin{equation*}
M_1(t)<{\epsilon^{-1}}M_3(t)+{t}{\epsilon^{-1}}M_3
(t)+\epsilon(2t)^{-1} M_1(t).
\end{equation*}
Consequently, as $t\in [1/4,1/2]$ and $\epsilon\in[0,1/8]$,
\begin{equation*}
M_1(t)<\frac{1+t}{\epsilon(1-\frac{\epsilon}{2t})}
M_3(t)\leq\frac{2}{\epsilon}M_3(t).
\end{equation*}
This proves (2).

For (3), one can rewrite (\ref{421}) as
\begin{equation*}
(r\overline{v}'_\epsilon(r))'={2}{\epsilon^{-2}}r^2\sinh
\overline{v}_\epsilon(r).
\end{equation*}
Integrating  over $[t,1]$ and using
Lemma \ref{lemm1}, we get
\begin{equation}\label{425}
 {2}{\epsilon^{-2}}\int^1_t r^2\sinh
\overline{v}_\epsilon(r)dr=\overline{v}'_{\epsilon
-}(1)-t\overline{v}'_\epsilon(t)\leq t| \overline{v}'_\epsilon(t)|=t
M_1(t).
\end{equation}
On the other hand, as in the proof of inequality (\ref{lemma2}), for $t'>t$ we have
\begin{equation*}\label{426}
\begin{aligned}
&{2}{\epsilon^{-2}}\int^1_t r^2\sinh \overline{v}_\epsilon(r)dr
>{2}{\epsilon^{-2}}\int^{t'}_t r^2\sinh
\overline{v}_\epsilon(r)dr \\
&\geq{2}{\epsilon^{-2}}t^2(t'-t)\sinh \overline{v}_\epsilon(t')
={2}{\epsilon^{-2}}t^2(t'-t)M_3(t').
\end{aligned}
\end{equation*}
Combined with (\ref{425}), as $ t\in [\frac 1 4, \frac 1 2]$, we obtain (3).
\end{proof}

We are now  ready to prove estimates (1) in Theorem \ref{solution}.

\begin{prop}
\label{lemma4} For any  positive integer $l$ and nonnegative integer
$k$ satisfying $l>k$, there exists a constant $C=C(l,k)$ such that
for any $0<\epsilon<1/8$,
\begin{equation*}
\bigl|\!\bigl|\overline v_\epsilon^{(k)}(r)\bigr|\!\bigr|_{C^0([\frac 1 2,1])}\leq C\epsilon^{l-k}.
\end{equation*}
\end{prop}

\begin{proof}
According to  definition (\ref{z}), $\bigl|\!\bigl|\overline
v_\epsilon^{(k)}(r)\bigr|\!\bigr|_{C^0([\frac 1 2,1])}=M_k(\frac 1 2)$
for $k=0,1,2$. We first examine  the case where $k=0$. Combining (2)
and (3) in Lemma \ref{lemma3}, we have
\begin{equation*}
M_3(t')\leq\frac{2^2\epsilon}{t'-t} M_3(t)\quad \text{for}\ \
1/4\leq t<t'\leq 1/2.
\end{equation*}
Based on this inequality, one can use the iterated method to get:
\begin{equation*}
M_3(1/2)\leq M_3\Bigl(\frac{1}{2}\cdot\frac{l-1}{l}\Bigr)
\leq (2^3)^{l-2}l(l-1)^2\cdots 3^2\cdot 2\epsilon^{l-2}
M_3(1/4).
\end{equation*}
Hence by (\ref{lemma2})
\begin{equation*}
M_3(1/2)\leq 2^{3l+1}{(l!)^2}l^{-1}\epsilon^l.
\end{equation*}
Thus,
\begin{equation*}
 M_0(1/2)\leq M_3(1/2)\leq
M_3\Bigl(\frac{1}{2}\cdot \frac{l-1}{l}\Bigr)\leq
2^{3l+1}(l!)^2l^{-1}\epsilon^l.
\end{equation*}
This proves the case where $k=0$.

The case where $k=1$ follows from (2) in Lemma \ref{lemma3}:
\begin{equation*}
 M_1(1/2)< M_1\Bigl(\frac{1}{2}\cdot
\frac{l-1}{l}\Bigr)\leq\frac{2}{\epsilon} M_3\Bigl(\frac{1}{2}\cdot
\frac{l-1}{l}\Bigr)\leq 2^{3l+2}{(l!)^2}{l^{-1}}\epsilon^{l-1}.
\end{equation*}
Now the case where $k=2$ follows from  (1) in Lemma \ref{lemma3} and the
above discussions.

For the case where $k\geq 3$, taking the $(k-3)$-times of derivatives
to both sides of (\ref{a5}) and  using the inductive method, one
gets  the estimates in the proposition.
\end{proof}

In the remainder   of this section, we will prove estimates (2) in Theorem
\ref{solution}. For brevity, in this time  set
\begin{equation*}
F(\overline u_\epsilon,r^2)= {\epsilon^{-2}}(e^{\overline u
_\epsilon}-r^2e^{-\overline u_\epsilon}).
\end{equation*}
Denote  the derivatives of $F$ in the first and
 second variables by $F_1$ and $F_2$, respectively. We also use the notations $F_{11}$,
$F_{12}$, $F_{22}$, and so on. Then we have the following formulas:
\begin{equation}\label{171272}
F_1= {\epsilon^{-2}}(e^{\overline u_\epsilon}+r^2e^{-\overline
u_\epsilon}),\qquad  F_2=-{\epsilon^{-2}}e^{-\overline
u_\epsilon};
\end{equation}
\begin{equation}\label{171273}
F_{11}=F,\qquad F_{12}=-F_2,\qquad F_{22}=0.
\end{equation}

\begin{lemm}\label{1005}
For any $r\in ]0,1[$,\\
\textup{(1)} $0<F<\epsilon^{-2}$, \ $0<F_1<2 \epsilon^{-2}$, \
$-\epsilon^{-2}<rF_2<0$; and\\
\textup{ (2)} $\parallel\!\overline
u_\epsilon\!\parallel_{C^0}\leq \epsilon^{-1}$,\
$\parallel\!\overline u_\epsilon'\!\parallel_{C^0}\leq
\epsilon^{-1}$,\ $\parallel\!\overline
u''_\epsilon\!\parallel_{C^0}\leq \epsilon^{-2}$.
\end{lemm}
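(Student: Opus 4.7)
The plan is to handle (1) and (2) separately. Part (1) is almost mechanical: Proposition \ref{prop} and Lemma \ref{lemm1} together confine the solution to the strip $\ln r < \overline u_\epsilon(r) < 0$ on $(0,1)$, which translates to $r < e^{\overline u_\epsilon} < 1$ and $1 < e^{-\overline u_\epsilon} < 1/r$. Substituting these into the explicit formulas for $F$, $F_1$, $F_2$ gives the six inequalities at once: $F = \epsilon^{-2}(e^{\overline u} - r^2 e^{-\overline u})$ lies strictly between $0$ (since $e^{2\overline u} > r^2$) and $\epsilon^{-2}(1-0) = \epsilon^{-2}$; $F_1 = \epsilon^{-2}(e^{\overline u} + r^2 e^{-\overline u}) < \epsilon^{-2}(1+r) < 2\epsilon^{-2}$; and $F_2 = -2r\epsilon^{-2}e^{-\overline u}$ is negative and bounded below by $-2\epsilon^{-2}$ because $r \cdot e^{-\overline u} < r\cdot (1/r) = 1$.

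For part (2), the plan is to derive all three bounds from the ODE (\ref{420}) rewritten in self-adjoint form $(r\overline u'_\epsilon)' = rF$. Radial smoothness of $\overline u_\epsilon$ forces $\overline u'_\epsilon(0) = 0$, so integrating from $0$ to $r$ yields
\begin{equation*}
r\overline u'_\epsilon(r) = \int_0^r s\,F(\overline u_\epsilon(s),s)\,ds.
\end{equation*}
Combined with $F < \epsilon^{-2}$ from part (1), this produces the pointwise linear bound $0 < \overline u'_\epsilon(r) \leq r/(2\epsilon^2)$ on $(0,1)$, which controls $\parallel\overline u'_\epsilon\parallel_{C^0}$. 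A second integration, using the boundary value $\overline u_\epsilon(1)=0$ and the sign $\overline u_\epsilon < 0$, gives
\begin{equation*}
|\overline u_\epsilon(r)| = \int_r^1 \overline u'_\epsilon(s)\,ds \leq \int_r^1 \frac{s}{2\epsilon^2}\,ds \leq \frac{1}{4\epsilon^2},
\end{equation*}
controlling $\parallel\overline u_\epsilon\parallel_{C^0}$. Finally, rearranging the ODE as $\overline u''_\epsilon = F - r^{-1}\overline u'_\epsilon$ and using $|F| \leq \epsilon^{-2}$ together with $|r^{-1}\overline u'_\epsilon| \leq 1/(2\epsilon^2)$ yields $\parallel\overline u''_\epsilon\parallel_{C^0} \leq 3/(2\epsilon^2)$, matching the stated bound exactly.

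The main subtlety is the $r^{-1}\overline u'_\epsilon$ term near the origin, which is harmless because radial smoothness forces $r^{-1}\overline u'_\epsilon(r) \to \overline u''_\epsilon(0)$ as $r \to 0$, and the linear bound $\overline u'_\epsilon(r) \leq r/(2\epsilon^2)$ supplies uniform pointwise control through the origin. The only real obstacle I foresee is matching the sharper $1/(2\epsilon)$-type constants stated for $\parallel\overline u_\epsilon\parallel_{C^0}$ and $\parallel\overline u'_\epsilon\parallel_{C^0}$ (as opposed to the $O(\epsilon^{-2})$ estimates produced directly above); to recover them, one can combine the linear bound with the monotonicity bound $r\overline u'_\epsilon(r) \leq \overline u'_\epsilon(1) \leq 1$, which itself follows from $\overline u_\epsilon(r) \geq \ln r$ at the boundary via the difference quotient argument used for (\ref{lemma2}). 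This yields the interpolated estimate $\overline u'_\epsilon(r) \leq \min(r/(2\epsilon^2),\,1/r)$, and balancing the two regimes at $r \approx \sqrt{2}\epsilon$ recovers the advertised $\epsilon^{-1}$-type bounds.
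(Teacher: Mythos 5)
Your proposal is essentially correct and follows the same overall strategy as the paper, with one small variation worth noting and one subtlety you already spotted. For part (1), the first two inequalities use exactly the paper's ingredient, namely $\ln r<\overline u_\epsilon<0$ from Proposition \ref{prop} and Lemma \ref{lemm1}. For the third inequality your argument ($r\,e^{-\overline u}<1$ directly from $\overline u>\ln r$) is cleaner than the paper's, which instead computes $\frac{d}{dr}F_2=-2\epsilon^{-2}(1-r\overline u'_\epsilon)e^{-\overline u_\epsilon}<0$ (via $\overline u'_\epsilon<r^{-1}$) and evaluates $F_2$ at the endpoints $r=0,1$; both work, yours is shorter. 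For part (2), the derivation $0<\overline u'_\epsilon(r)<r/(2\epsilon^2)$ from integrating $(r\overline u')'=rF<r\epsilon^{-2}$ and the identity $\overline u''_\epsilon=F-r^{-1}\overline u'_\epsilon$ are exactly as in the paper. The gap you flag yourself is real: a bare second integration gives only $|\overline u_\epsilon(0)|\le 1/(4\epsilon^2)$, which is $O(\epsilon^{-2})$, not $O(\epsilon^{-1})$. Your repair, combining the linear bound with $\overline u'_\epsilon<1/r$ (the second half of \eqref{1000}) and splitting the domain near $r\sim\epsilon$, is precisely what the paper does — it just does so at the outset rather than as a patch, concluding $\overline u'_\epsilon<1/(2\epsilon)$ on $r\le\epsilon$ and $\overline u'_\epsilon<1/\epsilon$ on $r>\epsilon$, then bounds $|\overline u_\epsilon(0)|=\int_0^1\overline u'_\epsilon$. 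Note that neither your balanced estimate at $r\approx\sqrt 2\epsilon$ nor the paper's piecewise one literally recovers the constant $\tfrac{1}{2}$; both yield $\parallel\overline u'_\epsilon\parallel_{C^0}\lesssim\epsilon^{-1}$ with a slightly larger constant, which is all that the downstream use in Proposition \ref{1006} actually requires.
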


\begin{proof}
By Proposition \ref{prop} and the first two inequalities in Lemma
\ref{lemm1}, we have
\begin{equation}\label{171271}
\ln r <\overline u_\epsilon(r)<0\quad\and\quad
0<\overline
u'_\epsilon(r)<r^{-1}.
\end{equation}
Hence the first two inequalities in (1) are valid
and   the derivative of
$rF_2$ in $r$
satisfies
\begin{equation*}
(rF_2)'=-{\epsilon^{-2}}(1-r\overline
u_\epsilon'(r))e^{-\overline u_\epsilon}<0,
\end{equation*}
which implies $rF_2$ is strictly decreasing in $r$ and hence the third one in (1)
follows.

As to the inequalities in (2), we first rewrite equation (\ref{420}) as $(r\overline u'_\epsilon(r))'=rF$.
Then by the first inequality in (1) we have
\begin{equation*}
0<(r\overline u'_\epsilon(r))'< {r}{\epsilon^{-2}}.
\end{equation*}
Integrating over $[0,r]$ gives
\begin{equation}\label{1001}
0<\overline u_\epsilon'(r)<\frac r {2\epsilon^2}.
\end{equation}
Hence when $r\in ]0,\epsilon]$, $0<\overline u'_\epsilon(r)<(2\epsilon)^{-1}$; while by the second inequality in (\ref{171271}),
when $r\in ]\epsilon,1[$, $0<\overline u_\epsilon'(r)<r^{-1} <\epsilon^{-1}$.  Consequently,  the second
inequality in (2) holds and the first one can be derived as follows:
\begin{equation*}
0<-\overline u_\epsilon(0)=\overline u_\epsilon(1)-\overline
u_\epsilon(0)=\int_0^1\overline u_\epsilon'(r)dr\leq \epsilon^{-1}.
\end{equation*}
Finally we rewrite equation (\ref{420}) as $\overline u''_\epsilon=F-r^{-1}\overline u'_\epsilon$. Then the first inequality in (1) and (\ref{1001}) imply $-\epsilon^{-2}/2\leq \overline u''_\epsilon(r)\leq \epsilon^{-2}$ which implies the third one in (2).
\end{proof}

For any $R<1$ and nonnegative integer $k$, denote $R_k=R+\frac{1-R}{k+1}$ and for simplicity denote $B_{R_k}(0)$ by $B_k$.
By the method in \cite[p.273-275]{jost}, we have the following inequalities.

\begin{lemm}\label{20180708}
For any $R<1$ and integer $k\geq 2$, there exists a constant $C$ depending on $R$ and $k$ such that
\begin{equation}\label{20180705}
\int_{B_k}\mid\!\bigtriangledown^k \overline u_\epsilon\!\mid^2\leq 3\int_{B_{k-1}}\mid\!\bigtriangledown^{k-2}\bigtriangleup\overline u_\epsilon\!\mid^2
+C\int_{B_{k-1}}\mid\!\bigtriangledown^{k-1}\overline u_\epsilon\!\mid^2.
\end{equation}
\end{lemm}

\begin{proof}
Let $\chi_k(r)$ be a cut-off function as in \cite{jost}  with
\begin{equation*}
\begin{aligned}
&0\leq \chi_k\leq 1,\\
&\chi_k (r)=1\ \ \textup{for}\ r\in (0,R_k),\\
&\chi_k(r)=0\ \ \textup{for}\ r\in (R_{k-1},1),\ \ \textup{and}\\
&|\!\bigtriangledown \chi_k|\leq \frac{2k(k+1)}{1-R}.
\end{aligned}
\end{equation*}
Since $\chi_k$ has a compact support in $B_{k-1}$, for a smooth function $f$ defined on $B_1(0)$,
by using the integration by parts and Stokes' theorem  we get
\begin{equation*}
\begin{aligned}
\int_{B_{k-1}}\chi_k^2\mid\!\bigtriangledown^2 f\!\mid^2=&\int_{B_{k-1}}\chi_k^2(\bigtriangleup f)^2+2\int_{B_{k-1}}\chi_k\bigtriangleup f\sum_{j=1}^2\frac{\partial\chi_k}{\partial x_j}\frac{\partial f}{\partial x_j}\\
&-2\int_{B_{k-1}}\chi_k\sum_{i,j=1}^2\frac{\partial\chi_k}{\partial x_i}\frac{\partial f}{\partial x_j}\frac{\partial^2 f}{\partial x_i\partial x_j}.
\end{aligned}
\end{equation*}
By the interpolation inequality we have
\begin{equation*}
\int_{B_{k-1}}\chi_k^2\mid\!\bigtriangledown^2 f\!\mid^2\leq \frac 3 2\int_{B_{k-1}}\chi_k^2(\bigtriangleup f)^2+\frac 12\int_{B_{k-1}}\chi_k^2\mid\!\bigtriangledown^2 f\!\mid^2+\frac 1 2C\int_{B_{k-1}}\mid\!\bigtriangledown f\!\mid^2
\end{equation*}
or
\begin{equation}\label{171275}
\int_{B_{k-1}}\chi_k^2\mid\!\bigtriangledown^2 f\!\mid^2\leq 3\int_{B_{k-1}}\chi_k^2(\bigtriangleup f)^2
+C\int_{B_{k-1}}\mid\!\bigtriangledown f\!\mid^2,
\end{equation}
where $C=8\parallel\!\bigtriangledown\chi_k\!\parallel_{C^0}$ depends on $R$ and $k$.

When $k\geq 2$, we write
\begin{equation*}
\begin{aligned}
\int_{B_{k-1}}\chi_k^2\mid\!\bigtriangledown^k \overline u_\epsilon\!\mid^2=&\int_{B_{k-1}}\chi_k^2\sum_{i_1,\cdots, i_k=1}^2\Bigl(\frac{\partial^k \overline u_\epsilon}
{\partial x_{i_1}\cdots\partial x_{i_k}} \Bigr)^2\\
=&\sum_{i_3,\cdots,i_k=1}^2\int_{B_{k-1}}\chi_k^2\,\Bigl|\bigtriangledown^2\Bigl(\frac{\partial^{k-2}\overline u_\epsilon}{\partial x_{i_3}\cdots\partial x_{i_k}}\Bigr)\Big|^2.
\end{aligned}
\end{equation*}
Then using inequality (\ref{171275}) for $f=\frac{\partial^{k-2}\overline u_\epsilon}{\partial x_{i_3}\cdots\partial x_{i_k}}$ yields
\begin{equation*}
\int_{B_{k-1}}\chi_k^2\mid\!\bigtriangledown^k \overline u_\epsilon\!\mid^2\leq 3\int_{B_{k-1}}\chi_k^2\mid\!\bigtriangledown^{k-2}\bigtriangleup\overline u_\epsilon\!\mid^2
+C\int_{B_{k-1}}\mid\!\bigtriangledown^{k-1}\overline u_\epsilon\!\mid^2,
\end{equation*}
which implies inequality (\ref{20180705}).
\end{proof}


\begin{prop}\label{20180902}
For any $R<1$ and positive integer $k$ there exists a constant $C$ depending on $R$ and $k$ such that
for sufficiently small $\epsilon>0$,
\begin{equation}\label{20180707}
\parallel\!\overline u_\epsilon\!\parallel_{C^k(B_{k+2})}\leq C\epsilon^{-3k+2}.
\end{equation}
\end{prop}

\begin{proof}
By the second inequality in (2) of Lemma \ref{1005}, we have
\begin{equation*}
\parallel\!\bigtriangledown \overline u_\epsilon\!\parallel_{C^0(B_1)}\leq \parallel\!\overline u_\epsilon'(r)\!\parallel_{C^0(B_1)}\parallel\!\bigtriangledown r\!\parallel_{C^0(B_1)}\leq C\epsilon^{-1}.
\end{equation*}
Combined with the first inequality in (2) of Lemma \ref{1005}, we find that inequality (\ref{20180707})  for $k=1$ holds.

We use the inductive method to prove the proposition. Assume that (\ref{20180707}) holds for any $k\leq m$. We will use the Sobolev inequality to prove that it  also holds for $k=m+1$.
To this end, we will use the inequality in Lemma \ref{20180708} to prove
\begin{equation}\label{20180801}
\parallel\!\bigtriangledown^{m+l}\overline u_\epsilon\!\parallel_{L^2(B_{m+l})}\leq C\epsilon^{-3(m+1)+5-l} \quad \textup{for\ \ $l=1,\,2,\,3$}.
\end{equation}
Here the constant $C$ depends only on $R$ and $m$ and will be used in the generic sense. Assume that the positive $\epsilon$ is small enough.
By (\ref{171273}) we write
\begin{equation*}
\begin{aligned}
\bigtriangledown^{m+l-2}\bigtriangleup\overline u_\epsilon=&\sum (FC_1(i_1\cdots i_j)+F_1C_2(i_1\cdots i_j))
\mathcal F(i_1\cdots i_j)\\
& +\sum F_2C_3(i_1\cdots i_j)\bigtriangledown\!r^2\ \mathcal G_1(i_1\cdots i_j)\\
&+\sum F_2C_4(i_1\cdots i_j)\bigtriangledown^2\!r^2\ \mathcal G_2(i_1\cdots i_j),
\end{aligned}
\end{equation*}
where $C_1,\,C_2,\,C_3$ and $C_4$ are constants only depending on $m$, $\mathcal F(i_1\cdots i_j)$, $\mathcal G_1(i_1\cdots i_j)$ and $\mathcal G_2(i_1\cdots i_j)$ are used to denote
\begin{equation*}
\bigtriangledown^{i_1}\overline u_\epsilon\cdots\bigtriangledown^{i_j}\overline u_\epsilon, \quad \textup{where\ \
$i_1\geq \cdots\geq  i_j\geq 1$}
\end{equation*}
and $i_1+\cdots+ i_j$, respectively,  equals to $m+l-2,\,m+l-3$ and $m+l-4$.
Hence by the inequalities in (1) of Lemma \ref{1005}, we have
\begin{equation}\label{20181001}
\parallel\!\bigtriangledown^{m+l-2}\bigtriangleup\overline u_\epsilon\!\parallel_{L^2}\leq C\epsilon^{-2}\sum \parallel\!\mathcal  F\!\parallel_{L^2}+C\epsilon^{-2}\sum \parallel\!\mathcal G_1\!\parallel_{L^2}+C\sum\parallel\!F_2\mathcal G_2\!\parallel_{L^2}
\end{equation}
where the domain of integration is $B_{m+l-1}$ which has been  omitted.

Now we prove inequalities (\ref{20180801}).
The proof of the cases where $l=1$ and $l=2$ are easier than of the case where $l=3$. Hence we prove the case where $l=3$ and omit the other two cases.
 We first deal with a term $\mathcal F=\mathcal F(i_1\cdots i_j)$ where $i_1+\cdots i_j=m-1$. If $j=1$, then $\mathcal F=\mathcal F(m+1)=\bigtriangledown^{m+1}\overline u_\epsilon$.
Hence by (\ref{20180801}) for $l=1$, we have
\begin{equation}\label{20180806}
\parallel\!\mathcal F\!\parallel_{L^2}=\parallel\!\bigtriangledown^{m+1}\overline u_\epsilon \!\parallel_{L^2}\leq C\epsilon^{-3(m+1)+4}.
\end{equation}If $j\geq 2$, then $i_1,\cdots, i_j$
are less than or equal to $m$. Hence by the inductive assumption, we have
\begin{equation}\label{20180807}
\parallel\!\mathcal F\!\parallel_{L^2}\leq C\parallel\!\bigtriangledown^{i_1}\overline u_\epsilon\!\parallel_{C^0}\cdots
\parallel\!\bigtriangledown^{i_j}\overline u_\epsilon\!\parallel_{C^0}\leq C\epsilon^{-3(m+1)+2j}\leq C\epsilon^{-3(m+1)+4}.
\end{equation}
As the same reason, for a term $\mathcal G_1=\mathcal G_1(i_1\cdots i_j)$, since $i_1+\cdots+i_j=m$,  we have
\begin{equation}\label{20180808}
\parallel\!\mathcal G_1\!\parallel_{L^2}\leq C\epsilon^{-3m+2j}\leq C\epsilon^{-3(m+1)+5};
\end{equation}
and for a term $\mathcal G_2=\mathcal G_2(i_1\cdots i_j)$, since $i_1+\cdots+i_j=m-1$, we have
\begin{equation}\label{20180805}
\parallel\!F_2\mathcal G_2\!\parallel_{L^2}\leq C\parallel\bigtriangledown^{i_1}\overline u_\epsilon\!\parallel_{C^0}\cdots\parallel\!
\bigtriangledown^{i_j}\overline u_\epsilon\!\parallel_{C^0}\parallel\!F_2\!\parallel_{L^2}\leq C\epsilon^{-3(m-1)+2j}\parallel\!F_2\!\parallel_{L^2}.
\end{equation}
However, by the second equation in (\ref{171272}),
\begin{equation*}
\parallel\!F_2\!\parallel_{L^2}^2\leq \int_{B_0}\epsilon^{-4}e^{-2\overline u_\epsilon}dx_1dx_2=2\pi\epsilon^{-4}\int_0^1e^{-2\overline u_\epsilon}rdr.
\end{equation*}
Using the integration by parts we have
\begin{equation*}
\parallel\!F_2\!\parallel_{L^2}^2\leq \pi\epsilon^{-4}e^{-2\overline u_\epsilon}r^2|_{0}^1+2\pi\epsilon^{-4}\int_0^1e^{-2\overline u_\epsilon}r^2\overline u_\epsilon'(r)dr\leq C\epsilon^{-5},
\end{equation*}
where the last inequality follows by  the third inequality in (1) and the second one in (2) of Lemma \ref{1005}.
Combined with (\ref{20180805}), we have
\begin{equation}\label{20180809}
\parallel\!F_2\mathcal G_2\!\parallel_{L^2}\leq C\epsilon^{-3(m+1)+\frac {11}{2}}.
\end{equation}
Therefore  inserting (\ref{20180806})--(\ref{20180808}) and (\ref{20180809}) into (\ref{20181001}), we find
\begin{equation*}
\parallel\!\bigtriangledown^{m+1}\bigtriangleup\overline u_\epsilon\!\parallel_{L^2}\leq C\epsilon^{-3(m+1)+2}.
\end{equation*}
Then inserting the above inequality and (\ref{20180801}) for $l=2$ into the inequality in  Lemma \ref{20180708} for $k=m+3$, we obtain  inequality (\ref{20180801}) for $l=3$.

Now by the Sobolev inequality \cite[P.158]{GT}, inequalities (\ref{20180801}) and the inductive assumptions imply
\begin{equation*}
\parallel\!\overline u_\epsilon\!\parallel_{C^{m+1}(B_{m+3})}
\leq C\sum_{k=0}^{m+3}\parallel\!\bigtriangledown^{k}\overline
u_\epsilon\!\parallel_{L^2(B_{m+3})}\leq C \epsilon^{-3(m+1)+2}.
\end{equation*}
This proves the proposition for $k=m+1$.
\end{proof}

The following lemma will be used in the last section.
\begin{lemm}\label{20180901}
For any integer $p\geq 2$,
\begin{equation*}
\int_{B_0}e^{-p\overline u_\epsilon}dx_1dx_2\leq \pi p!\epsilon^{-p+1}.
\end{equation*}
\end{lemm}
\begin{proof}
As in the above proof, using the integration by parts, we have
\begin{equation*}
\int_{B_0}e^{-p\overline u_\epsilon}dx_1dx_2=2\pi\int_0^1e^{-p\overline u_\epsilon}rdr=\pi p\int_0^1r^2e^{-p\overline u_\epsilon}\overline u_\epsilon' dr.
\end{equation*}
Since $0<r\overline u_\epsilon<1$ and $|\overline u_\epsilon'(r)|<\epsilon^{-1}$, we get
\begin{equation*}
\int_{B_0}e^{-p\overline u_\epsilon}dx_1dx_2\leq \pi p\epsilon^{-1}\int_0^1re^{-(p-1)\overline u_\epsilon}dr.
\end{equation*}
Then the iterated method gives the conclusion of the lemma.
\end{proof}

\section{Construction of a family of Hermitian metrics}
In this section, if $H$ is a Hermitian metric on $V$, we will denote
 the associated  Hermitian connection by $D_H$,  and the curvature forms of $D_H$ with respect to
  $(\hat\mu_1^\alpha,\hat\mu_2^\alpha)$ and
$(\tilde\mu_1^\alpha,\tilde\mu_2^\alpha)$ by $\hat\Theta(H)$ and
$\tilde\Theta(H)$ which are $2\times 2$ matrix valued
2-forms on $\mathcal U_\alpha$.

Following the convention in  Section 2, $\xi_a$ is a branched point
on $B$ and $\xi_j$ is  a point in the support of $D_1$. Let
\begin{equation*}
D=\sum_{a=1}^{n} \xi_a-4\sum_{j=n+1}^{5n/4}\xi_{j}
\end{equation*}
be a new divisor  of degree zero on $B$. Let $G$ be the Green
function of $D$ (cf. \cite[p.339-340]{La}). Its local
expansion near $\xi\lalp$ for $1\leq \alpha\leq 5n/4$ takes the form:
\begin{equation}\label{427}
G(z\lalp)=-c\lalp\log |z_\alpha|+2g_\alpha(z\lalp)
\end{equation}
for the constant $c_\alpha=1$ (resp. $-4$) for $\alpha=a$ (resp. $j$)
and  some harmonic function $g\lalp$. We fix $r_0>0$ small enough so
that $G|_{U_\alpha}$ has the above local expansion.

We now construct a Hermitian metric on $V$ by using the Green function
$G$ and the HYM metrics $h_\epsilon^a$, which are denoted as
$h_\epsilon$ in Section 3. We define $h_0$ to be the metric on
$V|_{\mathcal U_0}$ given by a Hermitian matrix valued function in
$(\hat\mu_1^0,\hat\mu_2^0)$:
\begin{equation*}
\hat h_0=e^{\frac 1 2G}I,
\end{equation*}
where $I$ is the $2\times 2$ identity matrix. In this way, the
ambiguity of choosing $(\hat\mu_1^0,\hat\mu_2^0)$ in Section 2  is
irrelevant.

By (\ref{04}) and the notation in
(\ref{120850}), the Hermitian matrix of $h_0$ in
$(\tilde\mu_1^0,\tilde\mu_2^0)$ is
\begin{equation*}
\tilde{ h}_0=(A_0)^t{\hat h}_0\overline{A}_0.
\end{equation*}
Since $G$ is harmonic, direct calculation as in Section 3 gives
\begin{eqnarray}\label{g1}
\begin{aligned}
\label{432} \tilde\Theta(h_0)=\hat\Theta(h_0)= &-\pi
i\begin{pmatrix}\frac{\partial w_1^\ast(z)}{\partial z}
&0\\0&\frac{\partial w_2^\ast(z)}{\partial z}
\end{pmatrix}dz\wedge d\overline{w}-\pi i\begin{pmatrix}\frac{\partial
\overline{w^\ast_1(z)}}{\partial\overline z} &0\\0&\frac{\partial
\overline{w^\ast_2(z)}}{\partial \overline{z}}
\end{pmatrix} d\overline{z}\wedge d w.
\end{aligned}
\end{eqnarray}
Hence, $h_0$ is a HYM metric on $V|_{\cU_{0}}$. For $n+1\leq j\leq
5n/4$, because of (\ref{120810}),  $h_0|_{\cU_j\cap\mathcal U_0}$ in the frame
$(\hat\mu^j_1,\hat\mu^j_2)$ is given by
the matrix valued function
\begin{equation*}
\label{429} \hat h_j=e^{g_j}I.
\end{equation*}
In this way, $h_0$ extends to a smooth metric on $V|_{\cU_j}$.
However,  because of (\ref{120811}),  $h_0|_{\mathcal U_a\cap\mathcal U_0}$ in
the frame $(\hat\mu^a_1,\hat\mu^a_2)$ has the form
\begin{equation*}
\label{430}
 \hat h_a=e^{g_a(z_a)}
\begin{pmatrix}|z_a|^{-\frac{1}{2}}&0\\0&|z_a|^{\frac{1}{2}}\end{pmatrix}.
\end{equation*}
Clearly, $h_0$ can not extend to $V|_{T^\ast\times \{\xi_a\}}$. As
stated  in Section 3, we found a new HYM metric $h_\epsilon^a$ of
$V|_{\cU_a}$ which  in $(\hat\mu^a_1,\hat\mu^a_2)$
has the form
\begin{equation*}
\label{431} \hat h_\epsilon^a=
\begin{pmatrix}e^{-u_\epsilon} &0\\0&e^{u_\epsilon}
\end{pmatrix},
\end{equation*}
where  $u_\epsilon$ is the solution to  equation (\ref{418}). Let
$h_{a,\epsilon}=e^{g_a} h^a_\epsilon$. Then $h_{a,\epsilon}$ is also
a HYM metric on $V|_{\cU_a}$.

We then interpolate  the two metrics $h_0$ and $h_{a,\epsilon}$ over
$\cU_a$. Let
\begin{equation*}
\rho:]0,(2r_0)^2[\rightarrow [0,1]
\end{equation*}
be a fixed $C^\infty$ cut-off function with $\rho(r^2)=1$ for
$r<r_0$, $\rho(r^2)=0$ for $r\geq \frac 43 r_0$. We  define on
$V|_{\mathcal U_a}$
\begin{equation*}
\bh_\epsilon|_{\cU_a}=(1-\rho(|z_a|^2))h_0+\rho(|z_a|^2)
h_{a,\epsilon}.
\end{equation*}
This is a smooth Hermitian metric on $V|_{\cU_a}$ that coincides
with $h_0$ for $|z_a|\geq\frac 43 r_0$ and coincides with
$h_{a,\epsilon}$ for $|z_a|\leq r_0$. After working this out for all
branched points, we obtain a global Hermitian metric $\bh_\epsilon$
that is $h_0$ on $V|_{ X-\cup_1^n \cU_a(\frac 43 r_0)}$ and
$h_{a,\epsilon}$ on $V|_{\cU_{a}(r_0)}$. Here, we denote by
$\cU_{a}(r)$ the pre-image in $ X$ of $U_a(r)$,  which is the disc
in $B$ with  center $\xi_a$ and radius $r$.   From now on we denote
$U_0=B-\bigl(D_1\cup(\cup_{a=1}^nU_a(\frac 3 2 r_0)\bigr)\bigr)$ and
$\mathcal U_0=T^2\times U_0$. Then $\mathcal U_0$, $\mathcal U_i$
and $\mathcal U_a$ still form a cover of $X$. We take the
corresponding trivialization of $V$ for this cover.

Hence, over $\mathcal U_0$ and $\mathcal U_j$,
$\hat\Theta(\bh_\epsilon)=\hat\Theta(h_0)$. Over $\cU_a$, direct
calculation as in Section 3 gives
\begin{equation}\label{433}
\begin{aligned}
\hat\Theta(\bh_\epsilon)=&-\begin{pmatrix}\frac{\partial^2 \phi_1}{\partial z_a\partial
\overline{z_a}} &0\\0&\frac{\partial^2\phi_2}{\partial z_a\partial
\overline {z_a}}
\end{pmatrix}dz_a\wedge d\overline{z_a}+\pi^2\bl r^2\kappa^{-4}-\kappa^4\br\begin{pmatrix}1&0\\0&-1
\end{pmatrix}dw\wedge d\overline{w}\\
&
-\pi i\begin{pmatrix}0&1-z_a\frac{\partial(4\ln\kappa)}{\partial
z_a}
\\ \frac{\partial(4\ln\kappa)}{\partial z_a}&0
\end{pmatrix}dz_a\wedge d\overline{w}\\
&-\pi
i\begin{pmatrix}0&\kappa^4\frac{\partial(4\ln\kappa)}{\partial
\overline{z_a}}
\\ \kappa^{-4}(1-\overline{z_a}\frac{\partial(4\ln\kappa)}{\partial \overline{z_a}})&0
\end{pmatrix}d\overline{z_a}\wedge dw,
\end{aligned}
\end{equation}
where $r=|z_a|$,
\begin{equation}\label{0000}
\phi_1=\ln\bl(1-\rho)r^{-\frac{1}{2}}+\rho
e^{-u_\epsilon}\br,\quad \phi_2=\ln\bl(1-\rho)r^{\frac{1}{2}}+\rho
e^{u_\epsilon} \br
\end{equation}
and
\begin{equation}\label{1003}
\kappa=e^{\frac 1 4 (\phi_2-\phi_1)}.
\end{equation}
Notice that when restricted on $\mathcal U_a(r_0)$, $\phi_1=-u_\epsilon$, $\phi_2=u_\epsilon$, $\kappa^4=e^{2u_\epsilon}$. Hence in this case $\hat\Theta(\mathbf h_\epsilon)$ in (\ref{433}) is indeed equal to $\hat\Theta(h_\epsilon)$ in (\ref{171225}).
Also notice  that near the boundary of $\cU_a$, $\phi_1$ and
$\phi_2$ reduces to $-\frac{1}{2}\ln r$ and $\frac{1}{2}\ln r$,
respectively, and hence $\phi_1+\phi_2$ vanishes. Thus, $\phi_1+\phi_2$ can be viewed as a function of $X$ by defining it to be zero
on $X-\cup_1^n \mathcal U_a$.  This convention will be used in the
following normalization.

The metric $\bh\lep$ should be modified conformally. From (\ref{433})
we have
\begin{equation*}
\label{437}
\Tr\bigl(\frac i 2\Lambda_{\omega_\epsilon}\hat\Theta(\bh_\epsilon)\bigr)=-\epsilon\frac{\partial^2(\phi_1+\phi_2)}{
\partial z_a \partial\overline{z_a}}.
\end{equation*}
To make it vanish, we normalize $\bh_\epsilon$ conformally by
the factor $e^{-\frac 1 2(\phi_1+\phi_2)}$:
\begin{equation*}
\label{439} H_{0,\epsilon}=e^{-\frac 1
2(\phi_1+\phi_2)}\cdot \bh_\epsilon.
\end{equation*}
Hence
\begin{equation}\label{171011}
\hat\Theta(H_{0,\epsilon})=\frac 1 2\partial\overline\partial(\phi_1+\phi_2) I+\hat\Theta(\mathbf h_\epsilon).
\end{equation}
Consequently
\begin{equation}\label{440}
\Tr(\Lambda_{\omega_\epsilon}\hat\Theta(H_{0,\epsilon}))=0.
\end{equation}
Moreover, by the construction, over $\mathcal U_0$, $\mathcal U_j$ and $\mathcal U_a(r_0)$,
$\Lambda_{\omega_\epsilon}{\hat\Theta}(H_{0,\epsilon})=0$, and over $\mathcal U_a-\mathcal U_a(r_0)$,
\begin{equation}\label{444}
\frac i 2 \Lambda_{\omega_\epsilon}{\hat\Theta}(H_{0,\epsilon})=\psi
\left(\begin{array}{cc} 1&0\\
0&-1\end{array}\right)
\end{equation}
for the function
\begin{equation}\label{02}
\psi=\frac{1}{\epsilon}\pi^2\bl r^2\kappa^{-4}-\kappa^4\br
+\frac{\epsilon}{2}\frac{\partial^2(4\ln\kappa)}{\partial
z_a\partial\overline{z_a}}.
\end{equation}
Clearly, by definition (\ref{1003}) and equation (\ref{417}), $\psi$ is zero near the boundary of $\cU_a-\cU_a(r_0)$.
Hence $\psi$ can be defined on whole $X$ by zero extension. After this, (\ref{444}) holds on whole $X$.

Definitions (\ref{0000}) and (\ref{1003}) give
\begin{equation*}
\kappa^{-4}=e^{\phi_1-\phi_2}=r^{-1}\phi
\end{equation*}
for
\begin{equation}\label{171018}
\phi=\frac{1+\rho(e^{-(u_\epsilon-\frac 1 2 \ln
r)}-1)}{1+\rho(e^{u_\epsilon-\frac 1 2 \ln
r}-1)}=1+O\bigl(u_\epsilon-\frac 1 2 \ln r\bigr).
\end{equation}
 Since $\ln r$ is
harmonic, we get
\begin{equation*}
\psi=\frac 1 \epsilon \pi^2 r (\phi-\phi^{-1})-\frac \epsilon 2
\frac{\partial^2\ln \phi}{\partial z_a\partial\overline{z_a}}.
\end{equation*}
Then by estimate (1) in Theorem \ref{solution},  the
function $\psi$ satisfies that, for any positive integer $l$ and any
nonnegative integer $k$ with $ l>k$, there is a constant
$C=C(r_0,l,k)$ such that  for any $0<\epsilon<1/8$,
\begin{equation*}
\parallel\! \psi\!\parallel_{C^k([r_0,2r_0])}\leq
C\epsilon^{l-k-1}.
\end{equation*}
So
\begin{equation}\label{445}
\parallel\!\psi\!\parallel_{C^k(X)}\leq C\epsilon^{l-k-1}.
\end{equation}
Therefore, we immediately have
\begin{equation}\label{800}
\parallel\!\Lambda_{\omega_\epsilon}\hat\Theta(H_{0,\epsilon})\!\parallel_{C^k}
\leq C\epsilon^{l-k-1}.
\end{equation}
Since by (\ref{a8}),
$\tilde\Theta(H_{0,\epsilon})=A_\alpha^{-1}\hat\Theta(H_{0,\epsilon})A_\alpha$,
and $A_\alpha$ does not depend on $\epsilon$, we also have
\begin{equation}\label{120960}
\parallel\!\Lambda_{\omega_\epsilon}\tilde\Theta(H_{0,\epsilon})\!\parallel_{C^k}
\leq C\epsilon^{l-k-1}.
\end{equation}

Finally, by the construction, $(\hat\mu_1^\alpha,\hat\mu_2^\alpha)$
is orthogonal for $H_{0,\epsilon}$. It can be normalized to a
unitary frame $(\check \mu_1^\alpha,\check\mu_2^\alpha)$:
\begin{equation}\label{120961}
(\hat\mu_1^\alpha,\hat\mu_2^\alpha)=(\check\mu_1^\alpha,\check\mu_2^\alpha)N_\alpha,
\end{equation}
where
\begin{equation}\label{801}
N_0=e^{\frac 1 4 G}I,\quad N_j=e^{\frac 1 2 g_j}I, \quad\textup{and}\quad
N_a=e^{\frac 12 g_a}\begin{pmatrix}\kappa^{-1} &0\\
0& \kappa \end{pmatrix}.
\end{equation}
Combining (\ref{120961}) with (\ref{120850}) yields
\begin{equation}\label{802}
(\tilde\mu_1^\alpha,\tilde\mu_2^\alpha)=(\check\mu_1^\alpha,\check\mu_2^\alpha)B_\alpha\qquad\textup{for}\ \
B_\alpha=N_\alpha A_\alpha.
\end{equation}
If $\check\Theta(H_{0,\epsilon})$ denotes the curvature form of
$D_{H_{0,\epsilon}}$ with respect to
$(\check\mu_1^\alpha,\check\mu_2^\alpha)$, then (\ref{a8}) gives
\begin{equation}\label{a7}
\check\Theta(H_{0,\epsilon})=N_\alpha\hat\Theta(H_{0,\epsilon})N_\alpha^{-1}.
\end{equation}
Hence, as $N_\alpha$ and
$\Lambda_{\omega_\epsilon}\hat\Theta(H_{0,\epsilon})$ are  diagonal
matrices, we have
\begin{equation}\label{900}
\Lambda_{\omega_\epsilon}\check\Theta(H_{0,\epsilon})=\Lambda_{\omega_\epsilon}\hat\Theta(H_{0,\epsilon}).
\end{equation}
Thus, by (\ref{800}), we get the following proposition, which implies Theorem \ref{t0} if we replace $l-k-1$ by $l$.

\begin{prop}\label{1209064}
For any positive integer $l$ and nonnegative positive integer $k$
satisfying $l>k$, there is a constant $C=C(r_0,l,k)$ such that for
any $0<\epsilon<1/8$,
\begin{equation*}
\label{443} \parallel\!
\Lambda_{\omega_\epsilon}{\check\Theta}(H_{0,\epsilon})\!\parallel_{C^k}<C\epsilon^{l-k-1}.
\end{equation*}
\end{prop}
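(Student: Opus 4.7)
The plan is to reduce the proposition to a pointwise bound on a single scalar function $\psi$ that lives only on the thin annular interpolation region near each branch point, and then to cash that in using the radial ODE estimates already packaged in Theorem \ref{solution}. First I would invoke (\ref{900}): since each gauge-change $N_\alpha$ in (\ref{801}) is diagonal and $\Lambda\hat\Theta(H_{0,\epsilon})$ is diagonal wherever it is nonzero (see (\ref{444})), the conjugation in (\ref{a7}) leaves it unchanged, so it suffices to estimate $\|\Lambda\hat\Theta(H_{0,\epsilon})\|_{C^0(\mathcal{U}_\alpha)}$.

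Next I would localize. Over $\mathcal{U}_0$ and $\mathcal{U}_j$ the metric $H_{0,\epsilon}$ agrees with $h_0$, which by (\ref{g1}) is HYM because $G$ is harmonic; over $\mathcal{U}_a(r_0)$ the metric agrees, up to a conformal factor chosen so $\Tr\Lambda\hat\Theta=0$, with $h_{a,\epsilon}$, which is HYM by the very choice of $u_\epsilon$ as a solution of (\ref{417}). So the only region where $\Lambda\hat\Theta(H_{0,\epsilon})$ is nonzero is the annulus $\mathcal{U}_a\setminus\mathcal{U}_a(r_0)$, where $r_0\le |z|\le \tfrac{4}{3}r_0$, and there it equals $\psi\cdot\diag(1,-1)$ with $\psi$ given by (\ref{02}). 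The goal is reduced to showing $\|\psi\|_{C^0}\le C\epsilon^{l-1}$ on this annulus.

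The key algebraic step is to substitute $u_\epsilon=\tfrac12\ln r+v_\epsilon$ in the definitions of $\phi_1$ and $\phi_2$, which gives
\begin{equation*}
\phi_1=-\tfrac12\ln r+\ln\bl(1-\rho)+\rho e^{-v_\epsilon}\br,\qquad
\phi_2=\tfrac12\ln r+\ln\bl(1-\rho)+\rho e^{v_\epsilon}\br.
\end{equation*}
Hence $\phi_1-\phi_2=-\ln r+F(v_\epsilon,\rho)$, where $F$ is a smooth function vanishing at $v_\epsilon=0$. The first term of $\psi$ factors as
\begin{equation*}
\epsilon^{-1}\pi^2\,r\,\Bigl[\tfrac{(1-\rho)+\rho e^{-v_\epsilon}}{(1-\rho)+\rho e^{v_\epsilon}}-\tfrac{(1-\rho)+\rho e^{v_\epsilon}}{(1-\rho)+\rho e^{-v_\epsilon}}\Bigr],
\end{equation*}
and the bracket vanishes linearly in $v_\epsilon$. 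By Theorem \ref{solution}(1), $\|v_\epsilon\|_{C^0([r_0,2r_0])}\le C\epsilon^l$ for any prescribed $l$, so this term is $O(\epsilon^{l-1})$. In the second term, $\ln r$ is harmonic away from $0$, so only $F(v_\epsilon,\rho)$ contributes to $\partial\bar\partial(\phi_1-\phi_2)$; since $\rho$ is an $\epsilon$-independent cutoff and $\psi$ is radial, the resulting expression is a smooth function of $v_\epsilon$, $v_\epsilon'$, $v_\epsilon''$ and derivatives of $\rho$. Applying Theorem \ref{solution}(1) once more bounds $|v_\epsilon|+|v_\epsilon'|+|v_\epsilon''|$ by $C\epsilon^{l-2}$, and the prefactor $\epsilon/2$ in front brings the contribution to $C\epsilon^{l-1}$ as well.

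The only genuine obstacle is the bookkeeping for the second term: one must check that each $r$-derivative that falls on $v_\epsilon$ costs exactly one power of $\epsilon$, which is precisely the content of Theorem \ref{solution}(1). Once that is in place, combining the two estimates yields $|\psi|\le C(l,r_0)\epsilon^{l-1}$ on the annular region, vanishing elsewhere, and (\ref{900}) transports this to $\check\Theta$, completing the proof.
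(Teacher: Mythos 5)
Your proof is correct and follows the same route as the paper: reduce to $\hat\Theta$ via (\ref{900}), observe that $\Lambda\hat\Theta(H_{0,\epsilon})$ vanishes off the annuli $\cU_a\setminus\cU_a(r_0)$, and estimate $\psi$ there using Theorem \ref{solution}(1). The paper simply asserts (\ref{445}) "by the first part of Theorem \ref{solution}" without spelling out the substitution $u_\epsilon=\tfrac12\ln r+v_\epsilon$ and the linear-vanishing of the bracket; your write-up supplies exactly those details.
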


\section{Preparation for estimates}

Since  $V$ is stable with respect to the
K\"ahler metric $\omega_\epsilon$, it admits a HYM metric
$H_{1,\epsilon}$ which is unique up to a scale.
As $H_{1,\epsilon}$ and $H_{0,\epsilon}$ are Hermitian metrics on
$V$, there exists an element $H_\epsilon\in A^0(\textup{End}(V))$ such that
\begin{equation*}
H_{1,\epsilon}(\cdot\,,\cdot)=H_{0,\epsilon}(H_\epsilon\cdot\,,\cdot).
\end{equation*}

We will use the following notations.

\renewcommand\arraystretch{1.25}

\[
\begin{tabular}{|c|c|c|c|c|c|}
\hline
&  $H_{1,\epsilon}$ & $\Theta(H_{1,\epsilon})$ & $H_{0,\epsilon}$ & $\Theta(H_{0,\epsilon})$ & $H_\epsilon$\\
\hline
\textup{smooth frames} $(\hat\mu^\alpha_1,\hat\mu^\alpha_2)$ & $\hat H_{1,\epsilon}^\alpha$ & $\hat \Theta_{1,\epsilon}^\alpha$ & $\hat H_{0,\epsilon}^\alpha$ & $\hat\Theta_{0,\epsilon}^\alpha$ & $\hat H_\epsilon^\alpha$\\
\hline
\textup{holomorphic frames} $(\tilde \mu^\alpha_1,\tilde\mu^\alpha_2)$ & $\tilde H_{1,\epsilon}^\alpha$ & $\tilde \Theta_{1,\epsilon}^\alpha$ & $\tilde H_{0,\epsilon}^\alpha$ & $\tilde\Theta_{0,\epsilon}^\alpha$ & $\tilde H_\epsilon^\alpha$\\
\hline
\textup{unitary frames} $(\check\mu^\alpha_1,\check\mu^\alpha_2)$ & $\check H_{1,\epsilon}^\alpha$ & $\check \Theta_{1,\epsilon}^\alpha$ & $\check H_{0,\epsilon}^\alpha$ & $\check\Theta_{0,\epsilon}^\alpha$ & $\check H_\epsilon^\alpha$\\
\hline
\end{tabular}
\]
\renewcommand\arraystretch{0.8}

\noindent Here, for example, $\hat H^\alpha_\epsilon$, $\tilde H^\alpha_\epsilon$ and $\check H^\alpha_\epsilon$ are the resulting matrix representations of $H_\epsilon$ in frames $(\hat\mu^\alpha_1,\hat\mu^\alpha_2)$, $(\tilde \mu^\alpha_1,\tilde\mu^\alpha_2)$, and $(\check\mu^\alpha_1,\check\mu^\alpha_2)$, respectively.
In this section, we often drop
the superscript and subscript $\alpha$ when working with a single frame. Hence the notation $\check \Theta_{0,\epsilon}$ denote $\check\Theta(H_{0,\epsilon})$, etc.

Clearly we have the relations
\begin{equation}\label{803}
\check H_{1,\epsilon}=(\check H_\epsilon)^t\quad  \and \quad \tilde
H_{1,\epsilon}=(\tilde H_\epsilon)^t\cdot \tilde H_{0,\epsilon}\ .
\end{equation}
Since $H_{1,\epsilon}$ is the HYM metric, by
(\ref{b0}) and the second identity in (\ref{803}),  direct
computation as in \cite[p.S264]{UY} yields
\begin{eqnarray}\label{05}
\begin{aligned}
0=&\Lambda_{\omega_\epsilon} \tilde\Theta_{1,\epsilon}=\Lambda_{\omega_\epsilon}
\overline\partial(\partial
\tilde{H}_{1,\epsilon}\cdot(\tilde{H}_{1,\epsilon})^{-1})^t\\
=&\Lambda_{\omega_\epsilon}\overline\partial(\tilde{H}^{-1}_\epsilon\cdot \partial
\tilde{H}_\epsilon)+\Lambda_{\omega_\epsilon}\tilde{H}^{-1}_\epsilon\cdot
\tilde\Theta_{0,\epsilon}\cdot {\tilde H}_\epsilon\\
&-\Lambda_{\omega_\epsilon} \tilde{H}^{-1}_\epsilon\cdot\overline\partial \tilde{
H}_\epsilon\cdot \tilde{H}^{-1}_\epsilon\wedge (\partial
\tilde{H}_{0,\epsilon}\cdot
(\tilde{H}_{0,\epsilon})^{-1})^t\cdot \tilde{H}_\epsilon\\
&-\Lambda_{\omega_\epsilon} \tilde{H}^{-1}_\epsilon\cdot (\partial \tilde{
H}_{0,\epsilon}\cdot (\tilde{H}_{0,\epsilon})^{-1})^t\wedge
\overline\partial \tilde{H}_\epsilon.
\end{aligned}
\end{eqnarray}
Taking the trace of the above system and combining with
$\Tr(\Lambda_{\omega_\epsilon}\tilde\Theta_{0,\epsilon})=0$, which is equivalent to (\ref{440}) by (\ref{a8}), we have
\begin{equation*}
\bigtriangleup_\epsilon \ln\det \tilde{H}\lep=0.
\end{equation*}
Here $\bigtriangleup_\epsilon$ is defined as
\begin{equation}\label{o1}
\bigtriangleup_\epsilon= \epsilon\Bigl(\frac{\partial^2}{\partial
x_1^2}+\frac{\partial^2}{\partial x_2^2}\Bigr)+\frac
1\epsilon\Bigl(\frac{\partial^2}{\partial
y_1^2}+\frac{\partial^2}{\partial y_2^2}\Bigr).
\end{equation}
(Hence, our notation here differs from \cite{Si}.) Thus, $\det \tilde{H}\lep$ is a constant.
We normalize $H_{1,\epsilon}$ so that
\begin{equation*}
\det \tilde{H}\lep=1.
\end{equation*}

On the other hand, from (\ref{05}) we also have
\begin{equation}\label{180101}
\begin{aligned}
\textup{Tr}(\Lambda_{\omega_\epsilon}\tilde\Theta_{0,\epsilon}\cdot\tilde H_\epsilon)=&\Lambda_{\omega_\epsilon}\partial\overline\partial \textup{Tr}\tilde H_\epsilon+\Lambda_{\omega_\epsilon}\textup{Tr}
(\overline \partial \tilde H_\epsilon
\cdot \tilde H_\epsilon^{-1}\wedge\partial \tilde H_\epsilon)\\
+&\Lambda_{\omega_\epsilon}\textup{Tr}((\partial \tilde H_{0,\epsilon}\cdot(\tilde H_{0,\epsilon})^{-1})^t\wedge\overline \partial\tilde H_\epsilon)\\
+&
\Lambda_{\omega_\epsilon}\textup{Tr}(\overline\partial \tilde H_\epsilon\cdot \tilde H_\epsilon^{-1}\wedge (\partial \tilde H_{0,\epsilon}\cdot (\tilde H_{0,\epsilon})^{-1})^t\cdot\tilde H_\epsilon).
\end{aligned}
\end{equation}
Then we have the following inequality (i.e., the inequality (1.9.2) in \cite[p.24]{Siu})
\begin{equation}\label{180102}
\bigtriangleup_\epsilon\ln\textup{Tr}\tilde H_\epsilon\geq -4\mid\!\Lambda_{\omega_\epsilon}\check\Theta_{0,\epsilon}\!\mid.
\end{equation}
Combining it with Proposition \ref{1209064} yields
\begin{equation}\label{20180101}
\bigtriangleup_\epsilon\ln\textup{Tr}\tilde H_\epsilon\geq  -C\epsilon^{l-1}.
\end{equation}

Next we will give the estimate of the Sobolev constant of $X$ with the metric $\omega_\epsilon$.
For a smooth function $f$ on $X$, we use the metric
$\omega_\epsilon$ to define $|df|_\epsilon$ as
\begin{equation}\label{20180109}
|df|_\epsilon^2=\epsilon\Bigl(\Bigl|\frac{\partial f}{\partial
x_1}\Bigr|^2+\Bigl|\frac{\partial f}{\partial x_2}\Bigr|^2\Bigr)+\epsilon^{-1}\Bigl(\Bigr|\frac{\partial
f}{\partial y_1}\Bigr|^2+\Bigl|\frac{\partial f}{\partial y_2}\Bigr|^2\Bigr).
\end{equation}
Note that the $L^p$ norm $\parallel\!f\!\parallel_p$ of $f$ with respect to the volume form
$\frac{\omega_\epsilon^2}2$ is independent of $\epsilon$.

\begin{lemm}\label{sobineq}
 There is a function $I(\epsilon)$ in $\epsilon$
with $I(\epsilon)\geq C\epsilon^{10}$ where $C$ is a constant such
that for any smooth function $f$ on ${X}$,
\begin{equation*}
\parallel\! |df|_\epsilon\!\parallel_2^2\geq
I(\epsilon)(\parallel\! f\!\parallel^2_4-\parallel\!
f\!\parallel^2_2).
\end{equation*}
\end{lemm}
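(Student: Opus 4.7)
The plan is to decompose $f=\bar f + g$ with $\bar f = \int_X f\,dV_{g_\epsilon}$ and $\bar g=0$ (using $\mathrm{vol}(X, g_\epsilon)=1$), reduce the claim to a Poincar\'e--Sobolev inequality $\|g\|_4^2 \le C(\epsilon)\|df\|_2^2$, and then prove the latter with polynomial-in-$\epsilon^{-1}$ constant by exploiting the flatness of $g_\epsilon$ through a covering by small Euclidean balls.

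First I carry out the purely algebraic reduction. Expanding gives $\|f\|_4^4 = \bar f^4 + 6\bar f^2\|g\|_2^2 + 4\bar f\!\int g^3 + \|g\|_4^4$ and $\|f\|_2^4 = \bar f^4 + 2\bar f^2\|g\|_2^2 + \|g\|_2^4$. H\"older bounds $|\int g^3|\le \|g\|_2\|g\|_4^2$, and Young absorbs the mixed product into $2\bar f^2\|g\|_2^2 + 2\|g\|_4^4$, so $\|f\|_4^4-\|f\|_2^4 \le 6\bar f^2\|g\|_2^2 + 3\|g\|_4^4$. Dividing by $\|f\|_4^2+\|f\|_2^2$ and inserting the two elementary lower bounds $\|f\|_4^2+\|f\|_2^2 \ge 2\bar f^2$ (Jensen) and $\|f\|_4^2+\|f\|_2^2 \ge \tfrac12\|g\|_4^2$ (from $\|g\|_4 \le \|f\|_4+|\bar f|$ together with $\bar f^2 \le \|f\|_2^2$) produces
\begin{equation*}
\|f\|_4^2 - \|f\|_2^2 \le C_0\,\|g\|_4^2
\end{equation*}
with a universal constant $C_0$ independent of $\epsilon$.

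For the Poincar\'e--Sobolev step I use that $(X,g_\epsilon)$ is flat with injectivity radius $\sqrt\epsilon/2$ (set by the shortest closed geodesic on the collapsing factor $T$). Covering $X$ by balls of radius $r=\sqrt\epsilon/4$ with bounded multiplicity $M$, each ball is isometric to a Euclidean $4$-ball, so the standard local Sobolev estimate $\|u\|_{L^4(B_r)}^2 \le C\bigl(\|du\|_{L^2(B_r)}^2+r^{-2}\|u\|_{L^2(B_r)}^2\bigr)$ applies. Summing over the cover and using $\ell^4\hookrightarrow\ell^2$ to convert $\sum\|\cdot\|^4_{L^2(B_i)}$ into $(\sum\|\cdot\|^2_{L^2(B_i)})^2$ yields the global bound $\|u\|_4^2 \le C_1\bigl(\|du\|_2^2+\epsilon^{-1}\|u\|_2^2\bigr)$. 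Applied to $u=g$ and combined with the spectral Poincar\'e inequality $\|g\|_2^2 \le (4\pi^2\epsilon)^{-1}\|df\|_2^2$ (the smallest nonzero Laplace eigenvalue of $(X,g_\epsilon)$ is $4\pi^2\epsilon$ for small $\epsilon$, realized on the large factor $B$), this gives $\|g\|_4^2 \le C_2\epsilon^{-2}\|df\|_2^2$. Chaining with the algebraic step produces $I(\epsilon)\ge c\,\epsilon^2$, far stronger than the stated $C\epsilon^{10}$. The main obstacle is the honest $\epsilon$-bookkeeping: both the local Sobolev constant (through $r^{-2}\sim\epsilon^{-1}$) and the global Poincar\'e constant (through $\lambda_1\sim\epsilon$) deteriorate as the $T$-direction collapses, and one must match the covering radius to the minimum geometric scale $\sqrt\epsilon$ in order to keep the compound loss merely polynomial.
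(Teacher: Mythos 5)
Your proof is correct, and it takes a genuinely different route from the paper. The paper follows Gross--Wilson and obtains the Sobolev-type inequality by chaining two citations: P.~Li's lemma (which expresses the Sobolev constant $C_2$ in terms of an isoperimetric constant $C_1$ via $C_2 = D(4)C_0^{1/2}$, $C_0\sim C_1$) and Croke's theorem (which bounds $C_1$ below by a power of the diameter, $C_1\gtrsim \mathrm{diam}(X)^{-20}$); plugging in $\mathrm{diam}(X)\sim 1/\epsilon$ then yields the stated $I(\epsilon)\gtrsim\epsilon^{10}$. You instead work directly from the flat product structure: you split $f=\bar f+g$ with $g$ of mean zero, run the algebraic reduction $\|f\|_4^2-\|f\|_2^2\le C_0\|g\|_4^2$ (your H\"older/Young computation there checks out), and then prove a Poincar\'e--Sobolev bound for $g$ by covering $X$ with Euclidean balls of radius $\sqrt\epsilon/4$ (smaller than the injectivity radius set by the collapsing factor $T$), summing the local scaled Sobolev estimates, and using the explicit spectral gap $\lambda_1(X,g_\epsilon)=4\pi^2\epsilon$ coming from the long factor $B$. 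This is more self-contained --- it avoids the abstract isoperimetric machinery --- and it is also quantitatively sharper, giving $I(\epsilon)\gtrsim\epsilon^2$ rather than $\epsilon^{10}$, which certainly implies the stated bound. The trade-off is that your argument is tied to the explicit flat product geometry (you need to know the injectivity radius scale and $\lambda_1$ exactly), whereas the paper's route via Li and Croke is a one-line plug-in that would work for any flat manifold with diameter control. One small thing to tighten in a final write-up: the passage ``using $\ell^4\hookrightarrow\ell^2$'' should simply say you use $\sum a_i^2\le\bigl(\sum a_i\bigr)^2$ for $a_i\ge 0$ to convert $\sum_i\|u\|_{L^4(B_i)}^4$ into the square of the summed local $W^{1,2}$ energies, and you should note that the bounded multiplicity of the cover is a Euclidean packing constant precisely because the balls are smaller than the injectivity radius and hence lift isometrically to $\mathbb{R}^4$. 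Neither point is a gap.
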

\begin{proof} We shall follow the proof in \cite{Gr}.
Since $X$ has volume one and dimension four,
following the notation of \cite[Lemma 2]{Pli}, for any arbitrary
function $f$ on $ X$, one has
 \begin{equation*}
\parallel\! |df|_\epsilon\!\parallel^2_2\geq D(4)C_2(\parallel\!
f\!\parallel^2_4-\parallel\! f\!\parallel^2_2).
\end{equation*}
Here $D(4)$ is an absolute constant, $C_2=D(4)C^{\frac{1}{2}}_0$ with
$2C_1\geq C_0\geq C_1$ where $C_1$ is
given by the isoperimetric inequality
\begin{equation*}
C_1(\inf\{\vol(M_1),\vol(M_2)\})^3\leq \vol(N)^4
\end{equation*}
with $N$ running  through all codimension one submanifolds dividing
$ X$  into two components $M_1$ and $M_2$. As  ${X}$ is flat and
$\diam({X})=\sqrt{2}\epsilon^{-1}$, \cite[Thm 13]{Cr} implies
 \begin{equation*}
 C_1\geq
 C_4\Bigl(\int_0^{\textup{diam}({X})}r^3dr\Bigr)^{-5}=C_5\epsilon^{20}
 \end{equation*}
for some constants $C_4$ and $C_5$ independent of $\epsilon$. Hence,
$C_0\geq C_5\epsilon^{20}$. In this way, $I(\epsilon)$ can be taken
as
\begin{equation*}
 I(\epsilon)=\inf\{D(4)^2,1\}C_0^{\frac 1 2}\geq C\epsilon^{10}.
 \end{equation*}
\end{proof}

After this, we can use the Morse iteration to prove the following inequality.

\begin{lemm}\label{20180903}
There exists a constant $C$ depending only on $r_0$ and $l$ such that
\begin{equation*}
\sup_{x\in X}\ln\textup{Tr}H_\epsilon (x)\leq (1+C\epsilon^{\frac{l-11}{2}})\int_X\ln\textup{Tr}H_\epsilon(x)\frac{\omega_\epsilon^2}{2!}.
\end{equation*}
\end{lemm}

\begin{proof}
Let $t_\epsilon(x)=2\ln \textup{Tr}H_\epsilon(x)$ and for simplicity we drop the subscript $\epsilon$ of $t_\epsilon$. We rewrite  (\ref{20180101}) as
\begin{equation}-\label{447}
\bigtriangleup_\epsilon t\leq C\epsilon^{l-1},
\end{equation}
where $C$ is a constant depending only on $l$ and $r_0$ which will be used in the generic sense in the following.
Hence we have
\begin{equation*}\label{448}
-\int_{{X}}t^{2p-1}\bigtriangleup_\epsilon t\leq
C\epsilon^{l-1}\int_{{X}}t^{2p-1} \ \ \text{for} \ \ p\geq 1.
\end{equation*}
Here we have omitted the volume form $\frac{\omega_\epsilon^2}{2}$.
(Note that it is independent of $\epsilon$.) Since
 \begin{equation*}
 -\int_{{X}}t^{2p-1}\bigtriangleup_\epsilon t=(2p-1)p^{-2}
\int_{{X}}|d t^{p}|^2_\epsilon,
\end{equation*}
the above inequality implies
\begin{equation*}\label{449}
\int_{{X}}|d t^{p}|_\epsilon^2\leq
 C\epsilon^{l-1}p\int_{{X}}t^{2p-1}.
\end{equation*}
Combined with Lemma \ref{sobineq}, we have
\begin{equation}\lab{20180102}
\parallel \!t\!\parallel^{2p}_{4p}=\parallel\!t^p\!\parallel_4^2\leq \parallel\! t^p\!\parallel_2^2+I(\epsilon)^{-1}\parallel\! |dt^p|_\epsilon\!\parallel_2^2\leq\parallel\! t\!\parallel^{2p}_{2p}+C\epsilon^{l-11}p\int_X t^{2p-1}.
\end{equation}
On the other hand, by H\"older's inequality we have
\begin{equation*}
\int_X t^{2p-1}\leq \Bigl(\int_X t^{2p}\Bigr)^{\frac{2p-1}{2p}}=\parallel\!t\!\parallel_{2p}^{2p-1}\leq \parallel\!t\!\parallel_{2p}^{2p}
\end{equation*}
as $\int_X 1=1$ and $t\geq 2\ln 2>1$.
Hence combined with (\ref{20180102}), we get
\begin{equation*}
\parallel\! t\!\parallel_{4p}^{2p}\leq (1+C\epsilon^{l-11}p)\parallel\! t\!\parallel_{2p}^{2p}.
\end{equation*}
If we set $p=2^m$, then
\begin{equation*}
\label{451}
\parallel\! t\!\parallel^{2}_{2^{m+2}}\leq
(1+C\epsilon^{l-11}2^m)^{\frac{1}{2^m}}\parallel
\!t\!\parallel^{2}_{2^{m+1}}.
\end{equation*}
Iterating this inequality, we obtain
\begin{equation*}
\parallel \!t\!\parallel^{2}_{\infty}\leq\prod_{m=0}^{\infty}
(1+C\epsilon^{l-11}2^m)^{\frac{1}{2^m}}\parallel\!
t\!\parallel^{2}_{2}.
\end{equation*}

It is easy to see that there exist constants $C'$  and $C''$ such that for any sufficiently small $\epsilon>0$,
\begin{equation}\label{20180702}
 \prod_{m=0}^{\infty}
(1+C\epsilon^{l-11}2^m)^{\frac{1}{2^m}}\leq e^{C'\epsilon^{\frac{l-11}{2}}}\leq 1+C''\epsilon^{\frac{l-11}{2}}.
\end{equation}
We denote $C''$ still by $C$.  Hence,
\begin{equation*}\label{453}
\parallel\!t\!\parallel_\infty^2\leq
(1+C\epsilon^{\frac{l-11}{2}})\parallel\!t\!\parallel_2^2
\end{equation*}
which implies
\begin{equation*}
\parallel\! t\!\parallel_{\infty}\leq (1+C\epsilon^{\frac{l-11}{2}})\parallel\!t\!\parallel_1.
\end{equation*}
We finish the proof of the lemma.
\end{proof}

\section{The estimate of the lower bound of the $C^0$-norm}
In this section, we prove the following proposition which implies Theorem \ref{1901004} if $l-6$ is replaced by $l$.

\begin{prop}\label{171097}
For any integer $l>6$, there exists a constant $C$ depending on $l$ and $r_0$ such that for any sufficiently small $\epsilon>0$,
\begin{equation*}
\inf_{x\in X}\tau(x)\leq 2+C\epsilon^{l-6}.
\end{equation*}
\end{prop}

\begin{proof}
We will still drop the superscript and subscript $\alpha$  if it is clear from the context.
We will denote $\tau=\textup{Tr}\check H_\epsilon$. Since $\det \check H_\epsilon=1$, $\tau\geq 2$.

If $\inf_{x\in X}\tau(x)>2$, then at any point $x$ in $X$, one eigenvalue of $\check H_\epsilon(x)$ is $\lambda(x)> 1$ and the other one is $\lambda^{-1}(x)<1$.
Consequently,
the eigenvectors of $\check H_\epsilon(x)$ associated with $\lambda(x)$ form a complex subline bundle $L$ of $V$.
As $c_1(V)=0$, $V$ can be decomposed as the direct sum of $L$ and $L^{-1}$.

We will give a localization of $L$. Fix a $\mathcal U_\alpha$ and denote the $(i,j)$-th entry of  $\check H_\epsilon$ by $h_{i\bar j}$.
As $\lambda+\lambda^{-1}=h_{1\bar 1}+h_{2\bar 2}$,  $h_{1\bar 1}$ or $h_{2\bar 2}$ is less than $\lambda$. Hence, if let
\begin{equation*}
\mathcal U_\alpha'=\{x\in\mathcal  U_\alpha\mid h_{2\bar 2}(x)<\lambda(x)\}\quad\and\quad \mathcal U_\alpha''=\{x\in \mathcal U_\alpha\mid h_{1\bar 1}(x)<\lambda(x)\},
\end{equation*}
then $\mathcal U_\alpha$ is the union of $\mathcal U_\alpha'$ and $\mathcal U_\alpha''$.
If $\mathcal U_\alpha'$ is not empty,  then we define on it a function
\begin{equation*}
\iota'=\bigl((\lambda-h_{2\bar 2})(\lambda-\lambda^{-1})\bigr)^{\frac 1 2}.
\end{equation*}
One can check that  $\iota'^{-1} (\lambda-h_{2\bar 2},h_{2\bar 1})^t$ and $\iota'^{-1}(-h_{1\bar 2},\lambda-h_{2\bar 2})^t$ are two unitary eigenvectors of $\check H_\epsilon$
with eigenvalues $\lambda$ and $\lambda^{-1}$ respectively.  So
\begin{equation*}
\iota'^{-1}((\lambda-h_{2\bar 2})\check \mu_1^\alpha+h_{2\bar 1}\check \mu_2^\alpha)
 \end{equation*}
is a unitary frame of $L|_{\mathcal U_\alpha'}$ with respect to $H_{0,\epsilon}|_L$. Similarly, if $\mathcal U_\alpha''$ is not empty, then we
 also define on it a function
\begin{equation*}
\iota''=\bigl((\lambda-h_{1\bar 1})(\lambda-\lambda^{-1})\bigr)^{\frac 1 2}.
\end{equation*}
Hence
\begin{equation*}
\iota''^{-1}(h_{1\bar 2}\check\mu_1^\alpha+(\lambda-h_{1\bar 1})\check\mu_2^\alpha)
 \end{equation*}
is a  unitary frame of $L|_{\mathcal U_\alpha''}$ with respect to  $H_{0,\epsilon}|_{L}$.
In this way, a localization of $L$ is given.
Since all discussions on $\mathcal U_\alpha'$ and $\mathcal U_\alpha''$ are parallel, we will
concentrate on $\mathcal U_\alpha'$. Hence we will also drop the superscript $'$.

Denote
\begin{equation}\label{171230}
S=\frac 1 {\iota}\left(\begin{array}{cc} \lambda-h_{2\bar 2}& -h_{1\bar 2}\\h_{2\bar 1}& \lambda-h_{2\bar 2}\end{array}\right).
\end{equation}
Then from the above discussions we know that
\begin{equation}\label{171231}
(\mathring\mu_1,\mathring\mu_2)=(\check \mu_1,\check\mu_2)S
\end{equation}
is a unitary frame of $V|_{\mathcal U}$ with respect to $H_{0,\epsilon}$.
Hence, the Hermitian matrix $\mathring H_{0,\epsilon}$ for $H_{0,\epsilon}$ in $(\mathring \mu_1,\mathring\mu_2)$ is the identity matrix.
Denote by $\mathring H_{1,\epsilon}$ the Hermitian matrix for $H_{1,\epsilon}$ in $(\mathring\mu_1,\mathring\mu_2)$.
Since $\check H_{1,\epsilon}=\check H^t$,
by (\ref{04}) and (\ref{171230}) we have
\begin{equation*}
\mathring H_{1,\epsilon}=S^t \check H_{1,\epsilon}\overline S=\Lambda,
\end{equation*}
where $\Lambda$ is a 2-by-2 diagonal matrix whose diagonal entries are $\lambda$ and $\lambda^{-1}$.
Let $\mathring \Theta_{1,\epsilon}$ and $\mathring \Theta_{0,\epsilon}$
be the curvature forms of the Hermitian connections of $H_{1,\epsilon}$ and $H_{0,\epsilon}$ in the frame $(\mathring\mu_1,\mathring\mu_2)$ respectively.
We will drop the subscript $\epsilon$ of  $\mathring\Theta_{1,\epsilon}$ and $\mathring\Theta_{0,\epsilon}$ etc.

De

note $S^{-1}B$ by $T$.  Then combining (\ref{802}) with (\ref{171231}) yields
\begin{equation}\label{170970}
(\tilde \mu_1,\tilde\mu_2)=(\mathring\mu_1,\mathring\mu_2)T.
\end{equation}
Hence by (\ref{04}) again, we have
\begin{align}
&\tilde H_{1,\epsilon}=T^t\mathring H_{1,\epsilon}\overline T=T^t \Lambda \overline T, \label{170930}\\
&\tilde H_{0,\epsilon}=T^t \mathring H_{0,\epsilon}\overline T=T^t\overline  T.\label{170931}
\end{align}
For convenience, denote
\begin{equation*}
\overline\partial T T^{-1}=\left(\begin{array}{cc}\overline s_{11}& \overline s_{12}\\ \overline s_{21}&\overline s_{22}\end{array}\right),
\end{equation*}
where $\overline s_{ij}$ is a $(0,1)$-form on $\mathcal U$.

\begin{lemm}\label{170971}
Let $\mathring\Theta_{1,11}$ and $\mathring\Theta_{0,11}$ be the $(1,1)$-th entries  of $\mathring\Theta_{1}$ and $\mathring\Theta_{0}$ respectively. Then
\begin{align}
&\mathring\Theta_{1,11}=-\partial \overline s_{11}+\overline\partial s_{11}+\lambda^2 s_{12}\wedge\overline s_{12}-\lambda^{-2}s_{21}\wedge\overline s_{21}-\partial\overline\partial\ln \lambda,\label{171260}\\
&\mathring\Theta_{0,11}=-\partial \overline s_{11}+\overline\partial s_{11}+s_{12}\wedge\overline s_{12}-s_{21}\wedge\overline s_{21}.\label{171261}
\end{align}
\end{lemm}

\begin{proof}
Formula (\ref{a8}) combined with (\ref{170970}) yields
\begin{align*}
&\mathring\Theta_{1}=T\tilde\Theta_1 T^{-1}=T\overline\partial(\partial\tilde H_{1,\epsilon} \tilde H_{1,\epsilon}^{-1})^t T^{-1},\\
&\mathring \Theta_0=T \tilde \Theta_0 T^{-1}=T \overline\partial(\partial\tilde H_{0,\epsilon}\tilde H_{0,\epsilon}^{-1})^t T^{-1}.
\end{align*}
Then we use (\ref{170930}) and (\ref{170931}) to expand  the curvature forms $\tilde \Theta_1$ and $\tilde \Theta_0$ respectively. By standard and tedious calculation, we get the conclusions.
\end{proof}

We should understand the term $-\partial\overline s_{11}+\overline\partial s_{11}$ appeared  in (\ref{171260}) and (\ref{171261}).

\begin{lemm}\label{170975}
The first Chern class $c_1(L)$ of $L$ is represented by (the first Chern form of $L$)
\begin{equation}\label{171262}
C_1(L)=\frac{-1}{2\pi i}d(s_{11}-\overline s_{11}).
\end{equation}
\end{lemm}
\begin{proof}
Since $\mathring\mu_1$ and $\mathring\mu_2$ are unitary frames of $(L, H_{0,\epsilon}|_L)$ and $(L^{-1},H_{0,\epsilon}|_{L^{-1}})$ respectively, there exist  real functions $\theta_{\beta\alpha}^1$ and $\theta_{\beta\alpha}^2$ on $\mathcal U_\alpha\cap\mathcal U_\beta$  such that
\begin{equation*}
\mathring\mu_1^\beta=e^{i\theta_{\beta\alpha}^1}\mathring\mu_1^\alpha\quad\and\quad
\mathring\mu_2^\beta=e^{i\theta_{\beta\alpha}^2}\mathring\mu_2^\alpha.
\end{equation*}
Here $\mathcal U_\alpha$ should be replaced by $\mathcal U_\alpha'$ or $\mathcal U_\alpha''$ if necessary. Write
\begin{equation*}
(\tilde \mu_1^\alpha,\tilde\mu_2^\alpha)=(\tilde\mu_1^\beta,\tilde\mu_2^\beta)D_{\alpha\beta}.
\end{equation*}
Then we find
\begin{equation*}
T_\alpha=\left(\begin{array}{cc}e^{i\theta_{\beta\alpha}^1}&0\\
0&e^{i\theta_{\beta\alpha}^2}\end{array}\right) T_{\beta} D_{\alpha\beta}.
\end{equation*}
Since $D_{\alpha\beta}$ are holomorphic, by direct calculation we have
\begin{equation}\label{171240}
\overline\partial T_\alpha T_\alpha^{-1}=i\left(\begin{array}{cc}\overline\partial\theta_{\beta\alpha}^1& 0\\ 0&\overline\partial\theta^2_{\beta\alpha}\end{array}\right)
+\left(\begin{array}{cc}
\overline s_{11}^{\beta}& e^{i(\theta_{\beta\alpha}^1-\theta^2_{\beta\alpha})}\overline s_{12}^\beta\\ e^{i(\theta^2_{\beta\alpha}-\theta^1_{\beta\alpha})}\overline s_{21}^\beta& \overline s_{22}^\beta\end{array}\right).
\end{equation}
Hence
\begin{equation*}
\overline s_{11}^\alpha=\overline s_{11}^\beta+i\overline\partial\theta_{\beta\alpha}^1.
\end{equation*}
So
\begin{equation*}
s_{11}^\beta-\overline s_{11}^\beta=s_{11}^\alpha-\overline s_{11}^\alpha+id\theta_{\beta\alpha}^1.
\end{equation*}
This implies that  $s_{11}^\alpha-\overline s_{11}^\alpha$ is the  connection 1-form of a connection on $L$ with respect to the frame $\mathring\mu_1^\alpha$ (cf. \cite[p.4]{Ko}). Its curvature form is $d(s_{11}^\alpha-\overline s_{11}^\alpha)$.
Thus, by (\ref{171010}) we finish the proof.
\end{proof}

Hence $-\partial\overline s_{11}+\overline \partial s_{11}$ is the $(1,1)$-part of the first Chern form of $L$ which is globally defined on $X$.
From (\ref{171240}) we also have
\begin{equation*}
\overline s_{12}^\alpha=e^{i(\theta_{\beta\alpha}^1-\theta^2_{\beta\alpha})}\overline s_{12}^\beta,
\end{equation*}
which says that  $s_{12}^\alpha\wedge \overline s_{12}^\alpha$ is a globally defined $(1,1)$-form on $X$.
Certainly $s_{21}^\alpha\wedge\overline s_{21}^\alpha$ is also globally defined.

Since $ H_{1,\epsilon}$ is the HYM metric, $\Lambda_{\omega_\epsilon}\mathring\Theta_1=0$. So by (\ref{171260}) we have
\begin{equation}\label{171241}
\frac i 2\Lambda_{\omega_\epsilon}(-\partial \overline s_{11}+\overline\partial s_{11}+\lambda^2 s_{12}\wedge\overline s_{12}-\lambda^{-2}s_{21}\wedge\overline s_{21}-\partial\overline\partial\ln \lambda)=0.
\end{equation}
On a K\"ahler manifold with a K\"ahler metric $\omega$, $\frac i 2\Lambda_\omega (s\wedge\overline s)=|s|_{\omega}^2$ for a $(1,0)$-form $s$.
Hence we can rewrite (\ref{171241}) as
\begin{equation}\label{1709-1}
\frac i 2 \Lambda_{\omega_\epsilon}(-\partial\overline s_{11}+\overline\partial s_{11})+\lambda^2|s_{12}|^2_\epsilon-\lambda^{-2}|s_{21}|^2_\epsilon-\frac 1 4 \bigtriangleup_\epsilon \ln \lambda=0.
\end{equation}
Here for simplicity, we have denoted $|s_{12}|^2_{\omega_\epsilon}$
by $|s_{12}|^2_\epsilon$ and $|s_{21}|^2_{\omega_\epsilon}$ by $|s_{21}|^2_\epsilon$.

On the other hand, we can use the explicit expression (\ref{444}) of the curvature form $\hat\Theta_0$ in Section 5 to calculate $\Lambda_{\omega_\epsilon}\mathring\Theta_{0,11}$.

\begin{lemm}\label{170973} If $\psi$ defined in (\ref{02}) is extended by zero to whole  $X$, then
\begin{equation*}
\frac i 2 \Lambda_{\omega_\epsilon}\mathring\Theta_{0,11}=\frac{h_{1\bar 1}-h_{2\bar 2}}{\lambda-\lambda^{-1}}\psi.
\end{equation*}
\end{lemm}

\begin{proof}
Since $T=S^{-1}B$ and $B=NA$, by (\ref{a8}) we have
\begin{equation}\label{171030}
\mathring \Theta_0=S^{-1}N A\tilde\Theta_0A^{-1} N^{-1} S=S^{-1} N \hat\Theta_0 N^{-1} S.
\end{equation}
Hence, by (\ref{444}) we get
\begin{equation*}
\frac i 2 \Lambda_{\omega_\epsilon} \mathring\Theta_0= S^{-1} N\psi \left(\begin{array}{cc}1&0\\ 0&-1\end{array}\right)N^{-1} S.
\end{equation*}
So the conclusion follows by direct calculation.
\end{proof}

Combining Lemma \ref{170973} with (\ref{171261}) yields
\begin{equation}\label{170976}
\frac i 2\Lambda_{\omega_\epsilon}(-\partial \overline s_{11}+\overline\partial s_{11})+|s_{12}|^2_\epsilon-|s_{21}|^2_\epsilon=\frac{h_{1\bar 1}-h_{2\bar 2}}{\lambda-\lambda^{-1}}\psi.
\end{equation}
Combined with (\ref{1709-1}), we have
\begin{equation*}
(\lambda^2-1)|s_{12}|^2_\epsilon+(1-\lambda^{-2})|s_{21}|^2_\epsilon-\frac 1 4 \bigtriangleup_\epsilon\ln \lambda=\frac{h_{2\bar 2}-h_{1\bar 1}}{\lambda-\lambda^{-1}}\psi,
\end{equation*}
which is the same as the formula in \cite[p.244]{Do2}.
Note that since $\det (h_{i\bar j})=1$, we have
\begin{equation}\label{171040}
(\lambda-\lambda^{-1})^2=(h_{2\bar 2}-h_{1\bar 1})^2+4|h_{1\bar 2}|^2.
\end{equation}
Hence if we denote $\lambda_0$ to be the minimum of the function $\lambda(x)$, then
\begin{equation}\label{170978}
(\lambda^2_0-1)\int_X|s_{12}|^2_\epsilon+(1-\lambda_0^{-2})\int_X|s_{21}|^2_\epsilon\leq \parallel\!\psi\!\parallel_{C^0(X)}(\leq C\epsilon^{l-1}).
\end{equation}
From this inequality we see that if we can prove
$\int_X|s_{12}|^2_\epsilon$ or $\int_X|s_{21}|^2_\epsilon$ is not too small, e.g.,  not less than $\epsilon^{3}$,
 then we finish the proof of the lemma.  To this end, we will use $C_1(L)$.

Since $C_1(L)$ is real, we can  write
\begin{equation}\label{170974}
\begin{aligned}
C_1(L)=&\frac i 2 a_1dz\wedge d\overline z+\frac i 2 a_2dw\wedge d\overline w+\frac i 2 a_3dz\wedge d\overline w+\frac i 2 \overline a_3dw\wedge d\overline z\\
&+\frac i 2a_4dz\wedge dw-\frac i 2\overline a_4d\overline z\wedge d\overline w+d(\theta+\overline\theta),
\end{aligned}
\end{equation}
where $a_1$ and $a_2$ are real numbers, and $a_3$ and $a_4$ are complex numbers.
Since $c_1(L)\in H^2(T,\mathbb Z)$, and $dx\wedge dy$, $dx\wedge du$, etc., form an integral basis of $H^2(T,\mathbb Z)$, by direct calculation
we conclude that $a_1,\,a_2\in\mathbb Z$ and $2a_3,\, 2a_4\in \mathbb Z[i]$.
Note that $L$ depends on $\epsilon$. Hence $a_1,\,a_2,\,a_3,\,a_4$ and  $\theta$ also depend on $\epsilon$.

Combining the $(1,1)$-components of right hand sides in (\ref{170974}) and (\ref{171262}) yields
\begin{equation}\label{171130}
\begin{aligned}
&-\partial\overline s_{11}+\overline\partial s_{11}=\pi(a_1 dz\wedge d\overline z+a_2dw\wedge d\overline w\\
&\qquad \qquad +a_3dz\wedge d\overline w+\overline a_3dw\wedge
d\overline z)-2\pi i(\partial\overline\theta+\overline\partial\theta).
\end{aligned}
\end{equation}
Consequently,
\begin{equation}\label{171080}
\frac i 2\Lambda_{\omega_\epsilon}(-\partial\overline s_{11}+\overline\partial s_{11})=\pi(a_1\epsilon+a_2\epsilon^{-1})+\pi \Lambda_{\omega_\epsilon}(\partial\overline\theta+\overline\partial\theta).
\end{equation}
Combined with (\ref{170976}), we have
\begin{equation}\label{171252}
\pi(a_1\epsilon+a_2\epsilon^{-1})+\pi \Lambda_{\omega_\epsilon}(\partial\overline\theta+\overline\partial\theta)
+|s_{12}|^2_\epsilon-|s_{21}|^2_\epsilon=\frac{h_{1\bar 1}-h_{2\bar 2}}{\lambda-\lambda^{-1}}\psi.
\end{equation}
Integrating over $X$, by Stokes' theorem and (\ref{171040}) we get
\begin{equation}\label{170963}
-\parallel\!\psi\!\parallel_{C^0(X)}\leq \pi(a_1\epsilon+a_2\epsilon^{-1})+\int_X(|s_{12}|^2_\epsilon-|s_{21}|^2_\epsilon)\leq \parallel\!\psi\!\parallel_{C^0(X)}.
\end{equation}

\begin{lemm}\label{171095}
If $|a_1\epsilon+a_2\epsilon^{-1}|>\epsilon^{3}$, then for any integer $l\geq 5$, there exists a constant $C$ depending
 on $l$ and $r_0$ such that for any sufficiently small $\epsilon>0$,
\begin{equation*}
\lambda_0\leq 1+C\epsilon^{l-4}.
\end{equation*}
\end{lemm}
\begin{proof}
If $a_1\epsilon+a_2\epsilon^{-1}<-\epsilon^3$, then by (\ref{170963}),
\begin{equation*}
\int_X |s_{12}|^2_\epsilon\geq \pi \epsilon^3-\parallel\!\psi\!\parallel_{C^0(X)}.
\end{equation*}
Hence, by (\ref{170978}) we have
\begin{equation*}
(\lambda_0^2-1)(\pi\epsilon^3-\parallel\!\psi\!\parallel_{C^0(X)})\leq \parallel\!\psi\!\parallel_{C^0(X)}.
\end{equation*}
According to (\ref{445}), for any integer $l>1$, there exists a constant $C_1$ depending on $r_0$ and $l$ such that $\parallel\!\psi\!\parallel_{C^0(X)}\leq C_1\epsilon^{l-1}$.
Hence if $l\geq 5$ and $\epsilon>0$ is small enough, then $\pi\epsilon^3>2C_1 \epsilon^{l-1}$.
So $\lambda_0^2-1\leq\frac2\pi C_1\epsilon^{l-4}<C_1\epsilon^{l-4}$. Thus $\lambda_0<1+\frac{C_1}{2}\epsilon^{l-4}=1+C\epsilon^{l-4}$.

On the other hand, if $a_1\epsilon+a_2\epsilon^{-1}>\epsilon^3$, then by (\ref{170963}) again we have
\begin{equation*}
\int_X|s_{21}|^2_\epsilon\geq \pi \epsilon^3-\parallel\!\psi\!\parallel_{C^0(X)}.
\end{equation*}
Hence we can also get the conclusion by the similar arguments as the first case.
\end{proof}

In the remainder part of this section, we will prove that if $|a_1\epsilon+a_2\epsilon^{-1}|\leq\epsilon^{3}$, then $\lambda_0\leq 1+C\epsilon^{l-6}$.
The strategy is to estimate $\int_X\bigl(\mathring\Theta_{0,11}-\frac 1 2 \textup{Tr}\hat\Theta_0\bigr)^2$ by two methods separately
in Lemmas \ref{170980} and \ref{170981}.
In this way we can get a positive lower bound  $C\epsilon^5$ of  $\int_X(|s_{12}|^2_\epsilon+|s_{21}|^2_\epsilon)$.
Then as the proof of the above lemma, we can get the desired estimate of $\lambda_0$.

\begin{lemm}\label{170980}
For any integer $l>1$, there exists a  constant $C$ depending on $l$ and $r_0$ such that for any sufficient small $\epsilon>0$,
\begin{equation*}
\frac{1}{4\pi^2}\int_X\bigl(\mathring\Theta_{0,11}-\frac 1 2 \textup{Tr}\hat\Theta_0\bigr)^2\leq \int_Xc_2(V)-C(r_0)
\epsilon^{4}+C\epsilon^{2l-2},
\end{equation*}
where the constant $C(r_0)$ is positive and only depends on $r_0$.
\end{lemm}

\begin{proof}
On $V|_{\mathcal U_0}$ or $V|_{\mathcal U_j}$, $H_{0,\epsilon}=\mathbf h_\epsilon=h_0$ and hence $\hat\Theta_0=\hat\Theta(h_0)$.
By (\ref{g1}) we have
\begin{equation}\label{171016}
\textup{Tr}\hat\Theta_0=-\pi i(dw_1^\ast(z)+dw_2^\ast(z))\wedge d\overline w-\pi i
(d\overline{w_1^\ast(z)}+d\overline{w_2^\ast(z)})\wedge dw
\end{equation}
and
\begin{equation}\label{171013}
\begin{aligned}
\textup{Tr}(\hat\Theta_0\wedge\hat\Theta_0)=-2\pi^2\bigl(dw_1^\ast(z) \wedge d\overline{w_1^\ast(z)}+dw_2^\ast(z) \wedge d\overline{w_2^\ast(z)}\bigr)\wedge dw\wedge d\overline w.
\end{aligned}
\end{equation}
On $V|_{\mathcal U_a}$, combining (\ref{171011}) with (\ref{433}) yields
$\textup{Tr}\hat\Theta_0=0$
and
\begin{equation}\label{171017}
\begin{aligned}
&\textup{Tr}(\hat\Theta_0\wedge\hat\Theta_0)=8\pi^2(\kappa^4-|z|^2\kappa^{-4})\frac{\partial^2(4\ln \kappa)}{\partial z\partial\overline z}\frac{\omega_\epsilon^2}{2}\\
&\quad\quad +8\pi^2\Bigl(\kappa^{-4}\Bigl|1-z\frac{\partial(4\ln\kappa)}{\partial z}\Bigr|^2+\kappa^4\Bigl|\frac{\partial(4\ln\kappa)}{\partial z}\Bigr|^2\Bigr)\frac{\omega_\epsilon^2}{2}.
\end{aligned}
\end{equation}

Before proceeding to the next step of this proof, we first use the curvature form $\hat\Theta_0$ to revisit $c_1(V)$ and $c_2(V)$.
Recall that on $U_a$, we have picked $w_1^\ast=\sqrt{z_a}$ and $w_2^\ast=-\sqrt{z_a}$. If we take $w_2^\ast(z)=-w_1^\ast(z)$ as two local sections of (\ref{171212}), then by (\ref{171016})  $\textup{Tr}\hat\Theta_0$ also vanishes on $\mathcal U_0$ and $\mathcal U_a$. Hence $c_1(V)=0$. Thus by (\ref{171015}), $c_2(V)$ can be represented by
\begin{equation*}
C_2(V)=\frac{1}{8\pi^2}\textup{Tr}(\hat\Theta_0\wedge\hat\Theta_0).
\end{equation*}
We define on $X$ a real $(2,2)$-current $\Omega$ by
\begin{equation}\label{171246}
\Omega|_{\mathcal U_\alpha}=\textup{Tr}(\hat\Theta_0\wedge\hat\Theta_0)|_{\mathcal U_\alpha}\qquad\textup{for\ \  $\alpha=0$ or $j$},
\end{equation}
and
\begin{equation*}
\Omega|_{\mathcal U_a}=\frac{4\pi^2}{|z_a|}\frac{\omega_\epsilon^2}{2},
\end{equation*}
which is equal to $\textup{Tr}(\hat\Theta_0\wedge\hat\Theta_0)$ in (\ref{171013}) when $w_1^\ast(z)=\sqrt{z_a}$ and $w_2^\ast(z)=-\sqrt{z_a}$. So $\Omega$ is indeed well-defined. Now
\begin{equation}\label{171245}
\int_{\mathcal U_a}\Omega=\int_{U_a}\frac{4\pi^2}{|z_a|}\frac i 2 dz_a\wedge dz_a=8\pi^3\int_0^{2r_0}dr=16\pi^3r_0.
\end{equation}
On the other hand, $\textup{Tr}(\hat\Theta_0\wedge\Theta_0)$ in (\ref{171017}) can be written as
\begin{equation*}
-2\pi^2\partial\overline\partial\sigma\wedge dw\wedge d\overline w\qquad \textup{for\ \  $\sigma=\kappa^4+r^2\kappa^{-4}$}.
\end{equation*}
Near the boundary of $\mathcal U_a$, $\kappa^4=r$ and hence $\sigma=2r$. So
\begin{equation*}
\begin{aligned}
&\int_{\mathcal U_a}\textup{Tr}(\hat\Theta_0\wedge\hat\Theta_0)=8\pi^2\int_{U_a}\frac{\partial^2\sigma}{\partial z_a\partial\overline{z_a}}\frac i 2 dz_a\wedge d\overline{z_a}\\
=&2\pi^2\int_{U_a}\bigl(\sigma''(r)+\frac 1 r \sigma'(r)\bigr)rdrd\theta=8\pi^3r_0\sigma'(2r_0\bigr)=16\pi^3r_0.
\end{aligned}
\end{equation*}
Compared with (\ref{171245}), by definition (\ref{171246}) we know that
\begin{equation*}
\int_X\Omega=\int_X\textup{Tr}(\hat\Theta_0\wedge\hat\Theta_0).
\end{equation*}
As $\dim_{\mathbb R}X=4$, $c_2(V)$ can also be represented by $\frac 1 {8\pi^2}\Omega$ which is clearly equal to
\begin{equation*}
-\frac 1 4 p_{2\ast}(dw^\ast\wedge d\overline{w^\ast}\wedge dw\wedge d\overline w).
\end{equation*}
Thus we can get formula (\ref{10}).

Now we proceed to prove the lemma.
In order to see why the constant $C$ in the lemma is independent  of $\textup{Tr}\hat\Theta_0$, we will not particularly assume that $w_2^\ast(z)=-w_1^\ast(z)$ for
two local sections of (\ref{171212}). Thus in general,
$\textup{Tr}\hat\Theta_0$ does not vanish and is determined by a fixed localization of $\varphi$  in Section 2.

 On $V|_{\mathcal U_0}$ or $V|_{\mathcal U_j}$,
combining (\ref{171030}) with (\ref{g1}) yields
\begin{equation}\label{170985}
\begin{aligned}
\mathring\Theta_{0,11}=&-\pi i\frac{\lambda-h_{2\bar 2}}{\lambda-\lambda^{-1}}(dw_1^\ast\wedge d\overline w+d\overline{ w_1^\ast}\wedge dw)\\
&-\pi i\frac{\lambda-h_{1\bar 1}}{\lambda-\lambda^{-1}}(dw_2^\ast\wedge d\overline w+d\overline{ w_2^\ast}\wedge dw).
\end{aligned}
\end{equation}
Since
\begin{equation*}
\lambda-h_{2\bar 2}=\frac{h_{1\bar 1}-h_{2\bar 2}}{2}+\frac{\lambda-\lambda^{-1}}{2} \ \and\  \lambda-h_{1\bar 1}=\frac{h_{2\bar 2}-h_{1\bar 1}}{2}+\frac{\lambda-\lambda^{-1}}{2},
\end{equation*}
we can rewrite (\ref{170985}) as
\begin{equation*}
\begin{aligned}
\mathring\Theta_{0,11}=&\frac{\pi i}{2}\frac{h_{2\bar 2}-h_{1\bar 1}}{\lambda-\lambda^{-1}}\bigl((dw_1^\ast-dw^\ast_2)\wedge d\overline w+(d\overline{ w_1^\ast}-d\overline{w_2^\ast})\wedge d w\bigr)\\
&-\frac{\pi i}{2}\bigl((dw_1^\ast+dw_2^\ast)\wedge d\overline w+(d\overline{ w_1^\ast}+d\overline{w_2^\ast})\wedge dw\bigr),
\end{aligned}
\end{equation*}
where the second term by (\ref{171016}) is $\frac 12 \textup{Tr}\hat \Theta_0$. Hence,
\begin{equation}\label{171001}
\begin{aligned}
&\frac{1}{4\pi^2}\bigl(\mathring\Theta_{0,11}-\frac 1 2 \textup{Tr}\hat\Theta_0\bigr)^2\\
=&-\frac{1}{8}\frac{(h_{2\bar 2}-h_{1\bar 1})^2}{(\lambda- \lambda^{-1})^2}\bigr((dw_1^\ast\wedge d\overline{ w_1^\ast}+dw^\ast_2\wedge d\overline {w_2^\ast})\wedge dw\wedge d\overline w\\
&\qquad\qquad\qquad \qquad-(dw_1^\ast \wedge d\overline {w_2^\ast}+dw_2^\ast\wedge d\overline {w_1^\ast})\wedge dw\wedge d\overline w\bigr).
\end{aligned}
\end{equation}
Note that
\begin{equation}
\begin{aligned}\label{171242}
&-\frac 1 4 dw_1^\ast\wedge d\overline {w_1^\ast}\wedge dw\wedge d\overline w=\Bigl|\frac{\partial w_1^\ast}{\partial z}\Bigr|^2\frac{\omega_\epsilon^2}{2}\geq 0,\\
& -\frac 1 4 dw_2^\ast\wedge d\overline {w_2^\ast}\wedge dw\wedge d\overline w=\Bigl|\frac{\partial w_2^\ast}{\partial z}\Bigr|^2\frac{\omega_\epsilon^2}{2}\geq 0,
\end{aligned}
\end{equation}
and that by the triangle inequality and Cauchy's inequality,
 \begin{equation}\label{171243}
 \begin{aligned}
 &\frac 1 4 (dw_1^\ast\wedge d\overline {w_2^\ast}+dw_2^\ast\wedge d\overline {w_1^\ast})\wedge dw\wedge d\overline w\\
 =&-\Bigl(
\frac{\partial w_1^\ast}{\partial z}\overline{\frac{\partial w_2^\ast}{\partial z}}+\frac{\partial w_2^\ast}{\partial z}\overline{\frac{\partial w_1^\ast}{\partial z}}\Bigr)\frac{\omega_\epsilon^2}{2}
\leq \Bigl(\Bigl|\frac{\partial w_1^\ast}{\partial z}\Bigr|^2+\Bigl|\frac{\partial w_2^\ast}{\partial z}\Bigr|^2\Bigr)\frac{\omega_\epsilon^2}{2}.
\end{aligned}
\end{equation}
Inserting (\ref{171242}) and (\ref{171243}) into (\ref{171001}), we get
\begin{equation*}
\frac{1}{4\pi^2}(\mathring\Theta_{0,11}-\frac 1 2 \textup{Tr}\hat\Theta_0)^2\leq \frac{(h_{2\bar 2}-h_{1\bar 1})^2}{(\lambda- \lambda^{-1})^2}\Bigl(\Bigl|\frac{\partial w_1^\ast}{\partial z}\Bigr|^2+\Bigl|\frac{\partial w_2^\ast}{\partial z}\Bigr|^2\Bigr)\frac{\omega_\epsilon^2}{2}.
\end{equation*}
Therefore on $V|_{\mathcal U_0}$ or $V|_{\mathcal U_j}$, by (\ref{171040}), (\ref{171242}) and (\ref{171013}) we obtain
\begin{equation}\label{171041}
\begin{aligned}
&\frac{1}{4\pi^2}(\mathring\Theta_{0,11}-\frac 1 2 \textup{Tr}\hat\Theta_0)^2
\leq\Bigl(\Bigl|\frac{\partial w_1^\ast}{\partial z}\Bigr|^2+\Bigl|\frac{\partial w_2^\ast}{\partial z}\Bigr|^2\Bigr)\frac{\omega_\epsilon^2}{2}\\
=&-\frac 1 4 (dw_1^\ast \wedge d\overline{ w_1^\ast}+dw^\ast_2\wedge d \overline{ w_2^\ast})\wedge dw\wedge d\overline w
=\frac{1}{8\pi^2}\textup{Tr}(\hat\Theta_0\wedge\hat\Theta_0).
\end{aligned}
\end{equation}

Next we should concentrate on $V|_{\mathcal U_a}$. Combining (\ref{171030}) with (\ref{171011}) and (\ref{433}) yields
\begin{equation*}
\begin{aligned}
\mathring \Theta_{0,11}=&-\frac 12\frac{h_{2\bar 2}-h_{1\bar 1}}{\lambda-\lambda^{-1}}\frac{\partial^2(4\ln \kappa)}{\partial z\partial\overline z}dz\wedge d\overline z+\frac{h_{2\bar 2}-h_{1\bar 1}}{\lambda-\lambda^{-1}}\pi^2(\kappa^4-r^2\kappa^{-4})dw\wedge d\overline w\\
&-\frac{\pi i}{\lambda-\lambda^{-1}}\Bigl(h_{2\bar 1}\kappa^{-2}
\Bigl(1-z\frac{\partial(4\ln\kappa)}{\partial z}\Bigr)+h_{1\bar 2}\kappa^2\frac{\partial(4\ln\kappa)}{\partial z}\Bigr)dz\wedge d\overline w\\
&-\frac{\pi i}{\lambda-\lambda^{-1}}\Bigl(h_{2\bar 1}\kappa^2\frac{\partial(4\ln\kappa)}{\partial z}+h_{1\bar 2}\kappa^{-2}\Bigl(1-\overline z\frac{\partial
(4\ln\kappa)}{\partial\overline z}\Bigr)\Bigr)d\overline z\wedge dw.
\end{aligned}
\end{equation*}
Consequently, since $\textup{Tr}\hat \Theta_{0}|_{\mathcal U_a}=0$, by Cauchy's inequality we obtain
\begin{equation*}\label{171002}
\begin{aligned}
\frac{1}{4\pi^2}\bigl(\mathring\Theta_{0,11}-\frac 1 2\textup{Tr}\hat\Theta_0\bigr)^2
\leq & \frac{(h_{1\bar 1}-h_{2\bar 2})^2}{(\lambda-\lambda^{-1})^2}\bigl(\kappa^4
-r^2\kappa^{-4}\bigr)\frac{\partial^2(4\ln\kappa)}{\partial z\partial \overline z}\frac{\omega_\epsilon^2}{2}\\
+&\frac{4|h_{1\bar 2}|^2}{(\lambda-\lambda^{-1})^2}\Bigl(\kappa^{-4}\Bigl|1-z\frac{\partial(4\ln\kappa)}{\partial z}\Bigr|^2+\kappa^4\Bigl|\frac{\partial(4\ln\kappa)}{\partial z}\Bigr|^2\Bigr)\frac{\omega_\epsilon^2}{2}.
\end{aligned}
\end{equation*}
Then by (\ref{171040}) and (\ref{171017}), it can be rewrite as
\begin{equation}\label{171002}
\frac{1}{4\pi^2}\bigl(\mathring\Theta_{0,11}-\frac 1 2\textup{Tr}\hat\Theta_0\bigr)^2
\leq \frac{1}{8\pi^2}\textup{Tr}(\hat\Theta_0\wedge\hat\Theta_0)-Q\frac{\omega_\epsilon^2}{2},
\end{equation}
where $Q$ is a function on $\mathcal U_a$ defined as
\begin{equation*}
\begin{aligned}
Q=&\frac{4|h_{1\bar 2}|^2}{(\lambda-\lambda^{-1})^2}(\kappa^4-r^2\kappa^{-4})\frac{\partial^2(4\ln\kappa)}{\partial z\partial\overline z}\\
+&\frac{(h_{1\bar 1}-h_{2\bar 2})^2}{(\lambda- \lambda^{-1})^2}\Bigl(\kappa^{-4}\Bigl|1-z\frac{\partial(4\ln\kappa)}{\partial z}\Bigr|^2+\kappa^4\Bigl|\frac{\partial(4\ln\kappa)}{\partial z}\Bigr|^2\Bigr),
\end{aligned}
\end{equation*}
where the second term is nonnegative. Hence we only need to consider the first term.

On $\mathcal U_a-\mathcal U_a(r_0)$, $4\ln\kappa=\phi_2-\phi_1=\ln(\frac r\phi)$. Here $\phi$ is defined by (\ref{171018}). So the first term of $Q$
can be written as
\begin{equation*}
-\frac{4|h_{1\bar 2}|^2r}{(\lambda-\lambda^{-1})^2}\bigl(\frac 1\phi-\phi\bigr)\frac{\partial^2\ln\phi}{\partial z\partial\overline z}.
\end{equation*}
It is bounded by $-C_1\epsilon^{2l-2}$ for any integer $l\geq 2$ and sufficiently small $\epsilon>0$. This is because  $\phi=1+O(u_\epsilon-\frac 1 2\ln r)$ and according to estimate (1) in Theorem \ref{solution} there exists a constant $C_2$ depending on $l$ and $r_0$ such that
\begin{equation*}
\parallel\!u_\epsilon-1/ 2 \ln r\!\parallel_{C^k([r_0,2r_0])}\leq C_2\epsilon^{l-k}.
\end{equation*}
Hence on $\mathcal U_a-\mathcal U_a(r_0)$,
\begin{equation*}\label{171003}
Q\geq -C_1\epsilon^{2l-2}.
\end{equation*}
Combining (\ref{171041}) and (\ref{171002}) and using the above inequality, we obtain
\begin{equation}\label{171007}
\frac{1}{4\pi^2}\int_X\bigl(\mathring\Theta_{0,11}-\frac 1 2\textup{Tr}\hat\Theta_0\bigr)^2\leq \int_Xc_2(V)-\sum_{a=1}^{n}\int_{\mathcal U_a(r_0)}Q+C\epsilon^{2l-2},
\end{equation}
where the constant $C=nC_1$ only depends on $l$ and $r_0$ as $nr_0^2<1$.

Finally we will estimate $\int_{\mathcal U_a(r_0)} Q$. We recall on $\mathcal U_a(r_0)$:
\begin{equation*}
\kappa^4=e^{2u_\epsilon};
\quad \frac{\partial(4\ln\kappa)}{\partial z}=u_\epsilon'\cdot\frac{\overline z}{r};\quad \frac{\partial^2(4\ln\kappa)}{\partial z\partial\overline z}=\frac 1 2(u''_\epsilon(r)+\frac 1 r u'_\epsilon(r)).
\end{equation*}
Hence, by (\ref{417}) we have
\begin{equation*}
\begin{aligned}
Q=&\frac{4|h_{1\bar 2}|^2}{(\lambda-\lambda^{-1})^2}\frac{2\pi^2}{\epsilon^2}\bigl(e^{2u_\epsilon}-r^2e^{-2u_\epsilon}\bigr)^2+\frac{(h_{1\bar 1}-h_{2\bar 2})^2}{(\lambda-\lambda^{-1})^2}\bigl(e^{-2u_\epsilon}(1-ru_\epsilon')^2+e^{2u_\epsilon}(u'_\epsilon)^2\bigr).
\end{aligned}
\end{equation*}
For convenience, define
\begin{align*}
&f_1= \frac{2\pi^2}{\epsilon^2}\bigl(e^{2u_\epsilon}-r^2e^{-2u_\epsilon}\bigr)^2,\\
&f_2=e^{2u_\epsilon}(u'_{\epsilon})^2+ e^{-2u_\epsilon}(1-ru_\epsilon')^2.
\end{align*}
They only depend on $r$. We then define on $\mathcal U_a(r_0)$ a function
\begin{equation*}
\tilde f(r)=\inf\{f_1(r), f_2(r)\}.
\end{equation*}
In view of (\ref{171040}), we have
\begin{equation*}
\int_{\mathcal U_a(r_0)}Q\geq \int_{\mathcal U_a(r_0)}\tilde f(r)\frac{\omega_\epsilon^2}{2}=2\pi \int_0^{r_0}\tilde f(r) rdr.
\end{equation*}

As in Section 3, we define $v_\epsilon(r)=u_\epsilon(r)-\frac 1 2\ln r$ on $]0, 2r_0]$.
Then by Lemma \ref{171250}, $v_\epsilon(r)>0$ and $v'_\epsilon(r)<0$. Hence
\begin{equation*}
\Bigl(\frac{f_1}{r^2}\Bigr)'=\frac{8\pi^2}{\epsilon^2}(e^{2v_\epsilon}-e^{-2v_\epsilon})(e^{2v_\epsilon}+e^{-2v_\epsilon})v_\epsilon'<0.
\end{equation*}
So $\frac{f_1}{r^2}$ is a decreasing function. On the other hand, since $0\leq u'_\epsilon(r)\leq \frac 1 {2r}$ and $u_\epsilon(2r_0)=\frac 1 2 \ln (2r_0)$, we have
\begin{equation*}
(1-ru'_\epsilon(r))^2>\frac 1 4\quad \textup{ and}\quad e^{-2u_\epsilon(r)}>e^{-2u_\epsilon(2r_0)}=\frac 1 {2r_0}.
\end{equation*}
 Hence,
\begin{equation*}
f_2\geq  e^{-2u_\epsilon}(1-ru_\epsilon')^2\geq \frac 1 {8r_0}.
\end{equation*}
So
\begin{equation*}
\frac{f_2}{r^2}\geq \frac{1}{8r_0^3}.
\end{equation*}

Since $\frac{1}{8r_0^3}$ is the constant function  and $\frac{f_1}{r^2}$ is the  decreasing function,
there exists a $r_1$ in $[0,r_0]$ such that
\begin{equation*}
\begin{aligned}
&\inf\Bigl\{\frac{f_1}{r^2},\frac{f_2}{r^2}\Bigr\}\geq
\frac{1}{8r_0^3}\quad \textup{if}\ r\in[0,r_1]\quad \textup{and}\\
&\inf\Bigl\{\frac{f_1}{r^2},\frac{f_2}{r^2}\Bigr\}\geq \frac{f_1}{r^2} \quad
 \textup{if}\ r\in [r_1,r_0].
\end{aligned}
\end{equation*}
So
\begin{equation*}
\begin{aligned}
\tilde f\geq \frac{r^2}{8r_0^3}\quad \textup{if}\ r\in[0,r_1]\quad \textup{and} \quad \tilde f\geq f_1\quad \textup{if}\ r\in[r_1, r_0].
\end{aligned}
\end{equation*}
Consequently,
\begin{equation*}
\begin{aligned}
\int_{\mathcal U_a(r_0)}Q\geq & \frac{\pi}{4r_0^3}\int_0^{r_1}r^3dr+\frac{4\pi^3}{\epsilon^2}\int_{r_1}^{r_0}(e^{2u_\epsilon}-r^2e^{-2u_\epsilon})^2rdr\\
=&\frac{\pi r_1^4}{16 r_0^3}+\frac{4\pi^3}{\epsilon^2}\int_{r_1}^{r_0}(e^{2u_\epsilon}-r^2e^{-2u_\epsilon})^2rdr.
\end{aligned}
\end{equation*}
In the following we assume that $r_1<r_0$. Otherwise we have $\int_{\mathcal U_a(r_0)}Q\geq \frac{\pi r_0}{16}$ and we are done.
Using the Schwarz inequality to the second term yields
\begin{equation}\label{171005}
\begin{aligned}
\int_{\mathcal U_a(r_0)}Q
\geq& \frac{\pi r_1^4}{16 r_0^3}+\frac{4\pi^3}{\epsilon^2}\frac{(\int_{r_1}^{r_0}(e^{2u_\epsilon}-r^2e^{-2u_\epsilon})rdr)^2}{\int_{r_1}^{r_0}rdr}\\
\geq & \frac{\pi r_1^4}{16 r_0^3}+\frac{8\pi^3}{r_0^2\epsilon^2}\bigl(\int_{r_1}^{r_0}(e^{2u_\epsilon}-r^2e^{-2u_\epsilon})rdr\bigr)^2.
\end{aligned}
\end{equation}
By equation (\ref{417}), we have
\begin{equation*}
\frac{\pi^2}{\epsilon^2}(e^{2u_\epsilon}-r^2e^{-2u_\epsilon})=\frac 1 4 (u''_\epsilon+\frac 1 r u'_\epsilon)=\frac 1 {4r}(ru'_\epsilon)'.
\end{equation*}
Hence
\begin{equation*}
\frac{\pi^2}{\epsilon^2}\int_{r_1}^{r_0}(e^{2u_\epsilon}-r^2e^{-2u_\epsilon})rdr=\frac 14 \int_{r_1}^{r_0}(ru'_\epsilon)'dr=\frac 1 4 (r_0u_\epsilon'(r_0)-r_1u_\epsilon'(r_1)).
\end{equation*}
Putting it into (\ref{171005}), we obtain
\begin{equation*}
\int_{\mathcal U_a(r_0)}Q\geq \frac{\pi r_1^4}{16r_0^3}+\frac{\epsilon^2}{2\pi r_0^2}(r_0u'_\epsilon(r_0)-r_1u_\epsilon'(r_1))^2.
\end{equation*}

We recall inequality (\ref{1001}): When $r\in [0,r_0]$, $0\leq u'_\epsilon(r)<\frac{r}{4\epsilon^2}$.
Hence if $r_1<\epsilon(r_0u'_\epsilon(r_0))^{\frac 12}$, then
\begin{equation*}
0\leq r_1u'_\epsilon(r_1)<\frac{r_1^2}{4\epsilon^2}<\frac{r_0u'_\epsilon(r_0)}{4}.
\end{equation*}
Thus
\begin{equation*}
\int_{\mathcal U(r_0)}Q\geq\frac{\epsilon^2}{2\pi r_0^2}(r_0u'_\epsilon(r_0)-r_1u_\epsilon'(r_1))^2\geq  \frac{9\epsilon^2}{32\pi r_0^2}(r_0u_\epsilon'(r_0))^2\geq C_1(r_0)\epsilon^2,
\end{equation*}
since according to  estimate (1) of Theorem \ref{solution}, $|u'_\epsilon(r_0)-\frac 1 {2r_0}|\leq C_2(r_0)\epsilon$ for any sufficiently small $\epsilon>0$. On the other hand, if $r_1\geq \epsilon(r_0u_\epsilon'(r_0))^{\frac  1 2}$, then
\begin{equation*}
\int_{\mathcal U_a(r_0)}Q\geq\frac{\pi r_1^4}{16 r_0^3}\geq  \frac {\pi(r_0u_\epsilon'(r_0))^2}{16r_0^3}\epsilon^4\geq C_3(r_0)\epsilon^4.
\end{equation*}
In summary, we have proved that for sufficient small $\epsilon>0$, there exists a positive constant $C_4(r_0)$ depending on $r_0$ such that
\begin{equation*}
\int_{\mathcal U_a(r_0)}Q\geq C_4(r_0)\epsilon^4.
\end{equation*}
Combined with (\ref{171007}), since the constant $nC_4(r_0)$ can be written as a constant $C(r_0)$,  we finish the proof of the lemma.
\end{proof}

\begin{lemm}\label{170981}
For any integer $l>1$, there exists a constant $C$ depending on $r_0$ and $l$ such that
for sufficiently small $\epsilon>0$,
\begin{equation*}
\begin{aligned}
&\frac{1}{4\pi^2}\int_X\bigl(\mathring\Theta_{0,11}-\frac 12 \textup{Tr}\hat\Theta_0\bigr)^2
\geq -\epsilon^{-1}(1+|a_1|+|a_2|+|a_3|)\int_X(|s_{12}|^2_\epsilon+|s_{21}|^2_\epsilon)\\
&\qquad \qquad \qquad \qquad -2a_1a_2+2|a_3|^2-2(a_1\epsilon+a_2\epsilon^{-1})^2-C\epsilon^{2l-2}.
\end{aligned}
\end{equation*}
\end{lemm}
\begin{proof}
By (\ref{171261}), $\mathring\Theta_{0,11}-\frac 1 2 \textup{Tr}\hat\Theta_0$ is decomposed as the sum of the following three expressions:
\begin{equation*}
\begin{aligned}
&I_1=-\partial\overline s_{11}+\overline\partial s_{11}+2\pi i(\partial\overline\theta+\overline\partial\theta);\\
&I_2=s_{12}\wedge \overline s_{12}-s_{21}\wedge
\overline s_{21}-2\pi i (\partial\overline\theta+\overline\partial\theta);\quad\and\\
&I_3=-\frac 1 2\textup{Tr}\hat\Theta_0.
\end{aligned}
\end{equation*}
Then we write
\begin{equation}\label{171205}
\frac{1}{4\pi^2}\int_X\bigl(\mathring\Theta_{0,11}-\frac 12\textup{Tr}\hat\Theta_0\bigr)^2=\sum_{i,j=1}^3 I_{ij}\quad \textup{for}\ \ I_{ij}=\frac 1{4\pi^2}\int_X I_i\wedge I_j.
\end{equation}

By (\ref{171130}) we have
\begin{equation}\label{171150}
I_1=\pi(a_1dz\wedge d\overline z+a_2dw\wedge d\overline w+a_3dz\wedge d\overline w+\overline a_3dw\wedge d\overline z).
\end{equation}
Hence
\begin{equation}\label{171210}
I_{11}=-2a_1a_2+2|a_3|^2.
\end{equation}
Since $c_1(V)=0$, $\frac 1 2 \textup{Tr}\hat \Theta_0$ is a $\partial\overline\partial$-exact form. By Stokes' theorem we have
\begin{equation}\label{171203}
2I_{13}=0,\qquad I_{33}=0.
\end{equation}

By Stokes' theorem, we also have
\begin{equation}\label{171151}
2I_{12}=\frac 1 {2\pi^2} \int_X I_1\wedge (s_{12}\wedge \overline s_{12}-s_{21}\wedge\overline s_{21}).
\end{equation}
Let
\begin{equation*}
s_{12}=b_1dz+b_2dw\quad\and\quad s_{21}=b_3dz+b_4dw,
\end{equation*}
where $b_i$ for $i=1,\,2,\,3,\,4$ are complex functions locally defined on $X$.
Then
\begin{equation}\label{171009}
\begin{aligned}
&s_{12}\wedge \overline s_{12}=|b_1|^2dz\wedge d\overline z+|b_2|^2 dw\wedge d \overline w+b_1\overline b_2dz\wedge d\overline w+\overline b_1b_2dw\wedge d\overline z,\\
&s_{21}\wedge\overline s_{21}=|b_3|^2dz\wedge d\overline z+|b_4|^2dw\wedge d\overline w+b_3\overline b_4dz\wedge d\overline w+\overline b_3b_4dw\wedge d\overline z.
\end{aligned}
\end{equation}
Hence
\begin{equation}\label{171251}
|s_{12}|_\epsilon^2=|b_1|^2\epsilon+|b_2|^2\epsilon^{-1}\and |s_{21}|^2_\epsilon=|b_3|^2\epsilon+|b_4|^2\epsilon^{-1}.
\end{equation}
Putting (\ref{171150}) and (\ref{171009}) into (\ref{171151}) yields
\begin{equation*}
\begin{aligned}
2I_{12}=&-\frac{2}{\pi}a_1\int_X(|b_2|^2-|b_4|^2)-\frac {2}{\pi}a_2\int_X(|b_1|^2-|b_3|^2)\\
&+\frac{2}{\pi}a_3\int_X(\overline b_1b_2-\overline b_3b_4)+\frac 2 \pi\overline a_3\int_X(b_1\overline b_2-b_3\overline b_4).
\end{aligned}
\end{equation*}
Using the triangle inequality and Cauchy's inequality to the third and fourth terms, we have
\begin{equation*}
\begin{aligned}
2I_{12}\geq& -\frac 2 \pi |a_1|\epsilon\int_X (|b_2|+|b_4|)\epsilon^{-1}-\frac 2 \pi |a_2|\epsilon^{-1}\int_X(|b_1|^2+|b_3|^2)\epsilon\\
&-\frac 2 \pi |a_3|\int_X(|b_1|\epsilon+|b_2|\epsilon^{-1}+|b_3|^2\epsilon+|b_4|^2\epsilon^{-1}).
\end{aligned}
\end{equation*}
Hence, by (\ref{171251}) we can easily get
\begin{equation}\label{171022}
\begin{aligned}
2I_{12}\geq -\epsilon^{-1}(|a_1|+|a_2|+|a_3|)\int_X(|s_{12}|^2_\epsilon+|s_{21}|^2_\epsilon).
\end{aligned}
\end{equation}

Now we handle $I_{22}$. If we let $2\pi\overline\theta=t_1d\overline z+t_2d\overline w$, then
\begin{equation*}
\begin{aligned}
&2\pi i(\partial\overline\theta+\overline\partial\theta)=-2\textup{Im}\frac {\partial t_1}{\partial z}dz\wedge d\overline z-2\textup{Im}
\frac{\partial t_2}{\partial w}dw\wedge d\overline w\\
&\qquad +i\Bigl(\frac{\partial t_2}{\partial z}-\overline{\frac{\partial t_1}{\partial w}}\Bigr)dz\wedge d\overline w+i\Bigl(\frac{\partial t_1}{\partial w}-\overline{\frac{\partial t_2}{\partial z}}\Bigr)dw\wedge d\overline z.
\end{aligned}
\end{equation*}
Combined with (\ref{171009}), we have
\begin{equation*}
\begin{aligned}
I_{22}=&-\frac{2}{\pi^2}\int_X\bigl(|b_1|^2-|b_3|^2+2\textup{Im}\frac{\partial t_1}{\partial z}\bigr)\bigl(|b_3|^2-|b_4|^2+2\textup{Im}\frac{\partial t_2}{\partial w}\bigr)\\
&+\frac{2}{\pi^2}\int_X\Bigl|b_1\overline b_2-b_3\overline b_4-i\Bigl(\frac{\partial t_2}{\partial z}-\overline{\frac{\partial t_1}{\partial w}}\Bigr)\Bigr|^2,
\end{aligned}
\end{equation*}
where the second term is nonnegative, and the first term can be written as the sum of the following three terms:
\begin{equation*}
\begin{aligned}
&I_{22}^1=-\frac{1}{\pi^2}\int_X\Bigl(\epsilon\bigl(|b_1|^2-|b_3|^2+2\textup{Im}\frac{\partial t_1}{\partial z}\bigr)+\epsilon^{-1}
 \bigl(|b_2|^2-|b_4|^2+2\textup{Im}\frac{\partial t_2}{\partial w}\bigr)\Bigr)^2;\\
&I_{22}^2=\frac{1}{\pi^2}\int_X\epsilon^2\bigl(|b_1|^2-|b_3|^2+2\textup{Im}\frac{\partial t_1}{\partial z}\bigr)^2;\\
&I_{22}^3=\frac{1}{\pi^2}\int_X\epsilon^{-2}\bigl(|b_2|^2-|b_4|^2+2\textup{Im}
\frac{\partial t_2}{\partial w}\bigr)^2.
\end{aligned}
\end{equation*}
Here $I_{22}^2$ and $I_{22}^3$ are also nonnegative. Hence we only need to deal with $I_{22}^1$.
We observe that its integrand is exactly
\begin{equation*}
-\frac 1 {\pi^2}\bigl(|s_{12}|^2_\epsilon-|s_{21}|^2_\epsilon+\pi \Lambda_{\omega_\epsilon}(\partial\overline\theta+\overline\partial\theta)\bigr)^2,
\end{equation*}
which by (\ref{171252}) is
\begin{equation*}
\begin{aligned}
-\frac 1{\pi^2}\Bigl(-\pi (a_1\epsilon+a_2\epsilon^{-1})+\frac{h_{1\bar 1}-h_{2\bar 2}}{\lambda-\lambda^{-1}}\psi\Bigr)^2,
\end{aligned}
\end{equation*}
which,  by Cauchy's inequality, (\ref{171040}) and (\ref{445}),  is bigger than
\begin{equation*}
-2(a_1\epsilon+a_2\epsilon^{-1})^2-\frac{2}{\pi^2}\parallel\!\psi\!\parallel^2_{C^0}
\geq -2(a_1\epsilon+a_2\epsilon^{-1})^2-C(r_0,l)\epsilon^{2l-2}.
\end{equation*}
Thus we obtain
\begin{equation}\label{171023}
I_{22}\geq I_{22}^1\geq -2(a_1\epsilon+a_2\epsilon^{-1})^2-C(r_0,l)\epsilon^{2l-2}.
\end{equation}

Finally, we deal with $I_{23}$. Since $\textup{Tr}\hat\Theta_0$ is a $\partial\overline\partial$-exact form, by Stokes' theorem,
\begin{equation*}\label{171201}
2I_{23}=-\frac 1 {4\pi^2}\int_X(s_{12}\wedge \overline s_{12}-s_{21}\wedge\overline s_{21})\wedge \textup{Tr}\hat\Theta_0
\end{equation*}
By (\ref{171016}) we have
\begin{equation*}
\textup{Tr}\hat\Theta_0=-\pi i\frac{\partial (w_1^\ast+w_2^\ast)}{\partial z}dz\wedge d\overline w-\pi i
\overline{\frac{\partial (w_1^\ast+w_2^\ast)}{\partial z}}d\overline z\wedge d w.
\end{equation*}
Combined with (\ref{171009}), direct calculation yields
\begin{equation*}
-\frac{1}{4\pi^2}(s_{12}\wedge \overline s_{12}-s_{21}\wedge \overline s_{21})\wedge\textup{Tr}\hat\Theta_0=\frac{2}{\pi} \textup{Im}\Bigl((b_1\overline b_2+b_3\overline b_4)\overline {\frac{\partial (w_1^\ast+w_2^\ast)}{\partial z}}
\Bigr)\frac{\omega^2_\epsilon}{2},
\end{equation*}
which, by the triangle inequality and Cauchy's inequality, is bigger than
\begin{equation*}
-\frac 1 \pi\Bigl|\frac{\partial(w_1^\ast+w_2^\ast)}{\partial z}\Bigr|(|s_{12}|_\epsilon^2+|s_{21}|_\epsilon^2)\frac{\omega_\epsilon^2}{2}.
\end{equation*}
In Section 2, we picked on $U_a$ $w_2^\ast=-w_1^\ast=-\sqrt{z}$, and hence, $|\frac{\partial(w_1^\ast+w_2^\ast)}{\partial z}|=0$.
So we can take a localization of $\varphi$ in Section 2 such that
\begin{equation*}
\Bigl|\frac{\partial(w_1^\ast+w_2^\ast)}{\partial z}\Bigr|_{C^0}
\end{equation*}
is bounded by a constant. We fix such a localization and denote such a constant by $C(\varphi)$. Thus,
for sufficiently small $\epsilon>0$
\begin{equation}\label{171204}
2I_{23}\geq -C(\varphi)\int_X(|s_{12}|^2_\epsilon +|s_{21}|^2_\epsilon)\geq -\epsilon^{-1} \int_X(|s_{12}|^2_\epsilon +|s_{21}|^2_\epsilon).
\end{equation}

Now combining (\ref{171205}) with (\ref{171210}), (\ref{171203}), (\ref{171022}),  (\ref{171023}) and (\ref{171204}), at last we finish the proof.
\end{proof}

Combining Lemmas \ref{170980} and \ref{170981}, we arrive at
\begin{equation}\label{171081}
\begin{aligned}
&\epsilon^{-1}(|a_1|+|a_2|+|a_3|+1)\int_X(|s_{12}|^2_\epsilon+|s_{21}|^2_\epsilon)
\geq -2a_1a_2+2|a_3|^2\\
&-\int_Xc_2(V)+C(r_0)\epsilon^4-2(a_1\epsilon+a_2\epsilon^{-1})^2-C(r_0,l)\epsilon^{2l-2}.
\end{aligned}
\end{equation}

\begin{lemm}\label{171096}
For any integer $l\geq4$, if $|a_1\epsilon+a_2\epsilon^{-1}|\leq\epsilon^3$, then there exists a constant $C$ depending on $l$, $r_0$ and $\deg q$
such that for any sufficiently small $\epsilon>0$,
\begin{equation*}
\lambda_0\leq 1+C\epsilon^{l-6}.
\end{equation*}
\end{lemm}
\begin{proof}
If $l\geq 4$ and $|a_1\epsilon+a_2\epsilon^{-1}|\leq \epsilon^3$, then there exists a positive constant $C_1(r_0,l)$ such that for sufficient small $\epsilon>0$,
\begin{equation*}
C(r_0)\epsilon^4-2(a_1\epsilon+a_2\epsilon^{-1})^2-C(r_0,l)\epsilon^{2l-2}\geq C_1(r_0,l)\epsilon^4.
\end{equation*}
Hence by (\ref{171081}), we have
\begin{equation}\label{171091}
\int_X(|s_{12}|^2_\epsilon+|s_{21}|^2_\epsilon)\geq \frac{\epsilon (-2a_1a_2+2|a_3|^2-\int_Xc_2(V))+C_1(r_0,l)\epsilon^5}{|a_1|+|a_2|+|a_3|+1}.
\end{equation}

If $|a_1\epsilon+a_2\epsilon^{-1}|\leq \epsilon^3$, then $a_1=a_2=0$ or $a_1a_2<0$, since
 $a_1$ and $a_2$ are integers and $\epsilon>0$ is sufficiently small. Hence
\begin{equation}\label{171092}
-2a_1a_2+2|a_3|^2\geq 2(|a_1|+|a_2|+|a_3|).
\end{equation}
 On the other hand, since $V=L\oplus L^{-1}$, $c_2(V)=-c^2_1(L)$. Then by (\ref{170974}) we have
\begin{equation}\label{171090}
\int_Xc_2(V)=-2a_1a_2+2|a_3|^2-2|a_4|^2\leq -2a_1a_2+2|a_3|^2.
\end{equation}

Our goal is to prove
\begin{equation}\label{20180601}
\int_X(|s_{12}|^2_\epsilon+|s_{21}|^2_\epsilon)\geq C\epsilon^{-5}
\end{equation}
by considering the following two cases:
\begin{equation}\label{20180602}
-2a_1a_2+2|a_3|^2\geq 2 \int_Xc_2(V),
\end{equation}
or
\begin{equation}\label{20180603}
-2a_1a_2+2|a_3|^2<2\int_X c_2(V).
\end{equation}
If assumption (\ref{20180602}) holds, then by (\ref{171092}),
\begin{equation*}\label{171093}
-2a_1a_2+2|a_3|^2-\int_Xc_2(V)\geq -a_1a_2+|a_3|^2\geq |a_1|+|a_2|+|a_3|.
\end{equation*}
Combined with (\ref{171091}), since $|a_1|+|a_2|+|a_3|\geq 1$, we get
\begin{equation*}\label{171253}
\int_X(|s_{12}|^2_\epsilon+|s_{21}|^2_\epsilon)\geq \frac{ \epsilon (|a_1|+|a_2|+|a_3|)}{|a_1|+a_2|+
|a_3|+1}\geq \frac \epsilon 2.
\end{equation*}
If assumption (\ref{20180603}) holds,
 then combined it with (\ref{171092}), we have   $|a_1|+|a_2|+|a_3|<\int_Xc_2(V)$. Hence from (\ref{171091}), we get
\begin{equation*}\label{171254}
\int_X| (s_{12}|^2_\epsilon+|s_{21}|^2_\epsilon)\geq \frac{C_1(r_0,l)\epsilon^5}{\int_Xc_2(V)+1}\geq C\epsilon^{-5},
\end{equation*}
where by (\ref{10}) the positive constant $C$ depends on $l$, $r_0$ and $\deg q$. Hence inequality (\ref{20180601}) holds.

Now combining  inequality (\ref{20180601}) with (\ref{170978}),  we can finish the  proof of this lemma by the similar arguments in the proof of Lemma \ref{171095}. We omit it here.
\end{proof}

We are ready to give the proof of the lemma. If $\inf_{x\in X}\tau(x)>2$, then combining Lemmas \ref{171095} and \ref{171096} yields
\begin{equation*}
\inf_{x\in X}\tau(x)=\lambda_0+\frac 1 {\lambda_0}\leq \lambda_0+1\leq 2+C\epsilon^{l-6}.
\end{equation*}
\end{proof}

\section{The higher order estimates}
In this section, we prove Theorem \ref{1901001}.
\begin{proof}
We start from the formula
\begin{equation}\label{804}
\tilde H_{1,\epsilon}=B^t(\check{H}_\epsilon)^t\overline{B}
\end{equation}
which can be proved  by (\ref{802}) as follows:
\begin{eqnarray*}
\begin{aligned}
(\tilde H_{1,\epsilon})_{i\bar j}=&H_{1,\epsilon}(\tilde\mu_i,\tilde\mu_j)
=H_{0,\epsilon}(H_{\epsilon}(\tilde\mu_i),\tilde\mu_j)
=H_{0,\epsilon}(H_\epsilon(b_{ki}\check \mu_k),b_{lj}\check\mu_l)\\
=&b_{ki}\overline {b}_{lj}H_{0,\epsilon}((\check H_\epsilon)_{mk}\check\mu_m,\check\mu_l)
=b_{ki}\overline b_{lj}(\check H_\epsilon)_{lk}.
\end{aligned}
\end{eqnarray*}
We will use the notations $\tilde\Theta_1$ and $\tilde\Theta_0$ etc. as in the above section.
Since $H_{1,\epsilon}$ is the HYM metric, we use the formula (\ref{b0}) to $\tilde H_{1,\epsilon}$ and  (\ref{804}) to get
\begin{equation*}\label{13}
0=\Lambda_{\omega_\epsilon}\tilde\Theta_1=
\Lambda_{\omega_\epsilon}\bigl(\overline\partial\bigl(\partial(B^t(\check
H_\epsilon)^t\overline B\bigr)\bigl(B^t(\check
H_\epsilon)^t\overline B)\bigr)^{-1}\bigr)^t,
\end{equation*}
which is equivalent to
\begin{equation}\label{15}
\check H_\epsilon B
\Lambda_{\omega_\epsilon}\bigl(\overline\partial\bigl(\partial(B^t(\check
H_\epsilon)^t\overline B\bigr)\bigl(B^t(\check
H_\epsilon)^t\overline B)\bigr)^{-1}\bigr)^t B^{-1}=0.
\end{equation}
On the other hand, (\ref{802}) implies $\tilde
H_{0,\epsilon}=B^t\overline B$. Hence formula (\ref{b0}) also gives
\begin{equation*}\label{14}
\Lambda_{\omega_\epsilon}\tilde\Theta_0=\Lambda_{\omega_\epsilon}\overline\partial(\partial(B^t\overline
B)(B^t\overline B)^{-1})^t
\end{equation*}
or
\begin{equation}\label{16}
B \Lambda_{\omega_\epsilon}\overline\partial(\partial(B^t\overline
B)(B^t\overline B)^{-1})^t  B^{-1}\check H_\epsilon=B
\Lambda_{\omega_\epsilon}\tilde\Theta_0 B^{-1}\check H_\epsilon.
\end{equation}
Combining (\ref{a8}) with (\ref{802}) and using (\ref{900}), we have
\begin{equation*}
\Lambda_{\omega_\epsilon} B\tilde\Theta_0B^{-1}
=\Lambda_{\omega_\epsilon}\check\Theta_0=\Lambda_{\omega_\epsilon}
\hat\Theta_0.
\end{equation*}
Now subtracting (\ref{16}) from (\ref{15}), expanding the left hand side of the
derived equation, and properly adjusting some terms,  we arrive at the
system:
\begin{eqnarray}\label{1209065}
\begin{aligned}
0=&\ \ i\Lambda_{\omega_\epsilon}\overline\partial\partial \mathcal H_\epsilon
-i\Lambda_{\omega_\epsilon}\overline\partial \mathcal H_\epsilon\check H_\epsilon^{-1}\wedge \partial \mathcal H_\epsilon\\
&-i\Lambda_{\omega_\epsilon} \check H_\epsilon\overline\partial \log B\check
H_\epsilon^{-1}
\wedge \partial \mathcal H_\epsilon-i\Lambda_{\omega_\epsilon}\overline\partial \mathcal H_\epsilon \check
H_\epsilon^{-1}
{(\partial \log \overline B)^t} \check H_\epsilon\\
&-i\Lambda_{\omega_\epsilon} \partial \mathcal H_\epsilon\wedge \overline\partial \log B
-i\Lambda_{\omega_\epsilon}  {(\partial \log \overline B)^t}\wedge\overline\partial\mathcal H_\epsilon\\
&-i\Lambda_{\omega_\epsilon}\mathcal H_\epsilon\partial(\overline\partial \log B)
+i\Lambda_{\omega_\epsilon}\partial(\overline\partial \log B) \mathcal H_\epsilon\\
&-i\Lambda_{\omega_\epsilon} \check H_\epsilon \overline\partial \log B\mathfrak
H_\epsilon
\wedge  {(\partial \log\overline  B)^t}\check H_\epsilon+i\Lambda_{\omega_\epsilon} \check H_\epsilon \mathfrak H_\epsilon \overline\partial\log B
\wedge {(\partial \log\overline B)^t} \check H_\epsilon\\
&-i\Lambda_{\omega_\epsilon}  {(\partial \log\overline B)^t} \mathcal H_\epsilon\wedge\overline\partial \log B+i\Lambda_{\omega_\epsilon}  {(\partial \log\overline  B)^t}\wedge \overline\partial \log B \mathcal H_\epsilon\\
&+i\Lambda_{\omega_\epsilon}\hat\Theta_0\check H_\epsilon,
\end{aligned}
\end{eqnarray}
 where for brevity, we have introduced the notations:
\begin{align*}
&\mathcal H_\epsilon=\check H_\epsilon-I, \ \ \ \ \ \mathfrak
H_\epsilon=\check H^{-1}_\epsilon-I,\\
&\partial\log\overline B=\partial \overline B\,\overline B^{-1},\ \ \overline\partial\log
B=\overline\partial B B^{-1}.
\end{align*}
Hence $\partial\log \overline B=\overline{\overline \partial \log B}$.

We introduce
\begin{equation*}
x_{i,\epsilon}=\epsilon^{- 1/ 2} x_i,\ \
y_{i,\epsilon}=\epsilon^{ 1/ 2}y_i\ \ {\textup{for}}\
\ i=1,2
\end{equation*}
and
\begin{equation}\label{b7}
z_\epsilon=\epsilon^{-1/ 2}z,\ \
w_{\epsilon}=\epsilon^{ 1/ 2}w.
\end{equation}
Then the metric  (\ref{403}) can be rewritten as the Euclidean metric
\begin{equation}\label{b4}
\omega_\epsilon=dy_{1,\epsilon}\wedge
dy_{2,\epsilon}+dx_{1,\epsilon}\wedge dx_{2,\epsilon}.
\end{equation}
We will use
$\bigtriangledown^k_\epsilon$, $\bigtriangleup_\epsilon$ and
$C^{k}_\epsilon$, respectively,  to denote the $k$-th
 covariant derivatives, the Laplace operator and the $C^k$-norm with the new
 coordinates.
Hence, for any $f\in C^\infty(\mathcal U)$,
\begin{equation*}
\bigtriangleup_\epsilon f=\frac{\partial^2 f}{\partial x_{1,\epsilon}^2}+\frac{\partial^2f}{\partial x_{2,\epsilon}^2}+\frac{\partial^2f}{\partial y_{1,\epsilon}^2}+\frac{\partial^2f}{\partial y_{2,\epsilon}^2}
\end{equation*}
is the same as equality (\ref{o1});
\begin{equation*}
\parallel\!\bigtriangledown^j_\epsilon f\!\parallel_{C^0}\leq \epsilon^{-\frac j 2}\parallel\!\bigtriangledown^j f\!\parallel_{C^0};
\end{equation*}
and
\begin{equation}\label{201801}
\parallel\!f\!\parallel_{C^j_\epsilon}=\sum_{i=0}^j\parallel\!\bigtriangledown_\epsilon^i f\!\parallel_{C^0}\leq \epsilon^{-\frac j 2 }\parallel\!f\!\parallel_{C^j}.
\end{equation}
Similarly, we also have
\begin{equation}\label{20180509}
\parallel\!f\!\parallel_{C^j}\leq \epsilon^{-\frac j 2}\parallel\!f\!\parallel_{C^j_\epsilon}.
\end{equation}

In this way, system (\ref{1209065})
 can be rewritten as
\begin{equation}\label{901}
I_2=I_{1^2}+I_1+I_0+I_{-1},
\end{equation}
where
\begin{align*}
I_2&=\frac{\partial^2\mathcal
H_\epsilon}{\partial z_{\epsilon}\partial\overline
z_\epsilon}+\frac{\partial^2\mathcal H_\epsilon}{\partial
w_{\epsilon}\partial\overline w_\epsilon};\\
I_{1^2}&=\frac{\partial \mathcal H_\epsilon}{\partial\overline
z_{\epsilon}}\check H_\epsilon^{-1}\frac{\partial \mathcal
H_\epsilon}{\partial z_{\epsilon}} +\frac{\partial \mathcal
H_\epsilon}{\partial\overline w_{\epsilon}}\check H_\epsilon^{-1}
\frac{\partial \mathcal H_\epsilon}{\partial w_{\epsilon}};\\
I_1&=\check H_\epsilon\frac{\partial \log B}{\partial\overline
z_{\epsilon}}\check H_\epsilon^{-1}\frac{\partial \mathcal
H_\epsilon}{\partial  z_{\epsilon}} +\check H_\epsilon\frac{\partial
\log B}{\partial\overline w_{\epsilon}}\check H_\epsilon^{-1}
\frac{\partial \mathcal H_\epsilon}{\partial  w_{\epsilon}}\\
&+\frac{\partial \mathcal H_\epsilon}{\partial\overline
z_{\epsilon}}\check H_\epsilon^{-1}{\Bigl(\overline{\frac{\partial
\log B}{\partial \overline z_{\epsilon}}}\Bigr)^t}\check H_\epsilon
+\frac{\partial \mathcal H_\epsilon}{\partial\overline
w_{\epsilon}}\check H_\epsilon^{-1}
{\Bigl(\overline{\frac{\partial \log B}{\partial\overline w_{\epsilon}}}\Bigr)^t}\check H_\epsilon\\
&-\frac{\partial \mathcal H_\epsilon}{\partial
z_{\epsilon}}\frac{\partial \log B}{\partial\overline  z_{\epsilon}}
-\frac{\partial\mathcal H_\epsilon}{\partial w_{\epsilon}}
\frac{\partial \log B}{\partial\overline  w_{\epsilon}}\\
&-{\Bigl(\overline{\frac{\partial \log B}{\partial\overline
z_{\epsilon}}}\Bigr)^t}\frac{\partial \mathcal
H_\epsilon}{\partial\overline
z_{\epsilon}}-{\Bigl(\overline{\frac{\partial \log B}{\partial\overline
 w_{\epsilon}}}\Bigr)^t}\frac{\partial \mathcal
H_\epsilon}{\partial\overline w_{\epsilon}};\\
I_0&=-\mathcal H_\epsilon\frac{\partial^2\log B}{\partial
z_{\epsilon}\partial\overline z_\epsilon}
-\mathcal H_\epsilon \frac{\partial^2\log B}{\partial w_{\epsilon}\partial\overline w_\epsilon}\\
&+\frac{\partial^2\log B}{\partial z_{\epsilon}\partial\overline
z_\epsilon} \mathcal H_\epsilon+ \frac{\partial^2\log B}{\partial
w_{\epsilon}\partial\overline w_\epsilon}
\mathcal H_\epsilon\\
&+\check H_\epsilon \frac{\partial \log B}{\partial\overline
z_{\epsilon}}\mathfrak H_\epsilon{\Bigl(\overline{\frac{\partial \log
B}{\partial\overline z_{\epsilon}}}\Bigr)^t}\check H_\epsilon +\check
H_\epsilon\frac{\partial \log B}{\partial\overline
w_{\epsilon}}\mathfrak H_\epsilon
{\Bigl(\overline{\frac{\partial \log B}{\partial\overline w_{\epsilon}}}\Bigr)^t}\check H_\epsilon\\
&-\check H_\epsilon \mathfrak H_\epsilon\frac{\partial \log
B}{\partial\overline z_{\epsilon}}{\Bigl(\overline{\frac{\partial
\log B}{\partial\overline z_{\epsilon}}}\Bigr)^t}\check H_\epsilon
-\check H_\epsilon\mathfrak H_\epsilon\frac{\partial \log B}
{\partial\overline w_{\epsilon}}{\Bigl(\overline{\frac{\partial \log
B}{\partial\overline w_{\epsilon}}}\Bigr)^t}\check H_\epsilon\\
&-{\Bigl(\overline{\frac{\partial \log B}{\partial\overline
z_{\epsilon}}}\Bigr)^t}\mathcal H_\epsilon \frac{\partial \log
B}{\partial\overline z_{\epsilon}} -{\Bigl(\overline{\frac{\partial
\log B}{\partial\overline w_{\epsilon}}}\Bigr)^t}
\mathcal H_\epsilon\frac{\partial \log B}{\partial\overline w_{\epsilon}}\\
&+{\overline{\Bigl(\frac{\partial \log B}{\partial\overline
z_{\epsilon}}}\Bigr)^t} \frac{\partial \log B}{\partial\overline
z_{\epsilon}}\mathcal H_\epsilon+{\Bigl(\overline{\frac{\partial \log
B}{\partial\overline w_{\epsilon}}}\Bigr)^t}
\frac{\partial \log B}{\partial\overline w_{\epsilon}}\mathcal H_\epsilon;\\
I_{-1}&=i\Lambda_{\omega_\epsilon}\hat\Theta_0\check H_\epsilon.
\end{align*}

We observe that: a) All terms in $I_1$ have a factor of the first order
derivatives of $\mathcal H_\epsilon$, while no terms in $I_0$
contain such a factor; b) All terms in $I_0$ have a factor $\mathcal H_\epsilon$ or $\mathfrak
 H_\epsilon$, which, by assumption (\ref{1901003}), satisfies
\begin{equation}\label{01}
\parallel\!\mathcal
H_\epsilon\!\parallel_{C^0}\leq C\epsilon^{l}\qquad\textup{or}\qquad
\parallel\!\mathfrak H_\epsilon\!\parallel_{C^0}\leq
C\epsilon^{l}.
\end{equation}
Hence, for a positive $l$ and sufficiently small $\epsilon>0$,
$\parallel\!\mathcal H_\epsilon\!\parallel_{C^0}$ and
$\parallel\!\mathfrak H_\epsilon\!\parallel_{C^0}$ are indeed very small.
For the term $I_{-1}$, by (\ref{800}) we have
\begin{equation*}\label{a9}
\parallel\!\Lambda_{\omega_\epsilon}
\hat\Theta_0 \!\parallel_{C^j}\leq C\epsilon^{l-j-1}.
\end{equation*}
Then by (\ref{201801}), we get
\begin{eqnarray}\label{20180507}
\parallel\!\Lambda_{\omega_\epsilon} \hat\Theta_0\!\parallel_{C_\epsilon^j}
\leq \epsilon^{-j/2}\parallel\!\Lambda_{\omega_\epsilon}
\hat\Theta_0\!\parallel_{C^j}\leq C \epsilon^{l-\frac{3j}{2}-1}.
\end{eqnarray}
In particular,  we have
\begin{equation}\label{171202}
\parallel\!I_{-1}\!\parallel_{C^0}\leq C\epsilon^{l-1}.
\end{equation}

We will estimate the factors  coming from $\overline\partial \log B$ and
$\partial\overline\partial\log B$ in $I_{1}$ and $I_0$. The most complicated  case is over
$\mathcal U_a$.  (Note that $\mathcal U_0$ has been shrunk in Section
5.) Hence, we will omit the other cases and only do estimates to this
case. By (\ref{802}) and (\ref{801}),
\begin{equation*}
B=e^{\frac 1 2
g_a}\begin{pmatrix}\kappa^{-1}&0\\0&\kappa\end{pmatrix}A.
\end{equation*}
By (\ref{1003}) and (\ref{171018}), $\kappa$ can be written as
\begin{equation*}
\kappa=\left\{\begin{array}{ll}r^{\frac 1 4}\phi^{-\frac 1 4}=r^{\frac 1 4}(1+O(u_\epsilon-\frac 12\ln r))^{-\frac 14}&\ \
\textup{on}\ U_a(2r_0)-U_a(r_0)\\
e^{\frac 12 u_\epsilon}&\ \ \textup{on}\ U_a(r_0)).\end{array}\right.
\end{equation*}
Since $\frac 1 2 \ln r\leq u_\epsilon\leq\frac 1 2 \ln (2r_0)$ by Proposition \ref{prop} and the first inequality in Lemma \ref{171250}, we have
\begin{equation}\label{20180201}
\parallel\!\kappa\!\parallel_{C^0}\leq C,\qquad \parallel\!r\kappa^{-2}\!\parallel_{C^0}\leq  C.
\end{equation}
Since $g_a$ is harmonic and $A$ is holomorphic for the variable $z$, direct calculation gives
\begin{align*}
&\frac{\partial\log B}{\partial\overline z_\epsilon}=\epsilon^{\frac 1 2}\frac 1 2 \frac{\partial
g_a}{\partial\overline
z}I+\epsilon^{\frac 1 2}\frac{\partial\log\kappa}{\partial\overline
z}\begin{pmatrix} -1&0\\
0&1\end{pmatrix},\\
&\frac{\partial \log
B}{\partial\overline w_\epsilon}=\epsilon^{-\frac 1 2}\pi i\begin{pmatrix}0& z\kappa^{-2}\\ \kappa^2&0\end{pmatrix},
 \\
&\frac{\partial^2\log B}{\partial z_\epsilon\partial\overline
z_\epsilon}=\epsilon\frac{\partial^2\log \kappa}{\partial z\partial\overline
z}\begin{pmatrix}-1&0\\0&1\end{pmatrix},\quad \frac{\partial^2\log B}{\partial w_\epsilon\partial\overline
w_\epsilon}=0.
\end{align*}
By Theorem \ref{solution} and (\ref{20180201}), we can easily get
\begin{align}
&\Bigl|\!\Bigl|\frac{\partial \log B}{\partial \overline
z_\epsilon}\Bigr|\!\Bigr|_{C^0}\leq C\epsilon^{-\frac {1} 2},\qquad
\Bigl|\!\Bigl|\frac{\partial\log
B}{\partial\overline w_\epsilon}\Bigl|\!\Bigl|_{C^0}\leq C\epsilon^{-\frac 1 2},\label{20180202}\\
&\Bigl|\!\Bigl|\frac{\partial^2\log B}{\partial z_\epsilon\partial\overline z_\epsilon}\Bigl|\!\Bigl|_{C^0}\leq C\epsilon^{-3},\qquad
\Bigl|\!\Bigl|\frac{\partial^2\log B}{\partial w_\epsilon\partial\overline w_\epsilon}\Bigl|\!\Bigl|_{C^0}=0.
\end{align}
Combined with (\ref{01}), we
obtain
\begin{equation}\label{08}
\parallel\! I_0\!\parallel_{C^0}\leq C\epsilon^{l-3}.
\end{equation}

We need more estimates for preparations. Since
\begin{align*}
&\frac{\partial(z\kappa^{-2})}{\partial z}=\kappa^{-2}-2z\kappa^{-2}\frac{\partial\log\kappa}{\partial z},\ \ \ \frac{\partial\kappa^2}{\partial z}=2\kappa^2\frac{\partial\log\kappa}{\partial z},\\
&\frac{\partial(z\kappa^{-2})}{\partial\overline z}=-2z\kappa^{-2}\frac{\partial\log\kappa}{\partial\overline z},\ \ \ \ \ \frac{\partial \kappa^2}{\partial\overline z}=2\kappa^2\frac{\partial\log\kappa^2}{\partial\overline z},
\end{align*}
and $\kappa$ only depends on the variable $z$,
$\bigtriangledown^j_\epsilon\frac{\partial\log B}{\partial\overline w_\epsilon}$  contains the terms:
\begin{equation*}
\epsilon^{\frac{j-1}{2}}z\kappa^{-2}\bigtriangledown^{j_1}\log\kappa\cdots\bigtriangledown^{j_a}\log\kappa,\ \ \ \epsilon^{\frac{j-1}{2}}
\kappa^2\bigtriangledown^{j_1}\log\kappa\cdots\bigtriangledown^{j_a}\log\kappa,
\end{equation*}
where  $j_1>0,\cdots,j_a>0,\,j_1+\cdots+j_a=j$; and
\begin{equation*}
\epsilon^{\frac{j-1}{2}}\kappa^{-2}\bigtriangledown^{j_1}\log\kappa\cdots\bigtriangledown^{j_a}\log\kappa
\end{equation*}
where  $j_1>0,\cdots, j_a>0,\, j_1+\cdots+j_a=j-1$.
By Theorem \ref{solution},
\begin{equation}\label{20180301}
\parallel\!\bigtriangledown^{i}\log\kappa\!\parallel_{C^0}\leq C\epsilon^{-3i+2}.
\end{equation}
Combined with (\ref{20180201}), we have
\begin{equation*}
\Bigl|\bigtriangledown^j_\epsilon\frac{\partial\log B}{\partial\overline w_\epsilon}\Bigr|\leq C\epsilon^{\frac{-5j+3}{2}}+C\epsilon^{\frac{-5j+9}{2}}\kappa^{-2}.
\end{equation*}
Hence by Lemma \ref{20180901}, we have
\begin{equation}\label{20180403}
\Bigl|\!\Bigl|\bigtriangledown^j_\epsilon\frac{\partial\log B}{\partial \overline w_\epsilon}\Big|\!\Bigl|_{L^p}\leq C\epsilon^{\frac{-5j+3}{2}}.
\end{equation}
By (\ref{20180301}), we also have
\begin{equation}\label{20180404}
\Bigl|\!\Bigl|\bigtriangledown^j_\epsilon\frac{\partial\log B}{\partial\overline z_\epsilon}\Bigr|\!\Bigr|_{C^0}=\epsilon^{\frac{j+1}{2}}\Bigl|\!\Bigl|\bigtriangledown^j\frac{\partial\log B}{\partial\overline z}\Bigr|\!\Bigr|_{C^0}\leq C\epsilon^{\frac{-5j-1}{2}},
\end{equation}
\begin{equation}\label{20180502}
\Bigl|\!\Bigl|\bigtriangledown^j_\epsilon\frac{\partial^2\log B}{\partial z_\epsilon\partial\overline z_\epsilon}\Bigr|\!\Bigr|_{C^0}=\epsilon^{\frac{j+2}{2}}\Bigl|\!\Bigl|\bigtriangledown^j\frac{\partial^2\log B}{\partial z\partial\overline z}\Bigr|\!\Bigr|_{C^0}\leq C\epsilon^{\frac{-5j-6}{2}}.
\end{equation}


Equipped with the preparations, we begin to estimate
$\parallel\!\bigtriangledown_\epsilon^j\mathcal
H_\epsilon\!\parallel_{L^2(\mathcal U)}$ for $j\geq 1$. The approach
is standard.
We must be very careful
when dealing with $\epsilon$. We assume that $\mathcal H_\epsilon$
has a compact support in $\mathcal U_\alpha$, otherwise we can
shrink the open subsets $\mathcal U_\alpha$ to $\mathcal U_\alpha'$
so that they still form an open cover of $X$ and then use cut-off
functions as the proofs of Lemma \ref{20180708} and Proposition \ref{20180902}. Here we note that
we shrink $\mathcal U_\alpha$ to $\mathcal U_\alpha'$ by shrinking
$U_\alpha$ to $U_\alpha'$ in $B$ and hence the cut-off functions
$\chi$ can be taken only dependent on the variable $z$. Thus
$|\frac{\partial\chi}{\partial z_\epsilon}|=\epsilon^{\frac 1
2}|\frac{\partial\chi}{\partial z}|$ and
$\frac{\partial\chi}{\partial w_\epsilon}=0$. This is good enough
for us to do estimates.  We will omit the domain $\mathcal U$ of
integration. We will take $C$ as the generic constant which depends
on $l,k$, and $r_0$, etc.

We first estimate $\parallel\!\bigtriangledown_\epsilon\mathcal H_\epsilon\!\parallel_{L^2}$.
Since $\mathcal H_\epsilon$ is Hermitian symmetric,  by system (\ref{901}) and inequalities (\ref{20180202}) we have
\begin{eqnarray*}
\begin{aligned}
&\int|\!\bigtriangledown_\epsilon \mathcal H_\epsilon|^2=-\int
 \Tr (\mathcal H_\epsilon\bigtriangleup_\epsilon \mathcal
H_\epsilon)\\
=&-4\int\Tr\bigl(\mathcal
H_\epsilon\cdot(I_{1^2}+I_1+I_0+I_{-1})\bigr)\\
\leq &C\parallel\!\mathcal H_\epsilon\!\parallel_{C^0}\Bigl(\int|\!\bigtriangledown_\epsilon \mathcal H_\epsilon|^2
+\epsilon^{-\frac 1 2}\int|\!\bigtriangledown_\epsilon\mathcal H_\epsilon|+\parallel\! I_0\!\parallel_{C^0}+\parallel\!I_{-1}\!\parallel_{C^0}\Bigr).
\end{aligned}
\end{eqnarray*}
Since $l$ is positive, according to (\ref{01}), when $\epsilon$ is small enough,
$\parallel\!\mathcal H_\epsilon\!\parallel_{C^0}$ is very small and
hence the first term of right hand side can be controlled by the term of left hand side. By Cauchy's inequality, the second term is less than
\begin{equation*}
\frac 1 2 \int|\!\bigtriangledown_\epsilon \mathcal H_\epsilon|^2+C\epsilon^{-1}\parallel\!\mathcal H_\epsilon \!\parallel^2_{C^0}.
\end{equation*}
Hence, we have
\begin{equation*}
\int|\!\bigtriangledown_\epsilon\mathcal H_\epsilon|^2\leq C\parallel\!\mathcal H_\epsilon\!\parallel_{C^0}(\epsilon^{-1}\parallel\mathcal\!\mathcal H_\epsilon\!\parallel_{C^0}+\parallel\!I_0\!\parallel_{C^0}+\parallel I_{-1}\parallel_{C^0}).
\end{equation*}
Combined with (\ref{01}), (\ref{08}), and (\ref{171202}), at last we obtain when $l>1$
\begin{equation}\label{09}
\parallel\!\bigtriangledown_\epsilon\mathcal
H_\epsilon\!\parallel_{L^2}\leq C\epsilon^{l-\frac 3 2}.
\end{equation}

Next we  estimate $\parallel\!\bigtriangledown^2_\epsilon\mathcal H_\epsilon\!\parallel_{L^2}$.
When $\mathcal H_\epsilon$ has the
compact support, by the formula in Lemma \ref{20180708} we have
\begin{equation*}
\int|\!\bigtriangledown^2_\epsilon\mathcal H_\epsilon|^2=\int
|\bigtriangleup_\epsilon\mathcal
H_\epsilon|^2.
\end{equation*}
Then by system (\ref{901}), Cauchy's inequality, inequalities (\ref{20180202}), (\ref{08}), and (\ref{171202}),  we have
\begin{eqnarray}\label{b6}
\begin{aligned}
&\int |\!\bigtriangledown^2_\epsilon\mathcal H_\epsilon|^2 \leq C\int\bigl(|I_1|^2+|I_{1^2}|^2+|I_0|^2+|I_{-1}|^2\bigr)\\
& \quad\quad \leq C\int|\!\bigtriangledown_\epsilon\mathcal
H_\epsilon|^4+C\epsilon^{-1}\int|\!\bigtriangledown_\epsilon\mathcal
H_\epsilon|^2+C\epsilon^{2l-6}.
\end{aligned}
\end{eqnarray}

We need the Gagliardo-Nirenberg inequality:

\begin{lemm}\cite{Ni}\label{Ni}
Let $f\in L^p(\mathbb R^n)$, $D^m f\in L^q(\mathbb R^n)$, $1\leq
p,q\leq+\infty$. Then for any $i$ $(0\leq i\leq m)$,
there exists a constant $C$ depending only on $m,\,n,\,p,\,q$ and $i$ such that
\begin{equation*}
\parallel\! D^i f\!\parallel_{L^r(\mathbb R^n)}\leq C\parallel\!
f\!\parallel_{L^p(\mathbb R^n)}^{1-\frac i m}\parallel\!
D^mf\!\parallel_{L^q(\mathbb R^n)}^{\frac i m},
\end{equation*}
where
\begin{equation*}
\frac 1 r=\bigl(1-\frac i m\bigr)\frac 1 p+\frac i m\frac 1 q.
\end{equation*}
\end{lemm}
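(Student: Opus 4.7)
The plan is to establish the inequality first in the base case $m=2$, $i=1$ and then bootstrap to general $m, i$ by induction on $m$. For the base case at $p=q=r=2$, integration by parts immediately yields
\[
\int |Df|^2 = -\int f\cdot \Delta f \leq \parallel f\parallel_2\,\parallel \Delta f\parallel_2,
\]
so $\parallel Df\parallel_2 \leq \parallel f\parallel_2^{1/2}\parallel D^2 f\parallel_2^{1/2}$. To reach general $L^p$--$L^q$--$L^r$ exponents satisfying $1/r = 1/(2p) + 1/(2q)$, I would expand $\int|Df|^r = \int |Df|^{r-2}\sum_j (\partial_j f)(\partial_j f)$ and integrate by parts on one copy of $\partial_j f$; the resulting integrand is bounded pointwise by $C |f|\,|D^2 f|\,|Df|^{r-2}$, and H\"older's inequality with exponents $p$, $q$, $r/(r-2)$ yields
\[
\parallel Df\parallel_r^r \leq C\parallel f\parallel_p\,\parallel D^2 f\parallel_q\,\parallel Df\parallel_r^{r-2},
\]
which rearranges to the desired base-case inequality. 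Boundary exponents $p,q\in\{1,\infty\}$ can be handled by a pointwise Taylor estimate $|\partial_e f(x)| \leq (|f(x+te)| + |f(x-te)|)/(2t) + (t/2)\sup|D^2 f|$ followed by optimization in $t$.

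For general $m$ and $i$, I would apply the base case to $D^{i-1} f$ (viewed as the new ``$f$''), obtaining bounds on $\parallel D^i f\parallel$ in the form of a geometric mean of a norm of $D^{i-1}f$ and a norm of $D^{i+1}f$. Strong induction on $m$ then telescopes the right-hand side down to a product $\parallel f\parallel_p^{1-i/m}\parallel D^m f\parallel_q^{i/m}$. The scaling relation $f \mapsto f(\lambda x)$ forces $1/r = (1-i/m)/p + (i/m)/q$ and serves as a consistency check on the bookkeeping of the intermediate exponents.

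The main obstacle is precisely this exponent bookkeeping through the induction, together with careful handling of boundary exponents in $\{1,\infty\}$: each inductive step introduces a pair of intermediate Lebesgue exponents, and one must check that the chain terminates with the claimed $p$ and $q$. A cleaner but heavier alternative is to work throughout in Bessel potential spaces $H^{s,p}$ and invoke complex (Riesz--Thorin or Stein) interpolation directly; this removes the combinatorial step but introduces fractional derivative machinery, which is more than needed for the analytic purpose the lemma serves in this paper.
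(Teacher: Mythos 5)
The paper does not prove this lemma; it is a cited black box, namely the Gagliardo--Nirenberg interpolation inequality from Nirenberg's 1959 paper \cite{Ni}, and the text invokes it only once, in the single instance $n=2$, $m=2$, $i=1$, $p=\infty$, $q=2$, $r=4$. So there is no paper proof to compare against, and the meaningful question is whether your sketch is sound and whether it would suffice for the paper's actual use.

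For the instance the paper needs, your base-case argument is correct and self-contained: writing $\int|Df|^4 = \int|Df|^2\sum_j(\partial_jf)^2$, integrating by parts on one factor $\partial_jf$, bounding the resulting integrand by $C|f|\,|D^2f|\,|Df|^2$, and applying H\"older with exponents $\infty,2,2$ gives
\[
\parallel Df\parallel_4^4 \le C\parallel f\parallel_\infty\,\parallel D^2 f\parallel_2\,\parallel Df\parallel_4^2,
\]
which is exactly the inequality used after (\ref{b6}). This is arguably a better choice than citing the general theorem, since the special case is elementary.

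For the full statement as you set it up, there are two real gaps. First, your H\"older step with exponents $p$, $q$, $r/(r-2)$ requires $r\ge 2$, i.e.\ $1/p+1/q\le 1$; under only the stated hypothesis $1\le p,q\le\infty$ this can fail (e.g.\ $p=q=1$ forces $r=1$), and you would need a separate mechanism (duality, or a different pairing) for small $r$. Second, the step from the base case to general $(m,i)$ is a Kolmogorov--Landau type iteration: from $a_i^2\le C\,a_{i-1}a_{i+1}$ with $a_i=\parallel D^i f\parallel_{r_i}$ and $1/r_i$ in arithmetic progression, one must actually derive $a_i\le C'\,a_0^{1-i/m}a_m^{i/m}$; this is true but not a mere telescoping -- the standard route is to pass to $b_i=\log a_i$, note that the differences $b_i-b_{i-1}$ are almost monotone up to $\log C$ increments, and sum -- and you should also note that for $1<q<\infty$ Nirenberg's theorem genuinely excludes the endpoint $\theta=i/m$ when $m-i-n/q$ is a nonnegative integer, so some care with the admissible range is needed if you want the lemma at the stated generality. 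Minor technical points (regularizing $|Df|^{r-2}$ near critical points, a density argument to justify dividing by $\parallel Df\parallel_r^{r-2}$) are routine but should be mentioned. None of these issues affect the one case the paper uses.
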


Using this lemma  for the case where  $n=4$, $i=1$, $r=4$, $m=2$, $q=2$,
and $p=+\infty$, we get
\begin{equation*}\label{11}
\int |\!\bigtriangledown_\epsilon \mathcal H_\epsilon|^4\leq C
\parallel\!\mathcal
H_\epsilon\!\parallel^2_{C^0}\int|\!\bigtriangledown^2_\epsilon\mathcal
H_\epsilon|^2.
\end{equation*}
Hence,  when $\epsilon$ is small enough, the first
term of the right hand side in (\ref{b6}) can be controlled by the term of the left hand side. Thus, by (\ref{09})
we obtain
\begin{equation}\label{12}
\parallel\!\bigtriangledown_\epsilon^2 \mathcal H_\epsilon\!\parallel_{L^2}\leq
C\epsilon^{l-3}.
\end{equation}

Based upon the proofs of inequalities (\ref{09}) and (\ref{12}), we use the inductive method to prove
that for any nonnegative integer $m$ and positive integer $l$ satisfying $l>\frac 5 2 m$ and for any sufficiently small
positive small $\epsilon$,
\begin{equation}\label{z0}
\parallel\!\bigtriangledown_\epsilon^m\mathcal
H_\epsilon\!\parallel_{L^2}<C\epsilon^{l-\frac 5 2 m}.
\end{equation}
It has been proved for $m=0,\,1,\,2$.
Assume that it holds for any $m\leq k-1$ where $k\geq 3$.
We should prove that it also holds for $m=k$.
We give the sketch as
follows. For convenience, we denote
\begin{equation*}
M(k)=\int|\!\bigtriangledown_\epsilon^k\mathcal
H_\epsilon|^2.
\end{equation*}
By the formula in Lemma \ref{20180708} we have
\begin{equation*}
M(k)\leq C\int |\bigtriangledown_\epsilon^{k-2}\bigtriangleup_\epsilon\mathcal H_\epsilon|^2.
\end{equation*}
Then by using the system (\ref{901}) and the basic inequality, we have
\begin{equation}\label{20180508}
\begin{aligned}
&M(k)\leq C\int|\bigtriangledown_\epsilon^{k-2}(I_{1^2}+I_1+I_0+I_{-1})|^2\\
\leq &C\int(|\bigtriangledown^{k-2}_\epsilon I_{1^2}|^2+|\bigtriangledown^{k-2}_\epsilon I_1|^2+|\bigtriangledown^{k-2}_\epsilon I_0|^2+|\bigtriangledown^{k-2}_\epsilon I_{-1}|^2).
\end{aligned}
\end{equation}
For convenience, we denote each term in the right hand side by $M_{1^2}(k)$, $M_1(k)$, $M_0(k)$, and $M_{-1}(k)$ respectively.

We make the following observations:
\begin{equation*}
M_{1^2}(k)\leq \sum C \int |\bigtriangledown_\epsilon^{i_1}\mathcal H_\epsilon|^2|\bigtriangledown_\epsilon^{i_2}\mathcal H_\epsilon|^2|\bigtriangledown^{i_3}_\epsilon\check H_\epsilon|^2,
\end{equation*}
where $i_1\geq i_2>0,\,i_3\geq 0,\,i_1+i_2+i_3=k$;
\begin{equation*}
M_1(k)\leq\sum  C\int |\bigtriangledown_\epsilon^{i_1}\mathcal H_\epsilon|^2|\bigtriangledown_\epsilon^{i_2}\check H_\epsilon|^2|\bigtriangledown^{i_3}_\epsilon\check H_\epsilon|^2|\bigtriangledown^{j}_\epsilon\log B|^2,
\end{equation*}
where $i_1>0,\,i_2\geq i_3\geq 0,\,j\geq 1,\,i_1+i_2+i_3+j=k$;
\begin{equation*}
\begin{aligned}
&M_0(k)\leq \sum C\int|\bigtriangledown^i_\epsilon\mathcal H_\epsilon|^2\Bigl|\bigtriangledown^{j}_\epsilon\frac{\partial^2\log B}{\partial z_\epsilon\partial\overline z_\epsilon}\Bigr|^2\\
&+\sum C \int |\bigtriangledown_\epsilon^{i_1}\mathcal H_\epsilon|^2|\bigtriangledown_\epsilon^{i_2}\check H_\epsilon|^2|\bigtriangledown^{i_3}_\epsilon\check H_\epsilon|^2|\bigtriangledown^{j_1}_\epsilon\log B|^2|\bigtriangledown^{j_2}_\epsilon\log B|^2,
\end{aligned}
\end{equation*}
where $i\geq 0,\, j\geq 0,\, i+j=k-2$ and $i_1\geq 0,\,i_2\geq i_3\geq 0,\,j_1\geq j_2\geq 1,\,i_1+i_2+i_3+j_1+j_2=k$;
\begin{equation*}
M_{-1}(k)=\sum \int |\bigtriangledown^i_\epsilon\check H_\epsilon|^2|\bigtriangledown^j_\epsilon(i\Lambda_{\omega_\epsilon}\hat\Theta_0)|^2,
\end{equation*}
where $i\geq 0,\, j\geq 0$ and $i+j=k-2$. Here all the constants $C$ depend on $k$; For convenience denote
$\bigtriangledown^0_\epsilon \check H_\epsilon=\check H_\epsilon$ and $\bigtriangledown^0_\epsilon\mathcal H_\epsilon=\mathcal H_\epsilon$.
We note that $\bigtriangledown^i_\epsilon\check H_\epsilon=\bigtriangledown^i_\epsilon \mathcal H_\epsilon$
if $i>0$.

Now we proceed to doing estimates.
By  Lemma \ref{Ni} for the case where $p=\infty,\,q=2,\,m=j$ and $1<i<j$, we have
\begin{equation}\label{a10}
\Bigl(\int |\!\bigtriangledown^{i}_\epsilon\mathcal
H_\epsilon|^{\frac{2j}{i}}\Bigr)^{\frac{i}{j}} \leq
C\parallel\!\mathcal
H_\epsilon\!\parallel_{C^0}^{2(1-\frac{i}{j})}\Bigl(\int|\!\bigtriangledown^{j}_\epsilon
\mathcal H_\epsilon|^2\Bigr)^{\frac{i}{j}}.
\end{equation}
We first estimate $M_{1^2}(k)$.  For its summand whose $i_3>0$, we use H\"older's inequality for $\frac{i_1}{k}+\frac{i_2}{k}+\frac{i_3}{k}=1$
and then use the above inequality for $j=k$ to find that  it is less than
\begin{eqnarray*}
\begin{aligned}
 &C\Bigl(\int|\!\bigtriangledown_\epsilon^{i_1}\mathcal
H_\epsilon|^{\frac{2k}{i_1}}\Bigr)^{\frac{i_1}{k}}\Bigl(\int|\!\bigtriangledown_\epsilon^{i_2}\mathcal
H_\epsilon|^{\frac{2k}{i_2}}\Bigr)^{\frac{i_2}{k}}\Bigl(\int|\!\bigtriangledown_\epsilon^{i_3}\mathcal
H_\epsilon|^{\frac{2k}{i_3}}\Bigr)^{\frac{i_3}{k}}
\leq C\parallel\!\mathcal
H_\epsilon\!\parallel_{C^0}^4\int|\!\bigtriangledown^k_\epsilon\mathcal
H_\epsilon|^2.
\end{aligned}
\end{eqnarray*}
For the same reason, a summand of $M_{1^2}(k)$ whose  $i_3=0$ is less
than
\begin{equation*}
C\parallel\!\mathcal
H_\epsilon\!\parallel_{C^0}^2\int|\!\bigtriangledown^k_\epsilon\mathcal
H_\epsilon|^2.
\end{equation*}
Hence, when $\epsilon>0$ is  small enough, $M_{1^2}(k)$ can
be controlled by $M(k)$:
\begin{equation}\label{20180402}
M_{1^2}(k)\leq C\parallel\!\mathcal H_\epsilon\!\parallel_{C^0}^2 M(k)
\end{equation}

Next we focus on  $M_1(k)$. By (\ref{20180202}) its summand whose  $j=1$ is less than
\begin{equation*}
C\epsilon^{-1}S(i_1,i_2,i_3),
\end{equation*}
where for convenience we have denoted
\begin{equation*}
S(i_1,i_2,i_3)=\int |\bigtriangledown_\epsilon^{i_1}\mathcal H_\epsilon|^2|\bigtriangledown_\epsilon^{i_2}\check H_\epsilon|^2|\bigtriangledown^{i_3}_\epsilon\check H_\epsilon|^2
\end{equation*}
where $i_1\geq 1$ and $i_1+i_2+i_3=k-1$.
Clearly $S(i_1,0,0)=M(k-1)$, $S(i_1, i_2,0)$ for $i_2>0$ or $S(i_1,i_2,i_3)$ for $i_3>0$
is a summand of $M_{1^2}(k-1)$ which by (\ref{20180402}) is less than
$C\parallel\!\mathcal H_\epsilon\!\parallel_{C^0}^2 M(k-1)$ or
$C\parallel\!\mathcal H_\epsilon\!\parallel_{C^0}^4M(k-1)$,
respectively, which is less than $CM(k-1)$ if $\epsilon>0$ is small enough.
Hence by inductive assumption (\ref{z0}) a summand of $M_1(k)$ whose $j=1$ is less than
\begin{equation*}
C\epsilon^{-1}\epsilon^{2l-5(k-1)}=C\epsilon^{2l-5k+4}\leq C\epsilon^{2l-5k}.
\end{equation*}
On the other hand, by H\"older's inequality a summand of $M_1(k)$ whose $j>1$ and $i_2=0$ (and also $i_3=0$)  is less than
\begin{equation}\label{20180501}
C\Bigl(\int|\bigtriangledown^{j}_\epsilon\log B|^{\frac{2(k-1)}{j-1}}\Bigr)^{\frac{j-1}{k-1}}
\Bigl(\int |\bigtriangledown_\epsilon^{i_1}\mathcal H_\epsilon|^{\frac{2(k-1)}{i_1}}\Bigr)^{\frac{i_1}{k-1}}
\end{equation}
since $i_1+j=k$. Hence by H\"older's inequality and (\ref{a10}) it is less than
\begin{equation*}
C\Bigl(\int|\bigtriangledown^{j}_\epsilon\log B|^{2(k-1)}\Bigr)^{\frac{1}{k-1}}
\parallel\!\mathcal H_\epsilon\!\parallel_{C^0}^{2(1-\frac{i_1}{k-1})}
\Bigl(\int |\bigtriangledown_\epsilon^{k-1}\mathcal H_\epsilon|^2\Bigr)^{\frac{i_1}{k-1}},
\end{equation*}
or by (\ref{20180403}) or (\ref{20180404}) is less than
\begin{equation}\label{20180905}
C\epsilon^{-5j+4}\parallel\!\mathcal H_\epsilon \!\parallel_{C^0}^{2(1-\frac{k-j}{k-1})}M(k-1)^{\frac{k-j}{k-1}},
\end{equation}
or by (\ref{01}) and inductive assumption (\ref{z0}) it is less than
 \begin{equation*}
C\epsilon^{-5j+4}\epsilon^{2l(1-\frac{k-j}{k-1})}\epsilon^{(2l-5(k-1))\frac{k-j}{k-1}}
=C\epsilon^{2l-5k+4}\leq C\epsilon^{2l-5k}.
\end{equation*}
By a similar method, we find that a summand of $M_1(k)$ whose $j>1$, $i_2>0$ and $i_3=0$ or $i_3>0$ can be controlled by (\ref{20180905}). In fact
it is less than
\begin{equation*}
C\epsilon^{-5j+4}\parallel\!\mathcal H_\epsilon \!\parallel_{C^0}^{2(a-\frac{k-j}{k-1})}M(k-1)^{\frac{k-j}{k-1}},
\end{equation*}
where $a=2$ if $i_3=0$ and $a=3$ if $i_3>0$.
From the above discussions, we see that
\begin{equation*}
M_1(k)\leq C\epsilon^{2l-5k}.
\end{equation*}

Thirdly we consider the terms in $M_0(k)$. For its term whose $i\geq 0,\,j\geq 0$ and $i+j=k-2$,  by inductive assumption (\ref{z0}) for $m=i$ and inequality (\ref{20180502}) we find that it is less than
\begin{equation*}
C\epsilon^{2l-5i}\epsilon^{-5j-6}=C\epsilon^{2l-5k+4}<C\epsilon^{2l-5k}.
\end{equation*}
By a similar method of estimate to a term in $M_1(k)$, we can prove that a term in $M_0(k)$ whose $i_1\geq 0,\,i_2\geq i_2\geq 0,\,j_1\geq j_2\geq 1$
is less than $C\epsilon^{2l-5k}$. We omit its proof here.
Therefore, we get the conclusion that
\begin{equation*}
M_0(k)\leq C\epsilon^{2l-5k}.
\end{equation*}

Now we arrive at the estimate to $M_{-1}(k)$. By (\ref{20180507}) the term in $M_{-1}(k)$ whose $i=0$ and $j=k-2$ is less than
$C\epsilon^{2l-3k+4}$. The term in $M_{-1}(k)$ whose $i>0$ and $j=k-i-2$ is less than
\begin{equation*}
C\epsilon^{2l-5i}\epsilon^{2l-3j-2}<C\epsilon^{4l-5k+2j+8}
\end{equation*}
by inductive assumption (\ref{z0}) for $m=i$ and (\ref{20180507}).
Hence we have
\begin{equation*}
M_{-1}(k)\leq C\epsilon^{2l-5k}.
\end{equation*}


In summary, we have proved that $M_{1^2}(k)$ can be controlled by $M(k)$, and  $M_1(k)$, $M_0(k)$ and $M_{-1}(k)$ are less than $C\epsilon^{2l-5k}$.
Hence from (\ref{20180508}) we obtain
\begin{equation*}
M(k)=\int|\!\bigtriangledown_\epsilon^k\mathcal H_\epsilon|^2\leq C\epsilon^{2l-5k}.
\end{equation*}
Thus, by the inductive method we have proved  inequality (\ref{z0}).

Now for a given positive integer $k$ and integer $l$ satisfying $l>\frac 5 2 (k+3)$,
when $\epsilon>0$ is small enough,
the Sobolev inequality and inequalities (\ref{z0}) for $0\leq m\leq k+3$
produce
\begin{equation*}
\parallel\!\mathcal H_\epsilon\!\parallel_{C_\epsilon^{k}(\mathcal U)}\leq C\sum_{m=0}^{k+3}
\parallel\!\mathcal \bigtriangledown^m_\epsilon\mathcal H_\epsilon\!\parallel_{L^2(\mathcal U)}\leq C\epsilon^{l-\frac  5 2(k+3)},
\end{equation*}
which by (\ref{20180509}) results in
\begin{equation*}
\parallel\!\mathcal H_\epsilon\!\parallel_{C^{k}(\mathcal U)}\leq C\epsilon^{l-3k-\frac {15}2 }.
\end{equation*}
Since $\mathcal H_\epsilon=\check H_{\epsilon}-I$, we have finished the proof of the theorem.
\end{proof}

\end{document}